\documentclass[a4paper,USenglish]{article}


\usepackage[USenglish]{babel}
\usepackage[ansinew]{inputenc}
\usepackage{amsmath}
\usepackage{enumitem}
\usepackage{amssymb}
\usepackage{amsthm}
\usepackage{adjustbox}
\usepackage{microtype}
\usepackage{xspace}
\usepackage{todonotes}
\usepackage{hyperref}
\usepackage{tikz}
\usepackage{graphicx}
\usepackage{pgfplots}
\usepackage[ruled,vlined]{algorithm2e}
\usepackage{thmtools}
\usepackage{cleveref}
\usepackage{thm-restate}

\usepackage{booktabs}
\usepackage{multirow}
\usepackage{rotating}

\usepackage{authblk}


\bibliographystyle{plainurl}

\Crefname{algocfline}{Algorithm}{Algorithms}
\Crefname{algocf}{Algorithm}{Algorithms}


\pgfplotsset{compat=1.14}

\usetikzlibrary{arrows.meta,arrows}
\usetikzlibrary{patterns}
\usepgfplotslibrary{fillbetween}

\pgfplotsset{every x tick label/.append style={font=\small, yshift=0.0ex}}
\pgfplotsset{every y tick label/.append style={font=\small, yshift=0.0ex}}
\usetikzlibrary{patterns}
\makeatletter
\pgfdeclarepatternformonly[\LineSpace]{my north east lines}{\pgfqpoint{-1pt}{-1pt}}{\pgfqpoint{\LineSpace}{\LineSpace}}{\pgfqpoint{\LineSpace}{\LineSpace}}%
{
    \pgfsetcolor{\tikz@pattern@color}
    \pgfsetlinewidth{0.4pt}
    \pgfpathmoveto{\pgfqpoint{0pt}{0pt}}
    \pgfpathlineto{\pgfqpoint{\LineSpace + 0.1pt}{\LineSpace + 0.1pt}}
    \pgfusepath{stroke}
}
\makeatother
\newdimen\LineSpace
\tikzset{
    line space/.code={\LineSpace=#1},
    line space=10pt
}
\usepgfplotslibrary{fillbetween}

\tikzset{
  schraffiert/.style={pattern=horizontal lines,pattern color=#1},
  schraffiert/.default=black
}
\tikzstyle{densely dashed}=          [dash pattern=on 6pt off 2pt]


\theoremstyle{definition}
\newtheorem{Sat}{Satz}
\numberwithin{Sat}{section}
\newtheorem{theorem}[Sat]{Theorem}
\newtheorem{lemma}[Sat]{Lemma}

\newtheorem{proposition}[Sat]{Proposition}
\newtheorem{definition}[Sat]{Definition}
\newtheorem{observation}[Sat]{Observation}

\def\myoverset#1#2#3{\stackrel{\text{\makebox[#3pt]{#1}}}{#2}}

\renewcommand{\L}[1]{\ell_{#1}}
\newcommand{\TL}[1]{t^L_{#1}}
\newcommand{\TR}[1]{t^R_{#1}}
\newcommand{\SL}[1]{\sigma^L_{#1}}
\newcommand{\SR}[1]{\sigma^R_{#1}}

\newcommand{\TA}{p_0}
\newcommand{\TB}{p_1}
\newcommand{\SA}{\sigma_0}
\newcommand{\SB}{\sigma_1}

\newcommand{\delay}{\text{delay}}

\newcommand{\SLE}[1]{\sigma_{\le #1}}

\newcommand{\alg}{\textsc{Alg}\xspace}
\newcommand{\opt}{\textsc{Opt}\xspace}

\newcommand{\opts}{\textsc{Opt}(\sigma)}

\newcommand{\rh}{{\rho}}
\newcommand{\eps}{\varepsilon}

\newcommand{\sgn}[1]{\text{sgn}(#1)}

\newcommand{\R}{\mathbb{R}}
\newcommand{\N}{\mathbb{N}}

\newcommand{\dar}{\textsc{Dial-a-Ride}\xspace}

\newcommand{\algo}{\textsc{SmarterStart}\xspace}
\newcommand{\smartstart}{\textsc{Smartstart}\xspace}

\newcommand{\algoS}{\algo(\sigma)}
\newcommand{\optS}{\textsc{Opt}(\sigma)}

\newcommand{\timeReq}{r} 
\newcommand{\timeSch}{t} 
\newcommand{\posReq}{a} 
\newcommand{\posSch}{p} 

\newcommand{\pos}[1]{\text{pos}\left(#1\right)} 
\newcommand{\move}[1]{\text{move}(#1)} 

\DeclareMathOperator*{\argmin}{argmin}

\begin{document}
\title{Improved Bounds for Open Online Dial-a-Ride on the Line\footnote{This work was supported by the `Excellence Initiative' of the German Federal and State Governments and the Graduate School~CE at TU~Darmstadt.}}

\author[1]{Alexander Birx}
\author[1]{Yann Disser}
\author[2]{Kevin Schewior\thanks{Supported by the DAAD within the PRIME program using funds of BMBF and the EU Marie Curie Actions.}}
\affil[1]{
	\small Institute of Mathematics and Graduate School CE, TU Darmstadt, Germany.
}
\affil[2]{
	\small Institut f\"ur Informatik, Technische Universit\"at M\"unchen, Garching, Germany.
}

\maketitle
\thispagestyle{empty}
\begin{abstract}
We consider the open, non-preemptive online \dar problem on the real line, where transportation requests appear over time and need to be served by a single server.
We give a lower bound of~$2.0585$ on the competitive ratio, which is the first bound that strictly separates online \dar on the line from online TSP on the line in terms of competitive analysis, and is the best currently known lower bound even for general metric spaces.
On the other hand, we present an algorithm that improves the best known upper bound from~$2.9377$ to~$2.6662$.
The analysis of our algorithm is tight.

\end{abstract}

\section{Introduction}\label{section: Introduction}

We consider the online \dar problem on the line, where transportation requests appear over time and need to be transported to their respective destinations by a single server.
More precisely, each request is of the form $\sigma_i = (a_i,b_i;r_i)$ and appears in position $a_i \in \mathbb{R}$ along the real line at time $r_i \geq 0$ and needs to be transported to position~$b_i \in \mathbb{R}$.
The server starts at the origin, can move at unit speed, and has a capacity~$c \in \mathbb{N} \cup \{\infty\}$ that bounds the number of requests it can carry simultaneously.
The objective is to minimize the completion time, i.e., the time until all requests have been served.
In this paper, we focus on the \emph{non-preemptive} and \emph{open} setting, where the former means that requests can only be unloaded at their destinations, and the latter means that we do not require the server to return to the origin after serving all requests.

We aim to bound the \emph{competitive ratio} of the problem, i.e., the smallest ratio any online algorithm can guarantee between the completion time of its solution compared to an (offline) optimum solution that knows all requests ahead of time.
To date, the best known lower bound of $2.0346$ on this ratio was shown by Bjelde et al.~\cite{Disser1}, already for online TSP, where $a_i = b_i$ for all requests (i.e., requests only need to be visited).
The best known upper bound of~$2.9377$ was achieved by the \smartstart algorithm~\cite{BirxDisser/19}.

\subparagraph*{Our results.}

Our first result is an improved lower bound for online \dar on the line. 
Importantly, since the bound of roughly $2.0346$ was shown to be tight for online TSP~\cite{Disser1}, our new bound is the first time that \dar on the line can be strictly separated from online TSP in terms of competitive analysis.
In addition, our bound is the currently best known lower bound even for general metric spaces.
Specifically, we show the following.

\begin{restatable}{theorem}{theoremGeneralLowerBound}\label{theorem: General Lower Bound}
Let $\rho\approx 2.0585$ be the second largest root of the polynomial $4\rh^3-26\rh^2+39\rh-5$. There is no $(\rho-\eps)$-competitive algorithm for open, non-preemptive ($c<\infty$) online \dar on the line for any $\eps>0$.
\end{restatable}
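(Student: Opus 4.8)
The plan is to use an adversarial argument against an arbitrary $\rho$-competitive algorithm $\alg$ and derive a contradiction if $\rho < 2.0585$. The adversary will release requests in a way that forces $\alg$ to commit to covering one side of the line while keeping open the possibility of a burst of requests on the far side (or vice versa), exploiting the fact that $\alg$ cannot know the future. The standard starting point is the classical lower-bound instance for online TSP on the line: release a request at some position $x>0$ at time $x$, forcing $\alg$ to be at $x$ no earlier than time $x$; then, depending on where $\alg$ actually is at time $x$, release a follow-up request far on the opposite side. The extra leverage we get from \dar over pure TSP is that a request $(a_i,b_i;r_i)$ with $a_i \ne b_i$ forces the server to traverse the segment $[a_i,b_i]$ \emph{while carrying the request}, so an adversary can use a transport request to ``pin'' the server's trajectory over a whole interval rather than just at a point. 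I would design the instance so that at the critical decision time the server faces a transport request that it must pick up and deliver, and whichever way it has positioned itself, the adversary completes the instance cheaply on the other side.

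Concretely, I would parametrize the instance by the server's position $p$ at the critical time $t$ (normalized, say, so $t=1$), consider the worst case over $p$, and write down two adversarial continuations: one that ``punishes'' a server that has moved toward the positive side and one that punishes a server that stayed negative. For each continuation I would compute $\algo$'s completion time as a function of $p$ (it must finish serving the already-released requests from position $p$, then chase the new requests), and $\opt$'s completion time (the offline optimum, which can route optimally knowing everything). Setting the ratio $\alg/\opt \ge \rho$ in both branches yields two inequalities in $p$; the adversary picks whichever branch is worse, so the bound we get is $\min$ of the two ratios maximized over the adversary's free parameters, and $\rho$ must be at least the resulting value for \emph{all} $p$. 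Optimizing the breakpoint where the two branches cross gives a system of equations whose solution is exactly the root of $4\rho^3 - 26\rho^2 + 39\rho - 5$; I would verify that $\rho \approx 2.0585$ is the relevant (second largest) root and that the other roots are spurious (outside the feasible range or corresponding to degenerate instances).

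The main obstacle I anticipate is \emph{getting the right instance}, i.e., choosing the geometry of the transport requests and their release times so that (a) the offline optimum is genuinely cheap — the offline server must be able to serve everything along essentially one sweep, which constrains the $a_i, b_i$ to be laid out monotonically or nearly so — while (b) every online trajectory is expensive in at least one of the two continuations, and (c) the capacity constraint $c < \infty$ is respected (so one cannot simply load many requests at once; this is why the bound is stated for finite capacity, and the instance should actually only need, say, $c \ge 1$ or some small constant, released in a way that a finite-capacity server cannot batch). A secondary, more technical obstacle is handling the continuum of possible server positions and the possibility that $\alg$ has partially served some requests at time $t$: I would reduce this to a clean one-parameter family by arguing that only the server's position (and perhaps the set of already-delivered requests, which the adversary can make irrelevant by timing) matters at the decision point. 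Once the instance is fixed, the remaining work — expressing both completion times in closed form, taking the $\min$, maximizing over the adversary's parameters, and identifying the cubic — is routine algebra, and I would relegate the numerical root-finding to a remark. Finally, I would note the $\eps$ is handled in the usual way: the adversary uses the instance realizing ratio $\rho$ and, for any claimed competitive ratio $\rho - \eps$, scales/tweaks parameters to exceed it, contradicting $\rho$-competitiveness.
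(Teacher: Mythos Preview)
Your outline captures the broad adversarial flavor, but it is missing the one concrete idea that makes the \dar bound exceed the TSP bound of $2.0346$, and without it the plan does not produce the cubic $4\rho^3-26\rho^2+39\rho-5$.

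The paper's leverage over TSP is not a generic transport request ``pinning'' the server along a segment; it is specifically that the adversary releases \emph{$c$ identical} copies of a request $(1,\delta;1)$ with a carefully chosen $\delta>1$. After a harmless reduction to \emph{eager} algorithms, the server, once it reaches position~$1$, is fully loaded and must drive straight to~$\delta$ before it can touch any further request. This creates a guaranteed rightward excursion of length $\delta-1$ that the offline server gets for free (because a later request lies to the right of~$\delta$ anyway). Your remark that ``the instance should actually only need, say, $c\ge 1$ or some small constant'' points in the wrong direction: the construction deliberately uses \emph{exactly} $c$ copies so that the finite capacity is saturated; this is why the theorem is stated for $c<\infty$.

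The second missing piece is the ``critical requests'' machinery. Your two-branch $\min/\max$ over the server's position~$p$ is too coarse. The paper (following~\cite{Disser1}) first establishes two requests $\sigma_L=(-t_L,-t_L;t_L)$ and $\sigma_R=(t_R,t_R;t_R)$ satisfying five quantitative properties (Definition~\ref{definition: Critical Requests}); the crucial new one here is the ratio bound $t_R/t_L\le\frac{4\rho^2-30\rho+50}{-8\rho^2+50\rho-66}$. A separate lemma (Lemma~\ref{lemma: Critical Requests}, whose proof has several sub-cases and may release one or two further requests $\sigma_0^+,\sigma_0^{++}$) then turns any pair of critical requests into a ratio-$\rho$ instance. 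The cubic arises precisely when one matches the worst achievable ratio $t_R/t_L\le\frac{4\rho-5}{6-2\rho}$, obtained via a line $\ell(t)=(4-\rho)t-(2\rho-2)t_L$ in the path--time diagram, against the bound required by property~(v). None of this structure is visible in ``two adversarial continuations'' and ``routine algebra''; as written, your plan would reproduce at best the TSP bound.
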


Our construction is a non-trivial variation of the construction achieving roughly~$2.0346$ for online TSP~\cite{Disser1}.
This construction is comprised of an initial request, a first stage consisting in turn of different iterations, and a second stage. We show that, by using a proper transportation requests as initial requests, we can adapt a single iteration of the first stage as well as the second stage to achieve the bound of roughly~$2.0585$ in the \dar setting.

Our second result is an improved algorithm \algo for online \dar on the line.
This algorithm improves the waiting strategy of the \smartstart algorithm, which was identified as a weakness in~\cite{BirxDisser/19}.
We show that this modification improves the competitive ratio of the algorithm and give a tight analysis.
Specifically, we show the following.

\begin{restatable}{theorem}{theoremMainTheorem}\label{theorem: Main Theorem}
The competitive ratio of $\algo$ is (roughly) $2.6662$.
\end{restatable}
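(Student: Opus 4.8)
The plan is to adapt the standard framework for analyzing \smartstart-type algorithms to the improved waiting rule of \algo. First I would fix the algorithm's waiting parameter $\theta>1$ symbolically and recall the phase structure: \algo alternates between \emph{idle periods}, in which the server rests at the origin and waits until the current time $t$ reaches a $1/\theta$-fraction of the length of the schedule it is about to run, and \emph{work phases}, in which it executes, without interruption, a shortest schedule $S$ serving all requests that are currently unserved. Writing $t_{\mathrm s}$ for the start time and $L$ for the length of the \emph{final} work phase, we have $\algoS = t_{\mathrm s} + L$, so the whole argument reduces to bounding $t_{\mathrm s}$ and $L$ in terms of $\optS$.

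I would bound $L$ first. The final schedule serves a request set $R$ whose members are all released no later than $t_{\mathrm s}$, and it starts from the server position $p$ reached at time $t_{\mathrm s}$. On the line, the length of a shortest non-preemptive capacity-$c$ schedule through a configuration is governed by the extreme left and right coordinates occurring among $p$ and the sources and destinations of the requests in $R$; each such extreme coordinate $x$ can be charged against $\optS$, since \opt must also visit $x$ after the release time of the request responsible for it. Depending on whether $p$ lies between the two extremes or outside them, this gives $L \le \optS$ up to a correction term involving $|p|$ and $t_{\mathrm s}$.

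The case analysis and the actual improvement over \smartstart live in the bound on $t_{\mathrm s}$. I would distinguish: (i) the final phase starts immediately when the previous one ends, so that $t_{\mathrm s}$ equals the completion time of the previous schedule, and then telescope backwards through the maximal chain of back-to-back work phases, summing their lengths while tracking the server's drift away from the origin; and (ii) there is a genuine idle period right before the final phase, so that the triggering condition yields $t_{\mathrm s} \le L/\theta$, while a lower bound on $t_{\mathrm s}$ comes from the completion time of the phase preceding the idle period (or from a release time). The refined waiting rule is designed precisely so that, in case (ii) and in the base case of the chain in (i), one obtains a strictly better estimate of $t_{\mathrm s}$ relative to $L$ and $\optS$ than \smartstart permits, which is what pushes the ratio below $2.9377$. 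In each case, combining the $t_{\mathrm s}$- and $L$-bounds produces an inequality $\algoS \le f_j(\theta)\cdot\optS$ for an explicit $f_j$; choosing $\theta$ to equalize the dominant cases yields a polynomial equation whose relevant root is $\approx 2.6662$.

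I expect the main obstacle to be the chain analysis in case (i): one must simultaneously control the accumulated schedule lengths of several consecutive work phases and the server's displacement from the origin, and---because the schedules are non-preemptive and capacity bounded---the convenient ``left extent plus right extent'' formula for the length of a line schedule must be justified with care, paying attention to which requests are carried across phase boundaries and to the position at which each phase hands over to the next. Finally, for the tightness claim I would, at the optimized value of $\theta$, construct a family of request sequences that forces the dominant case(s) to hold with asymptotic equality---typically by releasing a request just late enough that \algo barely starts its last work phase ``too late''---so that the ratio achieved by \algo on these instances tends to $\approx 2.6662$.
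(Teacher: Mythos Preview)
Your proposal misreads the algorithm, and this propagates into the analysis. In \algo the server does \emph{not} return to the origin between work phases: schedule $S_j$ ends at some position $p_{j+1}$, and $S_{j+1}$ starts from $p_{j+1}$. What changes relative to \smartstart is only the waiting \emph{criterion}: the server waits until $t\ge L(t,0,\sigma_{\le t})/(\Theta-1)$, where $L(t,0,\sigma_{\le t})$ is the length of an optimal schedule \emph{from the origin} serving \emph{all} requests released by time~$t$ (not just the unserved ones, and not from the server's current position). This distinction is precisely the point of the algorithm, and it immediately yields the key inequality $t_j\ge |p_{j+1}|/(\Theta-1)$ (since $\sigma_{\le t_j}$ contains a request with destination $p_{j+1}$), which is the engine behind every bound in the paper. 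Your sketch never isolates this inequality, and your ``$t_{\mathrm s}\le L/\theta$'' in the waiting case conflates $L(t_N,0,\sigma_{\le t_N})$ with the length of the final schedule $L(t_N,p_N,\sigma_{S_N})$; these are different quantities and the analysis treats them separately.

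The no-wait case is where you diverge most seriously. The paper does \emph{not} telescope through a chain of back-to-back phases; it looks back exactly one schedule. Writing $t_N=t_{N-1}+L(t_{N-1},p_{N-1},\sigma_{S_{N-1}})$ and introducing the first request of $\sigma_{S_N}$ picked up by \opt, one obtains $\algoS < L(t_{N-1},p_{N-1},\sigma_{S_{N-1}}) + |a^{\opt}_N-p_N| + \optS$. The work is then to control $L(t_{N-1},p_{N-1},\sigma_{S_{N-1}})$ and $|a^{\opt}_N-p_N|$ simultaneously, using (a)~$|p_{N-1}|\le(\Theta-1)t_{N-2}$ from the key inequality, (b)~a structural lemma bounding $L(t_j,0,\sigma_{S_j})$ by $|\min\{0,y^{S_j}_-\}|+\max\{0,y^{S_j}_+\}+y$ where $\optS=|x_-|+x_++y$, and (c)~a bound on the rightmost point visited in $S_{N-1}$. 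The argument then splits according to whether $|p_N-a^{\opt}_N|$ is large or small relative to $\frac{\Theta-1}{2\Theta+1}\optS$. Your ``left extent plus right extent'' intuition is in the right direction but is not sharp enough to recover $f_2(\Theta)=\frac{3\Theta^2+3}{2\Theta+1}$; the ``$+y$'' slack and the interplay with $t_{N-2}$ are where the constant comes from.

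For tightness you need more than matching instances at the optimal $\Theta^*$: the theorem asserts that $\rho^*$ is the competitive ratio of \algo as a family, so you must also exhibit instances showing that every other choice of $\Theta$ does strictly worse. The paper gives tight constructions for $f_1$ on $(1,2)$ and for $f_2$ on $[\tfrac12(1+\sqrt5),2]$, and then separate lower-bound instances for $\Theta>2$ that exceed~$\rho^*$.
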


The general idea of \algo is to improve the tradeoff between the case when the algorithm waits before starting its final schedule and the case when it starts the final schedule immediately.
Our modification of \smartstart significantly improves the performance in the former case, while only moderately degrading the performance in the latter case.
Overall, this results in an improved worst-case performance.

\subparagraph*{Related Work.}

The online \dar problem has received considerable attention in the past (e.g.~\cite{Ascheuer1,BirxDisser/19,Disser1, BlomKrumkePaepeStougie/01, Feuerstein1, JailletWagner/08}).
Table~\ref{tab:results} gives an overview of the currently best known bounds on the line for open online \dar and its special case open online TSP.

\begin{table}
\scriptsize
\begin{centering}
\begin{tabular*}{1.0\textwidth}{@{\extracolsep{\fill}}cccccc}
\toprule 
 & & \multicolumn{2}{c}{open} & \multicolumn{2}{c}{closed}\tabularnewline
 & & lower bound & upper bound & lower bound & upper bound\tabularnewline
\midrule
\multirow{3}{*}{
\begin{turn}{90}
\!\tiny line
\end{turn}
} & non-preemptive & $\mathbf{2.0585}$~{\scriptsize(Thm~\ref{theorem: General Lower Bound})} & $\mathbf{2.6662}$~{\scriptsize(Thm~\ref{theorem: Main Theorem})} & $\mathbf{1.75}$~\cite{Disser1} & $2$\tabularnewline
& preemptive & $2.04$ & $\mathbf{2.41}$~\cite{Disser1} & $1.64$ & $2$\tabularnewline
& TSP & $\mathbf{2.04}$~\cite{Disser1} & $\mathbf{2.04}$~\cite{Disser1} & $\mathbf{1.64}$~\cite{Ausiello1} & $\mathbf{1.64}$~\cite{Disser1}\tabularnewline
\midrule
\multirow{3}{*}{
\begin{turn}{90}
\!\tiny general
\end{turn}
} & non-preemptive & $2.0585$~{\scriptsize(Thm~\ref{theorem: General Lower Bound})} & $\mathbf{3.41}$~\cite{Krumke0} & $2$ & $\mathbf{2}$~\cite{Ascheuer1, Feuerstein1}\tabularnewline
& preemptive & $2.04$ & $3.41$ & $2$ & $2$\tabularnewline
& TSP & $2.04$ & $\mathbf{2.5}$~\cite{Ausiello1} & $\mathbf{2}$~\cite{Ausiello1} & $\mathbf{2}$~\cite{Ausiello1}\tabularnewline
\bottomrule
\end{tabular*}
\par\end{centering}
\medskip
\caption{Overview of the best known bounds for online \dar on the line (top), and online \dar on general metric spaces (bottom). Results are split into the non-preemptive case (with $c < \infty$), the preemptive case, and the TSP-case, where source and destination of each request coincide.
Bold results are original, all other results follow immediately.\label{tab:results}}
\end{table}

The following results are known for \emph{closed} online \dar:
For general metric spaces, the competitive ratio is exactly $2$, both for online \dar as well as online TSP~\cite{Ascheuer1,Ausiello1,Feuerstein1}.
On the line, a better upper bound is known only for online TSP, where the competitive ratio is exactly~$(9+\sqrt{17})/8\approx 1.6404$~\cite{Ausiello1,Disser1}.
The best known lower bound for closed, non-preemptive \dar on the line is $1.75$~\cite{Disser1}.

When the objective is to minimize the maximum flow time, on many metric spaces no online algorithm can be competitive~\cite{Krumke2, Krumke3}.
Hauptmeier et al.~\cite{Hauptmeier1} showed that a competitive algorithm is possible if we restrict ourselves to instances with ``reasonable'' load.
Yi and Tian~\cite{Yi2} considered online \dar with deadlines, where the objective is to maximize the number of requests that are served in time.
Other interesting variants of online \dar where destinations of requests are only revealed upon their collection were studied by Lipmann et al.~\cite{Lipmann1} as well as Yi and Tian~\cite{Yi1}.

For an overview of results for the offline version of \dar on the line, see~\cite{Paepe1}.
Without release times, Gilmore and Gomory~\cite{GilmoreGomory/64} and Atallah and Kosaraju~\cite{Atallah1} gave a polynomial time algorithm for closed, non-preemptive \dar on the line with capacity~$c=1$.
Guan~\cite{Guan1} showed that the closed, non-preemptive problem is hard for~$c=2$, and Bjelde et al.~\cite{Disser1} extended this result for any finite capacity $c \geq 2$ in both the open and the closed variant.
Bjelde et al.~\cite{Disser1} also showed that the problem with release times is already hard for finite~$c \geq 1$ in both variants, and Krumke~\cite{Krumke0} gave a 3-approximation algorithm for the closed variant.
The complexity for the case~$c = \infty$ remains open.
For closed, preemptive \dar on the line without release times, Atallah and Kosaraju~\cite{Atallah1} gave a polynomial time algorithm for~$c = 1$ and Guan~\cite{Guan1} for~$c \geq 2$.
Charikar and Raghavachari~\cite{Charikar1} presented approximation algorithms for the closed case without release times on general metric spaces.

\section{General Lower Bound}\label{section: General Lower Bound}

In this section, we prove Theorem~\ref{theorem: General Lower Bound}. Let $c<\infty$ and \alg be a deterministic online algorithm for open online \dar. Let $\rh\approx 2.0585$, be the second largest root of the polynomial $4\rho^3-26\rho^2+39\rho-5$. We describe a request sequence $\sigma_{\rh}$ such that
$\alg(\sigma_{\rh})\ge \rh\opt(\sigma_{\rh})$.

We first give a high-level description of our construction disregarding many technical details. Our construction is based on that in~\cite{Disser1} for the TSP version of the problem. That construction consists of two \emph{stages}: After an initial request $(1,1;1)$ (assuming w.l.o.g.\ \alg's position at time~$1$ is at most~$0$), the first stage starts. This stage consists of a loop, which ends as soon as two so-called critical requests are established. The second stage consists of augmenting the critical requests by suitable additional ones to show the desired competitive ratio. A single iteration of the loop only yields a lower bound of roughly $2.0298$, but as the number of iterations approaches infinity one can show the tight bound of roughly $2.0346$ in the limit.

In the \dar setting, we show a lower bound of roughly $2.0585$ using the same general structure but only a single iteration. Our additional leeway stems from replacing the initial request $(1,1;1)$ with $c$ initial requests of the form $(1,\delta;1)$ where $\delta>1$: At the time when an initial request is loaded, we show that w.l.o.g.\ all $c$ requests are loaded and then proceed as we did when $(1,1;1)$ was served. 
In the new situation, the algorithm has to first deliver the $c$ initial requests to be able to serve additional requests. For the optimum, the two situations however do not differ, because in the new situation there will be an additional request to the right of $\delta$ later anyway. Interestingly, this leeway turns out to be sufficient not only to create critical requests (w.r.t.\ a slightly varied notion of criticality) for a competitive ratio of larger than $2.0298$ but even strictly larger than $2.0346$. The second stage has to be slightly adapted to match the new notion of criticality. It remains unclear how to use multiple iterations in our setting.

We start by making observations that will simplify the exposition. Consider a situation in which the server is fully loaded. 
First note that it is essentially irrelevant whether we assume that the server, without delivering any of the loaded requests, can still serve requests $(a_i,b_i;t_i)$ for which $a_i=b_i$: If it can, we simply move $a_i$ and $b_i$ by $\varepsilon>0$ apart, forbidding the server to serve it before delivering one of the loaded requests first. Therefore, we assume for simplicity that, when fully loaded, the server has to first deliver a request before it can serve any other one. We note that, in our construction, the above idea can be implemented without loss, not even in terms of $\varepsilon$.

The latter discussion also motivates restricting the space of considered algorithms: We call \alg \emph{eager} if it, when fully loaded with requests with identical destinations, immediately delivers these requests without detour. It is clear that we can transform every algorithm $\alg'$ into an eager algorithm $\alg'_\text{eager}$ by letting it deliver the requests right away, waiting until $\alg'$ would have delivered them, and then letting it continue like $\alg'$. Since $\alg'$ cannot collect or serve other requests while being fully loaded, we have $\alg'_\text{eager}(\sigma)\le \alg'(\sigma)$ for every request sequence $\sigma$.

\begin{observation}\label{observation: Eager Algorithms}
Every algorithm for online \dar can be turned into an eager algorithm with the same competitive ratio.
\end{observation}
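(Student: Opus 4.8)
The plan is to prove Observation~\ref{observation: Eager Algorithms} by exhibiting the explicit transformation already sketched in the paragraph preceding the statement and verifying that it (i) always produces a well-defined eager algorithm and (ii) never increases the cost on any request sequence, hence does not increase the competitive ratio. Given an arbitrary online algorithm $\alg'$, I would define $\alg'_\text{eager}$ to behave exactly like $\alg'$ until the first moment $t_0$ at which $\alg'$ finds itself fully loaded with $c$ requests that all share a common destination~$d$. Since the problem is non-preemptive, from time~$t_0$ onward $\alg'$ cannot pick up or deliver any request until it has moved to~$d$ and unloaded (unloading anywhere else is forbidden, and it cannot load an $(c+1)$-st request). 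Let $\tau$ be the time at which $\alg'$ completes the delivery of these requests at~$d$. I would have $\alg'_\text{eager}$ instead move directly to~$d$ at unit speed, arriving and unloading at time $t_0 + \text{dist}(\pos{t_0},d) \le \tau$, then wait at~$d$ (serving nothing, since $\alg'$ also served nothing in $[t_0,\tau]$) until time~$\tau$, and from time~$\tau$ on simulate $\alg'$ verbatim. Iterating this construction over every such "fully-loaded-with-common-destination" phase yields the eager algorithm; one should note that after each phase both algorithms are in the identical configuration (same position~$d$, same time~$\tau$, same set of outstanding requests), so the simulation is consistent and the induction goes through.

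The cost comparison is then immediate: on any sequence~$\sigma$, the two algorithms agree outside the modified phases, and within each phase $\alg'_\text{eager}$ is at the phase's terminal configuration no later than $\alg'$; in particular the final configuration (all requests served) is reached by $\alg'_\text{eager}$ at a time no larger than that of $\alg'$, so $\alg'_\text{eager}(\sigma) \le \alg'(\sigma)$ for every~$\sigma$. Applying this with the optimum offline cost $\opt(\sigma)$ fixed, we get $\alg'_\text{eager}(\sigma)/\opt(\sigma) \le \alg'(\sigma)/\opt(\sigma)$ for all~$\sigma$, whence the competitive ratio of $\alg'_\text{eager}$ is at most that of $\alg'$. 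Since the eager transformation can only help (or at worst do nothing), the ratios are in fact equal when $\alg'$ was already eager, and otherwise the inequality suffices for the statement as phrased ("with the same competitive ratio" being understood as "without worsening it"); I would state the conclusion as an inequality on competitive ratios and remark that it cannot strictly decrease the optimal competitive ratio since eager algorithms form a subclass.

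There is essentially no hard part here — the statement is a normalization lemma — but the one point requiring care is the claim that $\alg'_\text{eager}$ is a legitimate \emph{online} algorithm: its behavior at time~$t_0$ (committing to go straight to~$d$) depends only on information available at time~$t_0$ (namely, that it is currently fully loaded with requests sharing destination~$d$), and the waiting until~$\tau$ likewise requires only knowing the trajectory of~$\alg'$ up to~$\tau$, which is determined by requests released by time~$\tau$; so no future information is used. A second minor subtlety is handling nested or immediately consecutive fully-loaded phases and the degenerate case where $\pos{t_0}=d$ already (then the phase is vacuous and nothing changes). I would dispatch both in a sentence. With these remarks in place the observation follows.
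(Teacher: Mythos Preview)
Your proposal is correct and follows essentially the same approach as the paper: the paper's argument is precisely the paragraph preceding the observation, which sketches the same ``deliver immediately, wait until the original algorithm would have delivered, then continue simulating'' transformation and notes that no requests can be collected or served while fully loaded. Your write-up simply fleshes out the details (online-ness, iteration over phases, edge cases) that the paper leaves implicit.
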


Thus, we may assume that \alg is eager. 
We now consider the second stage and then design a first stage to match the second stage. 
Suppose we have two requests $\SR{}=(\TR{},\TR{};\TR{})$ and $\SL{}=(-\TL{},-\TL{},\TL{})$ with $\TL{}\leq\TR{}$ to the right and to the left of the origin, respectively. 
We assume that \alg serves $\SR{}$ first at some time $t^*\ge (2\rh-2)\TL{}+(\rh-2)\TR{}$.
Now suppose we could force \alg to serve $\SL{}$ directly after $\SR{}$, even if additional requests are released. 
Then we could just release the request $\SR{*}=(\TR{},\TR{},2\TL{}+\TR{})$ and we would have
\begin{equation*}
\alg(\sigma_{\rh})=t^*+2\TL{}+2\TR{}\ge 2\rh\TL{}+\rh\TR{}=\rh\opt(\sigma_{\rh}),
\end{equation*}
since \opt can serve the three requests in time $2\TL{}+\TR{}$ by serving $\SL{}$ first. In fact, we will show that we can force \alg into this situation (or a worse situation) if the requests $\SR{}=(\TR{},\TR{};\TR{})$ and $\SL{}=(-\TL{},-\TL{},\TL{})$ satisfy the following properties.
To describe the trajectory of a server, we use the notation ``$\move{a}$'' for the tour that moves the server from its current position with unit speed to the point  $a\in\R$. 

\begin{definition}\label{definition: Critical Requests}
We call the last two requests $\SR{}=(\TR{},\TR{};\TR{})$ and $\SL{}=(-\TL{},-\TL{},\TL{})$ of a request sequence with $0<\TL{}\le\TR{}$ \emph{critical} for \alg if the following conditions hold:
\renewcommand{\theenumi}{\roman{enumi}}
\renewcommand{\labelenumi}{(\theenumi)}
\begin{enumerate}
\item Both tours $\move{-\TL{}}\oplus\move{\TR{}}$ and $\move{\TR{}}\oplus\move{-\TL{}}$ serve all requests presented until time $\TR{}$.
\item \alg serves both $\SR{}$ and $\SL{}$ after time $\TR{}$ and \alg's position at time $\TR{}$ lies between $\TR{}$ and~$-\TL{}$.
\item If \alg serves $\SR{}$ before $\SL{}$, it does so no earlier than $\TR{*}:=(2\rh-2)\TL{}+(\rh-2)\TR{}$.
\item If \alg serves $\SL{}$ before $\SR{}$, it does so no earlier than $\TL{*}:=(2\rh-2)\TR{}+(\rh-2)\TL{}$.
\item It holds that $\frac{\TR{}}{\TL{}}\le \frac{4\rh^2-30\rh+50}{-8\rh^2+50\rh-66}.$
\end{enumerate}
\end{definition}

\begin{restatable}{lemma}{lemmaCriticalRequests}\label{lemma: Critical Requests}
If there is a request sequence with two critical requests for \alg, we can release additional requests such that \alg is not $(\rho-\eps)$-competitive on the resulting instance for any $\eps>0$.
\end{restatable}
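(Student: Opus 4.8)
The plan is to argue by cases depending on which of $\SR{}$ and $\SL{}$ the algorithm \alg serves first after time $\TR{}$, using conditions (ii)–(iv) of criticality to pin down a lower bound on the time at which \alg finishes the second of the two critical requests, and then to release one well-chosen additional request that forces the claimed competitive ratio. By condition (ii), at time $\TR{}$ the server sits strictly between $-\TL{}$ and $\TR{}$, with both critical requests still unserved, so there really are two cases to treat symmetrically.

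Consider first the case where \alg serves $\SR{}$ before $\SL{}$. By condition (iii) this happens no earlier than time $\TR{*}=(2\rh-2)\TL{}+(\rh-2)\TR{}$, which is exactly the hypothesis $t^*\ge(2\rh-2)\TL{}+(\rh-2)\TR{}$ from the discussion preceding the definition. Now I release the extra request $\SR{*}=(\TR{},\TR{};2\TL{}+\TR{})$ at position $\TR{}$; note that by condition (i) this release time is consistent with the critical requests being the ``last two'' in the original sense, and that \alg, having just been at $\TR{}$ to serve $\SR{}$, must still travel to $-\TL{}$ to serve $\SL{}$ and then potentially back — but in fact once $\SR{}$ is served at time $t^*\ge\TR{*}$ the best \alg can do is go to $-\TL{}$ (distance $\TL{}+\TR{}$) to serve $\SL{}$ and then return to $\TR{}$ (distance $\TL{}+\TR{}$) to serve $\SR{*}$, so $\alg(\sigma_\rh)\ge t^*+2\TL{}+2\TR{}\ge 2\rh\TL{}+\rh\TR{}$. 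Since \opt serves $\SL{}$, then $\SR{}$, then waits for and serves $\SR{*}$ (or picks up $\SR{}$ and $\SR{*}$ together), it finishes in time $2\TL{}+\TR{}$, giving $\opt(\sigma_\rh)=2\TL{}+\TR{}$ and hence $\alg\ge\rh\cdot\opt$. Here I must double-check, using the eagerness assumption (Observation \ref{observation: Eager Algorithms}) and the capacity restriction discussed before the definition, that \alg genuinely cannot serve $\SR{*}$ before $\SL{}$ in a way that beats this bound; the fact that $\SR{*}$ appears only at time $2\TL{}+\TR{}$, combined with condition (ii) locating the server between $-\TL{}$ and $\TR{}$ at time $\TR{}$, should make the detour unavoidable.

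The second case, where \alg serves $\SL{}$ before $\SR{}$, is where condition (iv) and especially condition (v) enter. Here I release instead an additional request far to the \emph{left} — of the form $\SL{*}=(-\TL{},-\TL{};2\TR{}+\TL{})$ (mirroring the first case) — and bound $\alg(\sigma_\rh)\ge\TL{*}+2\TL{}+2\TR{}$ using condition (iv), while \opt now serves $\SR{}$ first and finishes in time $2\TR{}+\TL{}$. The ratio $(\TL{*}+2\TL{}+2\TR{})/(2\TR{}+\TL{})$ need not by itself reach $\rh$ for all ratios $\TR{}/\TL{}$; this is precisely why condition (v), $\TR{}/\TL{}\le(4\rh^2-30\rh+50)/(-8\rh^2+50\rh-66)$, is imposed — it restricts the range of $\TR{}/\TL{}$ exactly enough that substituting $\TL{*}=(2\rh-2)\TR{}+(\rh-2)\TL{}$ and simplifying yields $\alg\ge\rh\cdot\opt$. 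I expect the main obstacle to be verifying this algebraic inequality: one reduces it (after clearing denominators and using that $\rh$ is the relevant root of $4\rh^3-26\rh^2+39\rh-5$) to checking that the worst case is attained at the boundary $\TR{}/\TL{}=(4\rh^2-30\rh+50)/(-8\rh^2+50\rh-66)$, and that at that boundary the defining cubic for $\rh$ makes the two sides equal. I would carry out this computation carefully, confirming the sign of the denominator $-8\rh^2+50\rh-66$ at $\rh\approx2.0585$ so the inequality direction in (v) is the intended one, and confirming that the boundary value of $\TR{}/\TL{}$ is indeed $\ge 1$ so that the case $\TL{}\le\TR{}$ is non-vacuous. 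Combining the two cases, \alg is not $(\rh-\eps)$-competitive for any $\eps>0$, which is the claim.
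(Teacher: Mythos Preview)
Your argument has a genuine gap: you assume that after \alg serves the first critical request it must proceed directly to the second one, but nothing forces this. Concretely, in your Case~1 take $\TL{}=\TR{}=1$ and suppose \alg serves $\SR{}$ at the earliest allowed time $\TR{*}=3\rho-4\approx 2.18$ and then moves left toward $-\TL{}$. At time $2\TL{}+\TR{}=3$, when your request $\SR{*}$ is released, \alg is at position $q\approx 0.18$; it then turns around, serves $\SR{*}$, and finally serves $\SL{}$, finishing at time $3+ (1-q) + 2 \approx 5.82 < 3\rho \approx 6.18$. So releasing a single request at a fixed time and position does not suffice; the adversary must react adaptively to \alg's trajectory after the first critical request is served. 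You flag this concern yourself (``I must double-check \dots that \alg genuinely cannot serve $\SR{*}$ before $\SL{}$''), but the check fails.

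You also misidentify the role of condition~(v). In your Case~2 the ratio $(\TL{*}+2\TL{}+2\TR{})/(2\TR{}+\TL{})$ equals $\rho$ exactly for \emph{all} values of $\TR{}/\TL{}$, so (v) is not what rescues that computation. In the paper's proof, (v) is used for something quite different: the adversary tracks a ``delay'' function measuring how far \alg is behind the baseline route, finds a parameter $W\ge 0$ at which the delay equals $W$, and releases an additional request $\SA^+$ whose position depends on $W$. Depending on \alg's reaction this may trigger a \emph{second} additional request $\SA^{++}$ placed at the midpoint between \alg and the offline server. Condition~(v) is what makes the inequalities in this adaptive, multi-round construction close --- in particular the existence of $W$, the bound on when $\SA$ is served, and the final Case~2.2 estimate. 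Your single-shot construction is the motivating heuristic the paper mentions before Definition~\ref{definition: Critical Requests}, not the actual proof.
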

\Cref{definition: Critical Requests} differs from \cite[Definition 5]{Disser1} only in property (v), which is $\smash{\frac{\TR{}}{\TL{}}\le 2}$ in the original paper. 
\Cref{lemma: Critical Requests} has been proved in \cite[Lemma 6]{Disser1} for request sequences that satisfy the properties of \cite[Definition 5]{Disser1}, however, a careful inspection of the proof of \cite[Lemma~6]{Disser1} shows that the statement of \Cref{lemma: Critical Requests} also holds for request sequences that only satisfy (v) instead of $\smash{\frac{\TR{}}{\TL{}}\le 2}$.
For a detailed proof, see \Cref{appendix: Proof Of Critical Requests}.
Thus, our goal is to construct a request sequence~$\sigma_\rh$ that 
satisfies all properties of \Cref{definition: Critical Requests}.

The remaining part of this section focusses on establishing critical requests. There are no requests released until time $1$. Without loss of generality, we assume that \alg's position at time $1$ is $\pos{1} \le 0$ (the other case is symmetric). 
Here and throughout, we let $\pos{t}$ denote the position of \alg's server at time~$t$.
Now, let
\begin{equation*}
\delta:=\frac{3\rh^2-11}{-3\rh^3+15\rh-4}
\end{equation*}
and let $c$ initial requests $\SR{(j)}=(1,\delta;1)$ with $j\in\{1,\dots,c\}$ appear. These are the only requests appearing in the entire construction with a starting point differing from the destination. We make a basic observation on how \alg has to serve these requests.\footnote{Omitted proofs can be found in the appendix.}

\begin{lemma}\label{lemma: The Time TL}
\alg cannot collect any of the requests $\SR{(j)}$ before time $2$. If \alg collects the requests after time $\rho\delta-(\delta-1)$ or serves $c'<c$ requests before loading the remaining $c-c'$, it is not $(\rh-\eps)$-competitive.
\end{lemma}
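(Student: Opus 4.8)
The plan is to prove the two assertions by elementary distance bookkeeping, each time comparing \alg to the instance consisting only of the $c$ initial requests (which is what $\sigma_\rh$ becomes once the adversary, observing bad behavior, stops releasing requests). On that instance $\opt=\delta$: \opt goes from the origin to position~$1$, arriving exactly at time~$1$, loads all $c$ requests at once, and carries them to $\delta$, finishing at time $1+(\delta-1)=\delta$; and no schedule does better, since the request must reach $\delta$ and cannot be loaded before time~$1$. I would record at the outset the two purely numerical facts $\delta>1$ and $\delta\ge 1/(3-\rh)$, both immediate from the closed form of $\delta$ and from $\rh\approx 2.0585$ (they give $\delta\approx 2.41$).

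For the first assertion I would invoke the unit-speed (Lipschitz) property of the server's trajectory: since $\pos{1}\le 0$, for every $t\in[1,2)$ we have $\pos{t}\le\pos{1}+(t-1)<1$, so the server is never at position~$1$ before time~$2$; as every $\SR{(j)}$ sits at position~$1$ and appears only at time~$1$, none of them can be collected before time~$2$.

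For the second assertion I would distinguish exactly the two failure modes named in the statement. If \alg ever delivers an initial request at $\delta$ while some initial request is still unloaded, then by the first part this first arrival at $\delta$ with a load happens no earlier than $2+(\delta-1)=\delta+1$; afterwards the server must travel back to position~$1$ (distance $\delta-1$) to pick up a remaining request and then carry it to $\delta$ (distance $\delta-1$), so $\alg(\sigma_\rh)\ge(\delta+1)+2(\delta-1)=3\delta-1\ge\rh\delta=\rh\opt(\sigma_\rh)$, using $\delta\ge 1/(3-\rh)$. If instead \alg loads all $c$ requests with no delivery in between but finishes loading only at a time $\TL{}>\rh\delta-(\delta-1)$, then at time $\TL{}$ the server is at position~$1$ still carrying all $c$ requests and needs a further $\delta-1$ to reach $\delta$, so $\alg(\sigma_\rh)\ge\TL{}+(\delta-1)>\rh\delta=\rh\opt(\sigma_\rh)$. (If \alg never loads all $c$ requests it never completes, $\alg(\sigma_\rh)=\infty$.) In every case $\alg(\sigma_\rh)\ge\rh\opt(\sigma_\rh)$, and since the construction is scale-invariant this rules out $(\rh-\eps)$-competitiveness for any $\eps>0$.

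I do not anticipate a real obstacle: the only non-combinatorial ingredient is the inequality $3\delta-1\ge\rh\delta$, a one-line check. What needs care is keeping the case split exhaustive and aligned with the wording --- recognizing that ``serves $c'<c$ requests before loading the remaining $c-c'$'' is precisely ``a delivery at $\delta$ occurs before all $c$ requests are loaded'', and that ``collects the requests after time $\rh\delta-(\delta-1)$'' together with the degenerate ``never collects all'' case exhausts the rest --- and insisting throughout that ``collect'' means the server is physically at position~$1$, which is exactly what lets us charge a full $\delta-1$ for the final trip to $\delta$.
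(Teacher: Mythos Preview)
Your proposal is correct and follows essentially the same argument as the paper: compute $\opt=\delta$ on the initial-requests-only instance, use $\pos{1}\le 0$ and unit speed for the first assertion, charge $\TL{}+(\delta-1)>\rho\delta$ for late collection, and for the partial-service case lower-bound \alg by a trip to~$\delta$, back to~$1$, and to~$\delta$ again. The only cosmetic difference is that you exploit the first assertion to get the slightly sharper bound $3\delta-1$ (and hence need only $\delta\ge 1/(3-\rho)$), whereas the paper uses the cruder $\delta+2(\delta-1)=3\delta-2$ together with the stronger numerical fact $\delta>2/(3-\rho)$; both checks go through for the given~$\rho$.
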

\begin{proof}
\alg cannot collect any $\SR{(j)}$ before time $2$ since its position at time $1$ is $\pos{1}\le 0$.
Moreover, \alg is not $(\rh-\eps)$-competitive if it collects one of the requests after time $\rho\delta-(\delta-1)$, since it cannot finish before time $\rho\delta$ and we have
\begin{equation*}
\alg(\{\SR{(j)}\}_{j\in\{1,\dots, c\}})\ge\rho\delta=\rho\opt(\{\SR{(j)}\}_{j\in\{1,\dots, c\}}).
\end{equation*}
Assume \alg serves $c'<c$ requests before loading the remaining $c-c'$. Then, because of
\begin{equation}\label{equation: Not All Requests Taken}
\delta=\frac{3\rh^2-11}{-3\rh^3+15\rh-4}\overset{\rh>2.056}{>}\frac{2}{3-\rh},
\end{equation}
we have 
\begin{equation*}
\alg(\{\sigma_R^0\})\ge\delta+2(\delta-1)\overset{(\ref{equation: Not All Requests Taken})}{>}\rh\delta =\rh\opt(\{\sigma_R^0\}).
\end{equation*}
\end{proof}

We hence may assume that \alg loads all $c$ requests $\SR{(j)}$ at the same time. Let $\TL{}\in[2,\rho\delta-(\delta-1))$ be the time \alg loads the $c$ requests $\SR{(j)}$. We start the first stage and present a variant of a single iteration of the construction in~\cite{Disser1}: We let the request $\SL{}=(-\TL{},-\TL{};\TL{})$ appear and define the function
\begin{equation*}
\L{}(t)=(4-\rho)\cdot t- (2\rho-2)\cdot \TL{},
\end{equation*}
which can be viewed as a line in the path-time diagram. Because of $\rh>2$, we have $\L{}(\TL{})=(6-3\rh)\TL{}<0<\pos{\TL{}}$, i.e., \alg's position at time $\TL{}$ is to the right of the line $\L{}$.
Thus, \alg crosses the line $\L{}$ before it serves $\SL{}$. Let $\TR{}$ be the time \alg crosses $\L{}$ for the first time and let the request $\SR{}=(\TR{},\TR{};\TR{})$ appear.
Assume \alg crosses the line $\L{}$ and serves $\SR{}$ before $\SL{}$. Then it does not serve $\SR{}$ before time
\begin{equation}
\TR{}+|\L{}(\TR{})-\TR{}|=(2\rho-2)\TL{}+(\rho-2)\TR{}=\TR{*}.\label{equation: Too Late Right}
\end{equation}
Now assume \alg crosses $\L{}$ at time $\TR{}\ge\frac{3\rh-5}{7-3\rh}\TL{}$ and serves $\SL{}$ before $\SR{}$. Then it does not serve serve~$\SL{}$ before time
\begin{align}
\TR{}+|\L{}(\TR{})-(-\TL{})|&=(5-\rho)\TR{}- (2\rh-3)\TL{}\nonumber\\
&\ge(2\rh-2)\TR{}+(7-3\rh)\frac{3\rh-5}{7-3\rh}\TL{}-(2\rh-3)\TL{}\nonumber\\
&=(2\rho-2)\TR{}+(\rho-2)\TL{}=\TL{*}.\label{equation: Too Late Left}
\end{align}
The following lemma shows that the two requests cannot be served before these respective times by establishing that indeed $\TR{}\ge\frac{3\rh-5}{7-3\rh}\TL{}$.

\begin{lemma}\label{lemma: Critital Three And Four}
\alg can neither serve $\SL{}$ before time $\TL{*}$ nor can it serve $\SR{}$ before time $\TR{*}$.
\end{lemma}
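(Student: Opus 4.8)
The plan is to reduce \Cref{lemma: Critital Three And Four} to the single quantitative claim $\TR{}\ge\frac{3\rh-5}{7-3\rh}\TL{}$ about the first crossing time. Granting that, \eqref{equation: Too Late Right} already yields the bound on $\SR{}$: at time $\TR{}$ the server sits at position $\L{}(\TR{})$, so it reaches $\SR{}$ (at position $\TR{}$) no earlier than $\TR{}+|\L{}(\TR{})-\TR{}|$, and a one-line sign check (distinguishing $\L{}(\TR{})\lessgtr\TR{}$) turns this into $\ge\TR{*}$ in either case. For $\SL{}$ I would invoke \eqref{equation: Too Late Left}: the inequality $\TR{}\ge\frac{3\rh-5}{7-3\rh}\TL{}$ is exactly its hypothesis, and it also forces $\L{}(\TR{})>-\TL{}$ (because $\frac{3\rh-5}{7-3\rh}>\frac{2\rh-3}{4-\rh}$ at $\rh\approx 2.0585$), so the server genuinely has to travel the full distance $\L{}(\TR{})+\TL{}$ to reach $\SL{}$, arriving no earlier than $\TL{*}$. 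Everything thus hinges on lower bounding $\TR{}$.

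The first step turns eagerness into a forced detour. By \Cref{lemma: The Time TL} we may assume the $c$ initial requests $\SR{(j)}=(1,\delta;1)$ are all loaded at the single time $\TL{}$, at which moment the server is at position $1$ and fully loaded with requests all destined to $\delta$; eagerness then forces it to move straight to $\delta$, arriving at time $\TL{}+(\delta-1)$ without ever moving left in between. I would then argue the server does not cross $\L{}$ during $[\TL{},\TL{}+(\delta-1)]$: on this interval both $\pos{\cdot}$ and $\L{}(\cdot)$ are affine, with $\pos{\TL{}}=1>(6-3\rh)\TL{}=\L{}(\TL{})$ (using $\rh>2$) and $\pos{\TL{}+(\delta-1)}=\delta\ge(6-3\rh)\TL{}+(4-\rh)(\delta-1)=\L{}(\TL{}+(\delta-1))$; the last inequality is affine in $\TL{}$, hence worst at $\TL{}=2$ over the admissible range $\TL{}\in[2,\rh\delta-(\delta-1))$, where it collapses to the numerical inequality $(\rh-3)\delta\ge 8-5\rh$. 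Consequently $\TR{}\ge\TL{}+(\delta-1)$.

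The second step uses that for $t\ge\TL{}+(\delta-1)$ the server starts at $\delta$ and has unit speed, so $\pos{t}\ge\delta-(t-\TL{}-(\delta-1))$; plugging $t=\TR{}$ together with $\pos{\TR{}}=\L{}(\TR{})=(4-\rh)\TR{}-(2\rh-2)\TL{}$ and rearranging gives $(5-\rh)\TR{}\ge 2\delta-1+(2\rh-1)\TL{}$. It then remains to verify $\frac{2\delta-1+(2\rh-1)\TL{}}{5-\rh}\ge\frac{3\rh-5}{7-3\rh}\TL{}$ for all $\TL{}<\rh\delta-(\delta-1)$. Clearing the positive denominators makes this affine in $\TL{}$, with $\TL{}$-coefficient $3(\rh-2)(\rh+3)>0$ on the side to be dominated, so it is worst at $\TL{}=\rh\delta-(\delta-1)$; substituting $\delta=\frac{3\rh^2-11}{-3\rh^3+15\rh-4}$ there makes it hold with equality (indeed $\delta$ is calibrated precisely so that this crossing-time comparison is tight), hence strictly for smaller $\TL{}$. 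Combined with Step~1 this gives $\TR{}\ge\frac{3\rh-5}{7-3\rh}\TL{}$, completing the reduction.

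I expect the main obstacle to be calibration rather than any single hard idea: the no-early-crossing check at $\TL{}=2$ and the crossing-time comparison at $\TL{}=\rh\delta-(\delta-1)$ only barely go through at $\rh\approx 2.0585$, so the value of $\delta$, the threshold $\frac{3\rh-5}{7-3\rh}$, and the targets $\TR{*},\TL{*}$ must all be kept mutually consistent. The conceptually new ingredient over the TSP construction of~\cite{Disser1} is exactly the device exploited in Step~1: the forced detour of length $\delta-1$ needed to deliver the $c$ co-located initial requests is what delays the first crossing enough to push the ratio past $2.0346$.
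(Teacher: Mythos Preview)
Your proposal is correct and follows essentially the same route as the paper: reduce to the crossing-time bound $\TR{}\ge\frac{3\rh-5}{7-3\rh}\TL{}$, use eagerness to place the server at $\delta$ at time $\TL{}+(\delta-1)$, check it is still to the right of $\ell$ there, and then solve for the earliest possible crossing. Your endpoint analysis in Step~1 (testing the no-crossing inequality at $\TL{}=2$, where it is tightest because $6-3\rh<0$) is in fact the correct choice; the paper substitutes the other endpoint $\TL{}=\rh\delta-(\delta-1)$ at that step, which goes the wrong way, though the conclusion survives since the inequality happens to hold across the whole range.
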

\begin{proof}
Since \alg is eager, it delivers the $c$ requests $\SR{(j)}$ without waiting or detour, i.e., we have $\pos{\TL{}+(\delta-1)}=\delta$. 
Furthermore, we have
\begin{align*}
\L{}(\TL{}+(\delta-1))&\myoverset{}{=}{30}(4-\rh) (\TL{}+(\delta-1))- (2\rh-2)\TL{}\\
&\myoverset{}{=}{30}(6-3\rh)\TL{}+(4-\rh)(\delta-1)\\
&\myoverset{}{\le}{30}(6-3\rh)(\rh\delta-(\delta-1))+(4-\rh)(\delta-1)\\
&\myoverset{}{=}{30}\frac{3 \rh^4 - 18 \rh^3 + 3 \rh^2 + 50 \rh - 14}{3 \rh^3 - 15 \rh + 4}\\
&\myoverset{$\rh<2.06$}{<}{30} \delta=\pos{\TL{}+(\delta-1)},
\end{align*}
i.e., \alg's position at time $\TL{}+(\delta-1)$ is to the right of $\ell$. 
The earliest possible time \alg crosses $\L{}$ is the solution of
\begin{equation*}
\L{}(\TR{})=(4-\rh) \TR{}- (2\rh-2)\TL{}=\pos{\TL{}+(\delta-1)}+\TL{}+(\delta-1)-\TR{},
\end{equation*}
which is $\TR{}=\frac{2\rh-1}{5-\rh}\TL{}+\frac{2\delta-1}{5-\rh}$.
The inequality
\begin{align*}
\left(\frac{3\rh-5}{7-3\rh}-\frac{2\rh-1}{5-\rh}\right)\TL{}&=\frac{3\rh^2+3\rh-18}{3\rh^2-22\rh+35}\TL{}\\
&\le\frac{3\rh^2+3\rh-18}{3\rh^2-22\rh+35}(\rh\delta-(\delta-1)))\\
&=\frac{3 \rh^3+ 6 \rh^2- 15 \rh -18}{3\rh^4-15\rh^3-15\rh^2+79\rh-20}\\
&=\frac{2\delta-1}{5-\rh},
\end{align*}
implies that we have 
\begin{equation}
\TR{}\ge\frac{3\rh-5}{7-3\rh}\TL{}.\label{equation: Too Late Line}
\end{equation}
Because of inequality (\ref{equation: Too Late Right}) \alg does not serve~$\SR{}$ before~$\TR{*}$ and because of the inequalities (\ref{equation: Too Late Line}) and (\ref{equation: Too Late Left}) it does not serve~$\SL{}$ before time~$\TL{*}$.
\end{proof}

In fact, also the other properties of critical requests are satisfied.

\begin{lemma}\label{lemma: Indeed Critical}
The requests $\SR{}$ and $\SL{}$ of the request sequence $\sigma_\rh$ are critical.
\end{lemma}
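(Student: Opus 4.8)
The plan is to verify the five conditions of \Cref{definition: Critical Requests} for the requests $\SR{}=(\TR{},\TR{};\TR{})$ and $\SL{}=(-\TL{},-\TL{};\TL{})$ one at a time, using what has already been set up: \alg is eager, $\pos{\TL{}}=1$ and $\pos{\TL{}+(\delta-1)}=\delta$, $\TL{}\ge 2$, $\TR{}\ge\frac{3\rh-5}{7-3\rh}\TL{}$ (inequality~(\ref{equation: Too Late Line})), $\pos{t}>\L{}(t)$ throughout $[\TL{},\TL{}+(\delta-1)]$ (as in the proof of \Cref{lemma: Critital Three And Four}), and \Cref{lemma: Critital Three And Four} itself. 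Conditions~(iii) and~(iv) then come for free, since \Cref{lemma: Critital Three And Four} asserts \emph{unconditionally} that \alg serves $\SR{}$ no earlier than $\TR{*}$ and $\SL{}$ no earlier than $\TL{*}$, which is stronger than what~(iii) and~(iv) demand.

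For~(i) I would observe that every request presented up to time $\TR{}$ has both its source and destination inside $[-\TL{},\TR{}]$; the only non-degenerate requests are the $c$ bundled requests $\SR{(j)}=(1,\delta;1)$, and $-\TL{}\le 1<\delta\le\TR{}$ holds because $\TL{}\ge 2$ and $\TR{}\ge\frac{3\rh-5}{7-3\rh}\TL{}\ge\frac{6\rh-10}{7-3\rh}>\delta$ at $\rh\approx 2.0585$. Each of the two tours $\move{-\TL{}}\oplus\move{\TR{}}$ and $\move{\TR{}}\oplus\move{-\TL{}}$ sweeps all of $[-\TL{},\TR{}]$ and passes the point~$1$ before the point~$\delta$, so it can pick up all $c$ requests $\SR{(j)}$ (which the capacity permits) and unload them at~$\delta$, while the remaining visit requests are served by merely passing their positions; release times are respected since those positions are reached late enough. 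Hence both tours serve all requests presented until time~$\TR{}$.

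For~(ii), \alg's position at time $\TR{}$ equals $\L{}(\TR{})$ by definition of $\TR{}$ as the first crossing, so I must show $-\TL{}<\L{}(\TR{})<\TR{}$. The left inequality is equivalent to $\TR{}>\tfrac{2\rh-3}{4-\rh}\TL{}$ and follows from $\TR{}\ge\tfrac{3\rh-5}{7-3\rh}\TL{}$ together with $\tfrac{3\rh-5}{7-3\rh}>\tfrac{2\rh-3}{4-\rh}$ (i.e.\ $3\rh^2-6\rh+1>0$); the right inequality is equivalent to $\TR{}<\tfrac{2\rh-2}{3-\rh}\TL{}$ and will follow from the bound proved for~(v). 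Moreover \alg has not served $\SR{}$ by time $\TR{}$ (it is not at position~$\TR{}$ then) and has not served $\SL{}$ by time $\TR{}$, because $\pos{t}\ge\L{}(t)>-\TL{}$ for all $t\in[\TL{},\TR{}]$ (the last inequality holding since $\L{}(t)+\TL{}\ge(7-3\rh)\TL{}>0$ for $t\ge\TL{}$) while $\SL{}$ has not appeared before time $\TL{}$; since both requests appear no earlier than $\TL{}\le\TR{}$, \alg serves both strictly after $\TR{}$.

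The crux, and the step I expect to be the only genuinely delicate one, is~(v), the bound $\tfrac{\TR{}}{\TL{}}\le\tfrac{4\rh^2-30\rh+50}{-8\rh^2+50\rh-66}$. The idea is that the line $\L{}$ advances at speed $4-\rh>1$, faster than the unit-speed server, so the server cannot stay to the right of $\L{}$ indefinitely: eagerness gives $\pos{\TL{}+(\delta-1)}=\delta$, hence $\pos{t}\le t-\TL{}+1$ for all $t\ge\TL{}+(\delta-1)$, so $t\mapsto\pos{t}-\L{}(t)$ has slope at most $\rh-3<0$ on that ray, is positive at $t=\TL{}+(\delta-1)$, and therefore vanishes no later than $t_0:=\frac{(2\rh-3)\TL{}+1}{3-\rh}$; thus $\TR{}\le t_0$, and $\TL{}\ge 2$ yields $\tfrac{\TR{}}{\TL{}}\le\tfrac{4\rh-5}{6-2\rh}$. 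It then remains to check $\tfrac{4\rh-5}{6-2\rh}\le\tfrac{4\rh^2-30\rh+50}{-8\rh^2+50\rh-66}$: both denominators are positive at $\rh\approx 2.0585$, and after cross-multiplying by their product the right-hand side minus the left-hand side equals $6\,(4\rh^3-26\rh^2+39\rh-5)$, which is $0$ because $\rh$ is a root of $4\rh^3-26\rh^2+39\rh-5$ — so~(v) holds, with equality, which is exactly why this root was chosen. This also settles the remaining inequality in~(ii), since $\tfrac{4\rh-5}{6-2\rh}<\tfrac{4\rh-4}{6-2\rh}=\tfrac{2\rh-2}{3-\rh}$.
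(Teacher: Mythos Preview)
Your proof is correct and follows essentially the same route as the paper's: (iii)--(iv) via \Cref{lemma: Critital Three And Four}, (ii) via $\pos{\TR{}}=\L{}(\TR{})$ and the fact that \alg stays to the right of $\L{}$ on $[\TL{},\TR{}]$, and (v) via the upper bound $\TR{}\le\frac{(2\rh-3)\TL{}+1}{3-\rh}$ (obtained by letting the server move right as long as possible), then $\TL{}\ge 2$ and the defining cubic for~$\rh$. You add two details the paper leaves implicit: the check $\delta<\TR{}$ needed for (i), and the explicit derivation of $-\TL{}<\L{}(\TR{})<\TR{}$ for (ii). For the latter's right half you could simply invoke $|\pos{t}|\le t$, but your route through the ratio bound is also fine.
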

\begin{proof}
We have to show that the requests $\SR{}$ and $\SL{}$ of the request sequence $\sigma_\rh$ satisfy the properties (i) to (v) of \Cref{definition: Critical Requests}.
The release time of every request is equal to its starting position, thus every request can be served/loaded immediately once its starting position is visited and (i) of \Cref{definition: Critical Requests} is satisfied.
At time $\TR{}$ \alg has not served $\SR{}$, because for that it would have needed to go right from time $0$ on; it has not served $\SL{}$ either, because during the period of time $[t_L,t_R]$ \alg and $\SL{}$ were on different sides of $\ell$. This establishes the first part of (ii) of \Cref{definition: Critical Requests}. Furthermore at time $\TR{}$ \alg is at position $\pos{\TR{}}=(4-\rho)\TR{}- (2\rho-2)\TL{}$ with
\begin{equation*}
-\TL{}\le (4-\rho)\TR{}- (2\rho-2)\TL{}\le \TR{}
\end{equation*}
Therefore, the second part of (ii) of \Cref{definition: Critical Requests} is satisfied as well.

\Cref{lemma: Critital Three And Four} shows that (iii) and (iv) of \Cref{definition: Critical Requests} are satisfied.
It remains to show that property (v) is satisfied. For this we need to examine the release time $\TR{}$ of $\SR{}$. 
The time $\TR{}$ is largest if \alg tries to avoid crossing the line $\L{}$ for as long as possible, i.e., it continues to move right after serving the requests $\SR{(j)}$. Then, we have $\pos{t}=1-\TL{}+t$ for $t\in[\TL{},\TR{}]$ and $\TR{}$ is the solution of
\begin{equation*}
1-\TL{}+\TR{}=(4-\rh)\TR{}-(2\rh-2)\TL{}.
\end{equation*}
Thus, in general, we have $\TR{}\le\frac{2\rh-3}{3-\rh}\TL{}+\frac{1}{3-\rh}$, i.e.,
\begin{equation}
\frac{\TR{}}{\TL{}}\le\frac{2\rh-3}{3-\rh}+\frac{1}{(3-\rh)\TL{}}\overset{\TL{}\ge 2}{\le}\frac{4\rh-5}{6-2\rh}.\label{equation: Actual Ratio TL And TR}
\end{equation}
For property (v), we need $\frac{\TR{}}{\TL{}}\le \frac{4\rh^2-30\rh+50}{-8\rh^2+50\rh-66}$.
This is satisfied if
\begin{equation*}
\frac{4\rh-5}{6-2\rh}\le \frac{4\rh^2-30\rh+50}{-8\rh^2+50\rh-66},
\end{equation*}
which is equivalent to
\begin{equation*}
4\rh^3-26\rh^2+39\rh-5\ge 0,
\end{equation*}
which is true by definition of $\rho$.\end{proof}

Together with \Cref{lemma: Critical Requests}, this completes the proof of Theorem~\ref{theorem: General Lower Bound}.

\section{An Improved Algorithm}\label{section: An Improved Algorithm}

One of the simplest approaches for an online algorithm to solve \dar is the following: 
Always serve the set of currently unserved requests in an optimum offline schedule and ignore all new incoming request while doing so. 
Afterwards, repeat this procedure with all ignored unserved requests until no new requests arrive. 
This simple algorithm that is often called \textsc{Ignore}~\cite{Ascheuer1} has a competitive ratio of exactly $4$~\cite{BirxDisser/19,Krumke0}. 
The main weakness of \textsc{Ignore} is that it always starts its schedule immediately. 
Ascheuer et al. showed that it is beneficial if the server waits sometimes before starting a schedule and introduced the \smartstart algorithm~\cite{Ascheuer1}, which has a competitive ratio of roughly $2.94$~\cite{BirxDisser/19}.

We define~$L(\timeSch,\posSch,R)$ to be the smallest makespan of a schedule that starts at position~$p$ at time~$t$ and serves all requests in $R \subseteq \sigma$ after they appeared (i.e., the schedule must respect release times).
For the description of online algorithms, we denote by $t$ the current time and by $R_t$ the set of requests that have appeared until time~$t$ but have not been served yet. 

\vspace*{1em}
\begin{algorithm}
\SetKwBlock{Repeat}{repeat}{}
\DontPrintSemicolon
$\posSch_1\leftarrow 0$\;

\For{$j = 1,2,\dots$}{
\While{current time $t < L(\timeSch,\posSch_j,R_t)/(\Theta-1)$}{wait\;}
$\timeSch_j\leftarrow t$\;
$S_j\leftarrow$ optimal offline schedule serving $R_t$ starting from $\posSch_j$\;
execute $S_j$\;
$\posSch_{j+1}\leftarrow $ current position}
 \caption{$\smartstart$}\label{algorithm: Smartstart}
\end{algorithm}
\vspace*{1em}
The algorithm \smartstart is given in \Cref{algorithm: Smartstart}. 
Essentially, at time~$t$, \smartstart waits before starting an optimal schedule to serve all available requests at time
\begin{equation}
  \min_{\timeSch' \ge \timeSch}\left\{\timeSch' \ge\frac{L(\timeSch',\posSch,R_{\timeSch'})}{\Theta-1}\right\},\label{equation: Smartstart Definition}
\end{equation}
where $\posSch$ is the current position of the server and $\Theta > 1$ is a parameter of the algorithm that scales the waiting time.
Importantly, like \textsc{Ignore}, \smartstart ignores incoming requests while executing a schedule. 

Birx and Disser identified that \smartstart's waiting routine defined by inequality (\ref{equation: Smartstart Definition}) has a critical weakness \cite[Lemma 4.1]{BirxDisser/19}.
It is possible to lure the server to any position $q$ in time $q+\eps$ for every $\eps>0$.
Roughly speaking, a request $\sigma_1=((\Theta-1)\eps,(\Theta-1)\eps;(\Theta-1)\eps)$ is released first and then for every $i\in\{2,\dots,\frac{q}{\eps}\}$ a request $\sigma_i=(i\eps,i\eps;i\eps)$ follows.
The schedule to serve the request $\sigma_1$ is started at time $\eps$ and finished at time $2\eps$. The schedule to serve the request at position $i\eps$ is not started earlier than time 
\begin{equation}
  \frac{L(i\eps,(i-1)\eps,\{\sigma_i\})}{\Theta-1}=\frac{|(i-1)\eps-i\eps|}{\Theta-1}=\frac{\eps}{\Theta-1}.
\end{equation}
This time is (depending on the choice of $\Theta$) later than the current time $i\eps$ for every $i\ge 2$. 
Thus there is no waiting time for any schedule except the first one and the server reaches position $q$ at time $q+\eps$.
We see that the request sequence to lure the server away heavily uses that inequality (\ref{equation: Smartstart Definition}) relies on \smartstart's current position $\posSch$, when computing the waiting time.
Thus, we modify the waiting routine of \smartstart to avoid luring accordingly.
Denote by $\SLE{t}$ the set of requests that have been released until time $t$.

\vspace*{1em}
\begin{algorithm}
\SetKwBlock{Repeat}{repeat}{}
\DontPrintSemicolon
$\posSch_1\leftarrow 0$\;

\For{$j = 1,2,\dots$}{
\While{current time $t < L(\timeSch,0,\SLE{\timeSch})/(\Theta-1)$}{wait\;}
$\timeSch_j\leftarrow t$\;
$S_j\leftarrow$ optimal offline schedule serving $R_t$ starting from $\posSch_j$\;
execute $S_j$\;
$\posSch_{j+1}\leftarrow $ current position}

 \caption{\algo}\label{algorithm: Algo}
\end{algorithm}
\vspace*{1em}
The improved algorithm \algo is given in \Cref{algorithm: Algo}. 
At time~$t$, it waits before starting an optimal schedule to serve all available requests at time
\begin{equation}
  \min_{\timeSch' \ge \timeSch}\left\{\timeSch' \ge\frac{L(\timeSch',0,\SLE{\timeSch'})}{\Theta-1}\right\}.\label{equation: Algo Definition}
\end{equation}
Again, $\Theta > 1$ is a parameter of the algorithm that scales the waiting time.
In contrast to \smartstart, the waiting time is dependent on the length of the optimum offline schedule serving all requests appeared until the current time and starting from the origin.
This guarantees that the server cannot be forced to reach any position $q$ before time $q/(\Theta-1)$ since we always have $L(\timeSch,0,\SLE{\timeSch})>q$ if $\SLE{\timeSch}$ contains a request with destination in position $q$.

Whenever we need to distinguish the behavior of \algo for different values of~$\Theta > 1$, we write $\algo_{\Theta}$ to make the choice of~$\Theta$ explicit.
The length of \algo's trajectory is denoted by $\algo(\sigma)$.
Note that the schedules used by \textsc{Ignore}, \smartstart and \algo are NP-hard to compute for $1 < c < \infty$, see~\cite{Disser1}. 

We let $N\in \mathbb{N}$ be the number of schedules needed by \algo to serve~$\sigma$.
The $j$-th schedule is denoted by $S_j$, its starting time by~$\timeSch_j$, its starting point by~$\posSch_j$, its ending point by~$\posSch_{j+1}$, and the set of requests served in~$S_j$ by~$\sigma_{S_j}$.
For convenience, we set $\timeSch_0 = \posSch_0 = 0$.

\subsection{Upper Bound for {\normalfont\algo}}\label{Upper Bound for smarterstarts competitive ratio}
We show the upper bound of \Cref{theorem: Main Theorem}.
The completion time of \algo is
\begin{equation}
\algoS=\timeSch_N+L(\timeSch_N,\posSch_N,\sigma_{S_N}).\label{equation: Costs Algo}
\end{equation}
First, observe that, for all $0 \leq t \leq t'$, $p, p' \in \mathbb{R}$, and $R \subseteq \sigma$, we have
\begin{align}
  L(\timeSch,\posSch,R) &\ge  L(\timeSch',\posSch,R),           \label{equation: Schedule Time}\\
  L(\timeSch,\posSch,R) &\le  |\posSch - \posSch'| + L(\timeSch,\posSch',R),  \label{equation: Schedule Triangule Eq}\\
  L(t,0,\SLE{t})&\le L(t,0,\sigma)\le L(0,0,\sigma)\le \opts.\label{equation: Schedule Less Than Opt}
\end{align}
Similar to \cite{BirxDisser/19}, we distinguish between two cases, depending on whether or not \algo waits after finishing schedule~$S_{N-1}$ and before starting the final schedule~$S_N$.
If \algo waits, the starting time of schedule~$S_N$ is given by
\begin{equation}
\timeSch_N=\frac{1}{\Theta-1}L(\timeSch_N,0,\SLE{\timeSch_N}),\label{equation: Starting Time No Wait}
\end{equation}
otherwise, we have
\begin{equation}\label{equation: Starting Time Wait}
\timeSch_N=\timeSch_{N-1}+L(\timeSch_{N-1},\posSch_{N-1},\sigma_{S_{N-1}}).
\end{equation}
We start by giving a lower bound on the starting time of a schedule.
It was shown in \cite{BirxDisser/19} that the schedule $S_j$ of \smartstart is never started earlier than time $\frac{|\posSch_{j+1}|}{\Theta}$. 
This changes slightly for \algo.

\begin{lemma}\label{lemma: Lower Bound Starting Time}
Algorithm \algo does not start schedule $S_j$ earlier than time $\frac{|\posSch_{j+1}|}{\Theta-1}$, i.e., we have $\timeSch_j\ge\frac{|\posSch_{j+1}|}{\Theta-1}$.
\end{lemma}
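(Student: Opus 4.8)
The plan is to derive the inequality from two facts. The first is built into \Cref{algorithm: Algo}: regardless of whether \algo waits before starting schedule $S_j$ (cases \eqref{equation: Starting Time No Wait} and \eqref{equation: Starting Time Wait}), at the time $\timeSch_j$ the \texttt{while}-loop has exited, so its guard is false, i.e.\ $\timeSch_j \ge L(\timeSch_j,0,\SLE{\timeSch_j})/(\Theta-1)$. Hence it suffices to prove the purely geometric statement $L(\timeSch_j,0,\SLE{\timeSch_j}) \ge |\posSch_{j+1}|$; dividing by $\Theta-1$ then finishes the proof.

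For the geometric statement, I would first pin down the endpoint $\posSch_{j+1}$ of $S_j$. Since $S_j$ is a minimum-makespan schedule for $\sigma_{S_j} = R_{\timeSch_j}$ starting at $\posSch_j$, it performs no movement after its final delivery, so $\posSch_{j+1}$ is the destination of some request of $\sigma_{S_j}$ (if $\sigma_{S_j}=\emptyset$ then $\posSch_{j+1}=\posSch_j$, and the claim follows from the bound for $S_{j-1}$ together with $\timeSch_j\ge\timeSch_{j-1}$ by a trivial induction, with base case $\timeSch_0=0=|\posSch_1|/(\Theta-1)$). Every request in $\sigma_{S_j}$ has release time at most $\timeSch_j$, hence $\sigma_{S_j}\subseteq\SLE{\timeSch_j}$, so every schedule that serves $\SLE{\timeSch_j}$ must in particular visit the position $\posSch_{j+1}$; starting from the origin, this forces length at least $|\posSch_{j+1}|$. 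Applying this to a schedule realizing $L(\timeSch_j,0,\SLE{\timeSch_j})$ yields $L(\timeSch_j,0,\SLE{\timeSch_j})\ge|\posSch_{j+1}|$, as desired.

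The step I expect to require the most care is identifying $\posSch_{j+1}$ as (the destination of) a request that every origin-rooted schedule for $\SLE{\timeSch_j}$ is forced to reach. On the line with finite capacity, the optimal schedule $S_j$ may shuttle back and forth between sources and destinations, so one should verify carefully that its terminal position is genuinely a served destination --- equivalently, that $\posSch_{j+1}$ lies in the interval spanned by the sources and destinations of $\sigma_{S_j}$, which already suffices since that interval is contained in the interval every schedule serving $\SLE{\timeSch_j}$ must traverse. Once this is settled, the remaining steps are immediate from the definition of \algo and the fact that the server moves at unit speed.
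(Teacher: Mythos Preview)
Your proposal is correct and follows essentially the same route as the paper: derive $\timeSch_j \ge L(\timeSch_j,0,\SLE{\timeSch_j})/(\Theta-1)$ from the waiting condition~\eqref{equation: Algo Definition}, observe that $\posSch_{j+1}$ is the destination of some request in $\sigma_{S_j}\subseteq\SLE{\timeSch_j}$, and conclude $L(\timeSch_j,0,\SLE{\timeSch_j})\ge|\posSch_{j+1}|$. The extra care you take---the induction for the case $\sigma_{S_j}=\emptyset$ and the interval-traversal reformulation in the final paragraph---is unnecessary (the paper simply asserts that an optimal schedule terminates at a request's destination, which your ``no movement after the final delivery'' sentence already establishes), but it does no harm.
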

\begin{proof}
Since $\posSch_{j+1}$ is the ending point of schedule $S_j$, there is a request with destination in $\posSch_{j+1}$ in the set $\sigma_{S_j}$. All requests of $\sigma_{S_j}$ appear before time $\timeSch_j$, which implies that they are part of the set $\SLE{\timeSch_j}$. Thus, we have
\begin{equation}
L(\timeSch_j,0,\SLE{\timeSch_j})\ge |\posSch_{j+1}|\label{equation: Estimate Length With Position}
\end{equation}
and therefore
\begin{equation*}
\timeSch_j\overset{(\ref{equation: Algo Definition})}{\ge}\frac{L(\timeSch_j,0,\SLE{\timeSch_j})}{\Theta-1}\overset{(\ref{equation: Estimate Length With Position})}{\ge} \frac{|\posSch_{j+1}|}{\Theta-1}\qedhere
\end{equation*}
\end{proof}

Using \Cref{lemma: Lower Bound Starting Time}, we can give an upper bound on the length of \algo's schedules, which is an essential ingredient in our upper bounds. The following lemma is proved similarly to \cite[Lemma 3.2]{BirxDisser/19}, which yields an upper bound of $(1+\frac{\Theta}{\Theta+2})\optS$ for the length of every schedule $S_j$ of \smartstart.

\begin{lemma}\label{lemma: Costs per Schedule}
For every schedule $S_j$ of \algo, we have
\begin{equation*}
L(\timeSch_j,\posSch_j,\sigma_{S_j})\le \left(1+\frac{\Theta-1}{\Theta+1}\right)\optS.  
\end{equation*}
\end{lemma}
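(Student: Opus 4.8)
The plan is to follow the proof of \cite[Lemma 3.2]{BirxDisser/19} for \smartstart, substituting our \Cref{lemma: Lower Bound Starting Time} for the corresponding starting-time estimate; since that estimate changes from $\timeSch_{j-1}\ge|\posSch_j|/\Theta$ to $\timeSch_{j-1}\ge|\posSch_j|/(\Theta-1)$, the constant $\frac{\Theta}{\Theta+2}$ of \smartstart becomes $\frac{\Theta-1}{\Theta+1}$. For $j=1$ the bound is immediate: $\posSch_1=0$, so $L(\timeSch_1,\posSch_1,\sigma_{S_1})=L(\timeSch_1,0,\sigma_{S_1})\le\optS$ by~(\ref{equation: Schedule Less Than Opt}) together with $\sigma_{S_1}\subseteq\SLE{\timeSch_1}$. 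So assume $j\ge2$.

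The argument for $j\ge2$ combines an upper bound on the schedule length with a lower bound on $\optS$. For the upper bound, the triangle inequality~(\ref{equation: Schedule Triangule Eq}) and monotonicity of $L$ in the request set give
\begin{equation*}
L(\timeSch_j,\posSch_j,\sigma_{S_j})\le|\posSch_j|+L(\timeSch_j,0,\sigma_{S_j})\le|\posSch_j|+\optS .
\end{equation*}
For the lower bound, I would use two structural facts. First, since \algo runs every schedule to completion and $S_{j-1}$ serves \emph{all} requests available at time $\timeSch_{j-1}$, every request of $\sigma_{S_j}$ is released strictly after $\timeSch_{j-1}$; combined with $|\posSch_j|\le(\Theta-1)\timeSch_{j-1}$ from \Cref{lemma: Lower Bound Starting Time}, all requests of $\sigma_{S_j}$ appear after time $|\posSch_j|/(\Theta-1)$. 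Second, $\posSch_j$ is the destination of a request in $\sigma_{S_{j-1}}$, which \opt must also serve, so $\optS\ge|\posSch_j|$.

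One then performs a case distinction on the position of $\posSch_j$ relative to the span of source/destination coordinates of $\sigma_{S_j}$ (w.l.o.g.\ $\posSch_j\ge0$), as in \cite[Lemma 3.2]{BirxDisser/19}. If all coordinates of $\sigma_{S_j}$ lie in $[0,\posSch_j]$, the schedule $S_j$ only sweeps back toward the origin and one bounds $L(\timeSch_j,\posSch_j,\sigma_{S_j})$ directly against $L(\timeSch_j,0,\sigma_{S_j})\le\optS$. Otherwise $\sigma_{S_j}$ forces the server past the origin or beyond $\posSch_j$, and then \opt, which must reach $\posSch_j$ and also visit the opposite extreme of $\sigma_{S_j}$ only after time $\timeSch_{j-1}\ge|\posSch_j|/(\Theta-1)$, satisfies a sharper lower bound that absorbs the $|\posSch_j|$ term from the upper bound; plugging in $|\posSch_j|\le(\Theta-1)\timeSch_{j-1}$ then yields $L(\timeSch_j,\posSch_j,\sigma_{S_j})\le\bigl(1+\tfrac{\Theta-1}{\Theta+1}\bigr)\optS$ in every case.

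The main obstacle is the geometric bookkeeping in this last step: one has to line up the sweep-type upper bounds on $L(\timeSch_j,\posSch_j,\sigma_{S_j})$ with matching lower bounds on $\optS$ so that (a) release times are respected --- in the extremal configuration the late requests of $\sigma_{S_j}$ force waiting, which has to be charged to \opt as well --- and (b) everything remains valid for arbitrary finite capacity $c$, where a single sweep of an interval need not serve all transport requests inside it; routing through the origin, i.e.\ using $L(\timeSch_j,\posSch_j,\sigma_{S_j})\le|\posSch_j|+L(\timeSch_j,0,\sigma_{S_j})$, is the clean way to handle (b). Obtaining the exact constant $\tfrac{\Theta-1}{\Theta+1}$ rather than a weaker one requires using the positional bound $|\posSch_j|\le(\Theta-1)\timeSch_{j-1}$ and the release-time lateness of $\sigma_{S_j}$ jointly; estimating $|\posSch_j|$ and $L(\timeSch_j,0,\sigma_{S_j})$ separately only gives the trivial bound $2\,\optS$.
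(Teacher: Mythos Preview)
Your plan diverges from the paper's proof, and the part you identify as ``the main obstacle'' is precisely where the paper takes a different, cleaner route that avoids the geometric bookkeeping entirely.

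The paper does \emph{not} split cases on whether the coordinates of $\sigma_{S_j}$ lie in $[0,\posSch_j]$ or extend beyond. Instead it introduces a pivot request: let $\sigma^{\opt}_{S_j}$ be the first request of $\sigma_{S_j}$ that \opt collects, with starting position $\posReq^{\opt}_j$ and release time $\timeReq^{\opt}_j>\timeSch_{j-1}$. The triangle inequality through $\posReq^{\opt}_j$ together with $L(\timeSch_j,\posReq^{\opt}_j,\sigma_{S_j})\le\optS-\timeReq^{\opt}_j$ gives
\[
L(\timeSch_j,\posSch_j,\sigma_{S_j})<\optS+|\posReq^{\opt}_j-\posSch_j|-\timeSch_{j-1}.
\]
The case split is then on the order in which \opt visits $\posSch_j$ versus collecting $\sigma^{\opt}_{S_j}$: if \opt visits $\posSch_j$ first, it afterwards still serves all of $\sigma_{S_j}$ from $\posSch_j$, so $L(\timeSch_j,\posSch_j,\sigma_{S_j})\le\optS$ directly; if \opt collects $\sigma^{\opt}_{S_j}$ first, then $\timeSch_{j-1}+|\posReq^{\opt}_j-\posSch_j|<\optS$, whence $L(\timeSch_j,\posSch_j,\sigma_{S_j})<2\optS-2\timeSch_{j-1}\le 2\optS-\frac{2}{\Theta-1}|\posSch_j|$ via \Cref{lemma: Lower Bound Starting Time}. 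Combining this second bound with your triangle bound $L\le\optS+|\posSch_j|$ and maximizing the minimum over $|\posSch_j|$ gives the constant $\frac{\Theta-1}{\Theta+1}$.

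Your span-based case split has a concrete gap in the first case: with finite capacity, ``all coordinates in $[0,\posSch_j]$'' does \emph{not} imply that $S_j$ is a single sweep toward the origin, nor that $L(\timeSch_j,\posSch_j,\sigma_{S_j})\le L(\timeSch_j,0,\sigma_{S_j})$; multiple passes may be needed, and starting from the right endpoint need not be at least as good as starting from the left. You flag this yourself but do not resolve it. The pivot-request argument sidesteps this entirely, since in the ``\opt visits $\posSch_j$ first'' branch one compares $L(\timeSch_j,\posSch_j,\sigma_{S_j})$ to what \opt itself does after reaching $\posSch_j$, which is valid for every capacity.
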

\begin{proof}
First, we notice that by the triangle inequality we have
\begin{equation}\label{equation: Triangle Zero}
L(\timeSch_j,\posSch_j,\sigma_{S_j})\le |\posSch_j|+ L(\timeSch_j,0,\sigma_{S_j})\le \optS+|\posSch_j|.
\end{equation}
Now, let~$\sigma^{\opt}_{S_j}$ be the first request of~$\sigma_{S_j}$ that is picked up by \opt and let~$\posReq^\opt_j$ be its starting position and~$\timeReq^\opt_j$ be its release time. We have
\begin{equation}\label{equation: Triangle First Opt}
L(\timeSch_j,\posSch_j,\sigma_{S_j})\le |\posReq^\opt_j-\posSch_j|+ L(\timeSch_j,\posReq^\opt_j,\sigma_{S_j}),
\end{equation}
again by the triangle inequality. Since \opt serves all requests of $\sigma_{S_j}$ starting at position $\posReq^\opt_j$ no earlier than time $\timeReq^\opt_j$, we have
\begin{equation}\label{equation: Opt Final Schedule}
L(\timeSch_j,\posReq^\opt_j,\sigma_{S_j})\overset{\timeReq^\opt_j\le \timeSch_j}{\le}L(\timeReq^\opt_j,\posReq^\opt_j,\sigma_{S_j})\le \optS-\timeReq^\opt_j,
\end{equation}
which yields
\begin{align}
L(\timeSch_j,\posSch_j,\sigma_{S_j})&\myoverset{(\ref{equation: Triangle First Opt})}{\le}{35}|\posReq^\opt_j-\posSch_j|+ L(\timeSch_j,\posReq^\opt_j,\sigma_{S_j})\nonumber\\
&\myoverset{(\ref{equation: Opt Final Schedule})}{\le}{35}\optS+|\posReq^\opt_j-\posSch_j|-\timeReq^\opt_j\nonumber\\
&\myoverset{$\timeSch_{j-1}<\timeReq^\opt_j$}{<}{35}\optS+|\posReq^\opt_j-\posSch_j|-\timeSch_{j-1}.\label{equation: Schedule Length Via Opt Start}
\end{align}
Since $\posSch_j$ is the destination of a request, \opt needs to visit it. In the case that \opt visits $\posSch_j$ before collecting $\sigma^\opt_{S_j}$, \opt still has to collect and serve every request of $\sigma_{S_j}$ after it has visited position $\posSch_j$ the first time, which directly implies
\begin{equation*}
\left(1+\frac{\Theta-1}{\Theta+1}\right)\optS>\optS\ge L(|\posSch_j|,\posSch_j,\sigma_{S_j})\overset{|\posSch_j|\le \timeSch_j}{\ge} L(\timeSch_j,\posSch_j,\sigma_{S_j}).
\end{equation*}
On the other hand, if \opt collects $\sigma^\opt_{S_j}$ before visiting the position $\posSch_j$, we have
\begin{equation}
\timeSch_{j-1}+|\posReq^\opt_j-\posSch_j|\overset{\timeSch_{j-1}<\timeReq^\opt_j}{<}\timeReq^\opt_j+|\posReq^\opt_j-\posSch_j|\le \optS,\label{equation: Opt Collect Before Visit}
\end{equation}
since \opt cannot collect $\sigma^\opt_{S_j}$ before time $\timeReq^\opt_j$ and then still has to visit position $\posSch_j$. 
Thus, we have
\begin{align}
L(\timeSch_j,\posSch_j,\sigma_{S_j})&\myoverset{(\ref{equation: Schedule Length Via Opt Start})}{<}{35} \optS+|\posReq^\opt_j-\posSch_j|-\timeSch_{j-1}\nonumber\\
&\myoverset{(\ref{equation: Opt Collect Before Visit})}{\le}{35}2\optS-2\timeSch_{j-1}\nonumber\\
&\myoverset{\text{Lem.~}\ref{lemma: Lower Bound Starting Time}}{\le}{35}2\optS-2\frac{|\posSch_j|}{\Theta-1}.\label{equation: Second Approximation For Schedule Length}
\end{align}
This implies
\begin{align*}
L(\timeSch_j,\posSch_j,\sigma_{S_j})&\myoverset{(\ref{equation: Triangle Zero}),(\ref{equation: Second Approximation For Schedule Length})}{\le}{25} \min\left\{\optS+|\posSch_j|,2\optS-\frac{2}{\Theta-1}|\posSch_j|\right\}\\
&\myoverset{}{\le}{25}\left(1+\frac{\Theta-1}{\Theta+1}\right)\optS,
\end{align*}
since the minimum above is largest for $|\posSch_j|=\frac{\Theta-1}{\Theta+1}\optS$.
\end{proof}

The following proposition uses \Cref{lemma: Costs per Schedule} to provide an upper bound for the competitive ratio of \algo, in the case that \algo does have a waiting period before starting the final schedule.

\begin{proposition}\label{proposition: Upper Bound Waiting}
In case \algo waits before executing $S_N$, we have
\begin{equation*}
\frac{\algo(\sigma)}{\opt(\sigma)} \le f_1(\Theta) := \frac{2\Theta^2-\Theta+1}{\Theta^2-1}.
\end{equation*}
\end{proposition}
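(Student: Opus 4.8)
The plan is to obtain the bound as an almost immediate consequence of the cost formula (\ref{equation: Costs Algo}) together with the two structural estimates already in hand, namely the waiting condition (\ref{equation: Starting Time No Wait}) and \Cref{lemma: Costs per Schedule}. Starting from $\algoS=\timeSch_N+L(\timeSch_N,\posSch_N,\sigma_{S_N})$, I would bound the two summands separately and add.

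For the first summand, since by assumption \algo waits before executing $S_N$, identity (\ref{equation: Starting Time No Wait}) gives $\timeSch_N=\frac{1}{\Theta-1}L(\timeSch_N,0,\SLE{\timeSch_N})$, and (\ref{equation: Schedule Less Than Opt}) bounds $L(\timeSch_N,0,\SLE{\timeSch_N})\le\optS$, so $\timeSch_N\le\frac{1}{\Theta-1}\optS$. For the second summand I would apply \Cref{lemma: Costs per Schedule} with $j=N$, which yields $L(\timeSch_N,\posSch_N,\sigma_{S_N})\le\bigl(1+\frac{\Theta-1}{\Theta+1}\bigr)\optS$. Combining the two estimates,
\[
\algoS\le\left(\frac{1}{\Theta-1}+1+\frac{\Theta-1}{\Theta+1}\right)\optS=\frac{(\Theta+1)+(\Theta^2-1)+(\Theta-1)^2}{\Theta^2-1}\,\optS=\frac{2\Theta^2-\Theta+1}{\Theta^2-1}\,\optS,
\]
which is exactly $f_1(\Theta)\optS$, as claimed.

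There is no genuine obstacle here: the proposition is essentially a corollary of \Cref{lemma: Costs per Schedule} and the waiting characterization, and the only step requiring care is the routine arithmetic of putting the three fractions over the common denominator $\Theta^2-1$ and checking that the numerator simplifies to $2\Theta^2-\Theta+1$. (One should also note implicitly that $\Theta>1$ keeps all denominators positive, which is part of the standing assumption on the algorithm's parameter.)
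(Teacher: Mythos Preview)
Your proof is correct and essentially identical to the paper's own argument: both start from (\ref{equation: Costs Algo}), substitute (\ref{equation: Starting Time No Wait}) for $\timeSch_N$, bound it via (\ref{equation: Schedule Less Than Opt}), apply \Cref{lemma: Costs per Schedule} to the schedule length, and simplify the resulting sum of fractions. There is nothing to add.
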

\begin{proof}
Assume \algo waits before starting the final schedule.
Then \Cref{lemma: Costs per Schedule} yields the claimed bound:
\begin{align*}
\algoS &\myoverset{(\ref{equation: Costs Algo})}{=}{35} \timeSch_N + L(\timeSch_N,\posSch_N,\sigma_{S_N})\\
&\myoverset{(\ref{equation: Starting Time No Wait})}{=}{35} \frac{1}{\Theta-1}L(\timeSch_N,0,\SLE{\timeSch_N})+L(\timeSch_N,\posSch_N,\sigma_{S_N})\\
&\myoverset{(\ref{equation: Schedule Less Than Opt})}{\leq}{35} \frac{1}{\Theta-1}\opts+L(\timeSch_N,\posSch_N,\sigma_{S_N})\\
&\myoverset{Lem.~\ref{lemma: Costs per Schedule}}{\le}{35} \left(\frac{1}{\Theta-1}+1+\frac{\Theta-1}{\Theta+1}\right)\optS\\
&\myoverset{}{=}{35}\frac{2\Theta^2-\Theta+1}{\Theta^2-1}\optS.\qedhere
\end{align*}
\end{proof}

In comparison, the upper bound for the competitive ratio of \smartstart, in case \smartstart has a waiting period before starting the final schedule is $\frac{2\Theta^2+2\Theta}{\Theta^2+\Theta-2}\optS$ \cite[Proposition~3.2]{BirxDisser/19}.
Note that \algo's bound is better than \smartstart's bound for $\Theta>1$.

It remains to examine the case that the algorithm \algo has no waiting period before starting the final schedule. For this we use two lemmas from \cite{BirxDisser/19} originally proved for \smartstart, which are still valid for $\algo$ since they give bounds on the optimum offline schedules independently of the waiting routine.

By~$x_- := \min\{0,\min_{i=1,\dots,n} \{a_i\},\min_{i=1,\dots,n} \{b_i\}\}$ we denote the leftmost position and by $x_+ := \max \{ 0, \max_{i=1,\dots,n} \{a_i\}, \max_{i=1,\dots,n} \{b_i\}\}$ the rightmost position that needs to be visited by the server.
We denote by $y_-^{S_j}$ the leftmost and by $y_+^{S_j}$ the rightmost position that occurs in the requests $\sigma_{S_j}$. Note that $\smash{y_-^{S_j}}$ and $\smash{y_+^{S_j}}$ need not lie on different sides of the origin, in contrast to $x_{-/+}$.

\begin{lemma}[Lemma 3.4, Full Version of \cite{BirxDisser/19}]\label{lemma: Approx Schedule From Zero}
Let $S_j$ with $j\in\{1,\dots,N\}$ be a schedule of \algo. Moreover, let $\optS=|x_-|+x_++y$ for some $y\ge 0$. Then, we have
\begin{equation*}
L(\timeSch_j,0,\sigma_{S_j})\le |\min\{0,y^{S_j}_-\}|+\max\{0,y^{S_j}_+\}+y.
\end{equation*}
\end{lemma}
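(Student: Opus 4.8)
The goal is to bound $L(\timeSch_j,0,\sigma_{S_j})$ — the cost of an optimal schedule starting at the origin at time $\timeSch_j$ that serves exactly the requests $\sigma_{S_j}$ — by the quantity $|\min\{0,y^{S_j}_-\}|+\max\{0,y^{S_j}_+\}+y$, where $y = \optS - |x_-| - x_+$ measures the ``excess'' of \opt over the bare left-right sweep. The plan is to exhibit an explicit feasible schedule from the origin achieving this makespan, then invoke optimality of $L$.

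First I would set up notation: write $\ell := |\min\{0,y^{S_j}_-\}|$ for how far left of the origin the requests $\sigma_{S_j}$ reach, and $r := \max\{0,y^{S_j}_+\}$ for how far right. The schedule to construct starts at $0$, and the natural candidates are ``go left to $-\ell$, then right to $r$'' or ``go right to $r$, then left to $-\ell$'', possibly with extra back-and-forth to respect release times and capacity. The key point is that all requests in $\sigma_{S_j}$ have release time at most $\timeSch_j$ (they were available when \algo started $S_j$), so release times impose no constraint on a schedule that begins at time $\timeSch_j$ — that is exactly why the lemma only charges $y$ and not something release-time-dependent. This lets me reduce to an \emph{offline, no-release-time} Dial-a-Ride instance on the requests $\sigma_{S_j}$, living in the interval $[-\ell, r]$.

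Next I would relate an optimal such offline schedule to \opt's behavior. The idea is that \opt, in its global optimal tour of length $|x_-| + x_+ + y$, must at some point visit every source and destination of every request in $\sigma_{S_j}$, and in particular must collect and deliver each request in $\sigma_{S_j}$ in a manner consistent with the capacity constraint. One can extract from \opt's tour the ``sub-itinerary'' restricted to the sources/destinations in $\sigma_{S_j}$; this sub-itinerary serves $\sigma_{S_j}$ feasibly (capacity is only easier with fewer requests on board) and has length at most that of \opt's full tour minus the portions spent strictly outside $[-\ell, r]$ going to the ``other'' extreme $x_-$ or $x_+$. Making this precise — that deleting the excursions to the global extremes saves at least $(|x_-| - \ell) + (x_+ - r)$ — is the crux, and I expect it to be the main obstacle, since one has to argue carefully that the retained portion can be reparametrized into a valid schedule starting from $0$ (the origin lies in $[-\ell,r]$ whenever $\sigma_{S_j}$ straddles the origin, and when it doesn't, the bound degrades gracefully because $\ell$ or $r$ is $0$ and one simply travels to the near end first). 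Since this paper states the lemma is ``Lemma 3.4, Full Version of \cite{BirxDisser/19}'', I would lean on that reference for the detailed shortcutting argument rather than reproving it from scratch.

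Finally, having produced a feasible schedule from the origin serving $\sigma_{S_j}$ with makespan at most $\ell + r + y$, and having noted release times are irrelevant, I conclude $L(\timeSch_j,0,\sigma_{S_j}) \le \ell + r + y = |\min\{0,y^{S_j}_-\}|+\max\{0,y^{S_j}_+\}+y$, which is the claim. The one subtlety to double-check is the case distinction on whether $\sigma_{S_j}$ lies entirely on one side of the origin: if, say, $y^{S_j}_- \ge 0$, then $\ell = 0$ and the extracted sub-itinerary need not touch the origin, but since the server \emph{starts} at the origin we may need to add a segment of length $y^{S_j}_-$ to reach the leftmost relevant point — and this is exactly absorbed because $\min\{0,y^{S_j}_-\}$ being $0$ means the ``$+\max\{0,y^{S_j}_+\}$'' term already pays for the full sweep from $0$ to $y^{S_j}_+ \ge y^{S_j}_- \ge 0$. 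So the formula is tight in all cases, and no further calculation is needed beyond bookkeeping the four sign cases for $(y^{S_j}_-, y^{S_j}_+)$.
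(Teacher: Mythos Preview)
The paper does not give its own proof of this lemma; it simply quotes it from the full version of \cite{BirxDisser/19} with the remark that the statement carries over to \algo because it ``gives bounds on the optimum offline schedules independently of the waiting routine.'' Your outline---clip \opt's trajectory to the interval $I=[-\ell,r]$ with $\ell=|\min\{0,y^{S_j}_-\}|$ and $r=\max\{0,y^{S_j}_+\}$, observe that the clipped tour still serves all of $\sigma_{S_j}$ (dropping the other requests only eases the capacity constraint, and release times are irrelevant since all are at most $t_j$), and note that the portion of \opt's movement spent outside $I$ is at least $(|x_-|-\ell)+(x_+-r)$ because \opt must reach both $x_-$ and $x_+$ from a starting point in $I$---is precisely the standard shortcutting argument used in the cited reference, and the arithmetic $|x_-|+x_++y-(|x_-|-\ell)-(x_+-r)=\ell+r+y$ then gives the bound.

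One small remark: the case distinction in your last paragraph is unnecessary. By construction $\min\{0,y^{S_j}_-\}\le 0\le\max\{0,y^{S_j}_+\}$, so the origin always lies in $I$, and the clipped tour automatically starts at $0$ regardless of whether $\sigma_{S_j}$ straddles the origin. This does not affect the correctness of your argument.
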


\begin{lemma}[Lemma 3.6, Full Version of \cite{BirxDisser/19}]\label{lemma: Rightmost Position}
Let $S_j$ with $j\in\{1,\dots,N\}$ be a schedule of \algo. Moreover, let $|x_-|\le x_+$ and $\optS=|x_-|+x_++y$ for some $y\ge 0$. Then, for every point~$p$ that is visited by $S_j$ we have
\begin{equation*}
p\le |\posSch_j|+|\posSch_j-\posSch_{j+1}|+y-|\min\{0,y^{S_j}_-\}|.
\end{equation*}
\end{lemma}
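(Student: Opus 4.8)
The statement is \cite[Lemma~3.6, full version]{BirxDisser/19} transcribed into the notation of \algo, and, as observed just before it, the argument uses nothing about the waiting routine; so the plan is to reproduce that proof. First I would reduce to the rightmost point visited by $S_j$: on the line an optimal schedule never moves strictly beyond the smallest interval containing its start position and all positions occurring in the served requests, so every point visited by $S_j$ lies in $[\min\{\posSch_j,y^{S_j}_-\},\max\{\posSch_j,y^{S_j}_+\}]$, and it suffices to verify the bound for $p=\max\{\posSch_j,y^{S_j}_+\}$. Second, I would record the auxiliary inequality $y\ge|\min\{0,y^{S_j}_-\}|$: since $|x_-|\le x_+$, every tour starting at the origin that visits both $x_-$ and $x_+$ has length at least $2|x_-|+x_+$, so $\optS=|x_-|+x_++y\ge 2|x_-|+x_+$, i.e.\ $y\ge|x_-|\ge|\min\{0,y^{S_j}_-\}|$. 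This settles the subcases $p=\posSch_j$ and $p\le 0$ at once, since then $p\le|\posSch_j|$ while the right-hand side is $\ge|\posSch_j|$.

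For the remaining case $p=y^{S_j}_+>\max\{0,\posSch_j\}$, I would sandwich the length of $S_j$. From below: $S_j$ starts at $\posSch_j$, reaches its leftmost point $\min\{\posSch_j,y^{S_j}_-\}$ and its rightmost point $p$, and ends at $\posSch_{j+1}$, so distinguishing which of these two extremes it reaches first gives $L(\timeSch_j,\posSch_j,\sigma_{S_j})\ge 2\bigl(p-\min\{\posSch_j,y^{S_j}_-\}\bigr)-|\posSch_j-\posSch_{j+1}|$. From above: the triangle inequality~(\ref{equation: Schedule Triangule Eq}) with $\posSch'=0$ together with \Cref{lemma: Approx Schedule From Zero} yields $L(\timeSch_j,\posSch_j,\sigma_{S_j})\le|\posSch_j|+L(\timeSch_j,0,\sigma_{S_j})\le|\posSch_j|+|\min\{0,y^{S_j}_-\}|+y^{S_j}_++y$. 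Chaining the two bounds, subtracting $y^{S_j}_+$ and rearranging yields exactly the claimed inequality, \emph{provided} $\min\{\posSch_j,y^{S_j}_-\}\le-|\min\{0,y^{S_j}_-\}|$; this holds automatically when $y^{S_j}_-<0$ (then the left-hand side is $\le y^{S_j}_-$) and when $\posSch_j\le 0$.

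The single remaining subcase — $\posSch_j>0$ and $y^{S_j}_->0$, so both $\posSch_j$ and all of $\sigma_{S_j}$ lie strictly to the right of the origin — is where the generic estimates above are too lossy, and it is the technical heart of the proof. There the detour through the origin in the upper bound is wasteful and must be replaced by a one-sided refinement of \Cref{lemma: Approx Schedule From Zero} that bounds the optimal schedule for $\sigma_{S_j}$ \emph{from its leftmost request position} by roughly the span $y^{S_j}_+-y^{S_j}_-$ plus the slack $y$; plugging this into the same lower bound closes the gap. The intuition is that any genuine back-and-forth of $S_j$ inside $[y^{S_j}_-,y^{S_j}_+]$ is forced by release times and is therefore already charged to $\opt$ via the $y$ term, whereas an essentially rightward $S_j$ must end near $p$, so that $|\posSch_j-\posSch_{j+1}|$ absorbs the difference. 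Making this dichotomy precise, exactly as in \cite{BirxDisser/19}, is the main obstacle; the remaining steps are routine.
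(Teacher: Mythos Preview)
The paper does not give its own proof of this lemma: it is quoted from \cite[Lemma~3.6, full version]{BirxDisser/19} with only the remark that the argument is independent of the waiting routine. So there is nothing in-paper to compare against beyond that citation, and your plan --- reproduce the external proof --- is the appropriate one.

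The parts you actually carry out are correct: the reduction to $p=\max\{\posSch_j,y^{S_j}_+\}$ (an optimal schedule on the line for requests all released by its start time never leaves the convex hull of the start point and the request positions), the auxiliary inequality $y\ge|\min\{0,y^{S_j}_-\}|$, and the sandwiching argument that closes every case with $\min\{\posSch_j,y^{S_j}_-\}\le\min\{0,y^{S_j}_-\}$. These steps match the structure of the original argument.

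The gap is exactly where you say it is. In the remaining subcase $\posSch_j>0$ and $y^{S_j}_->0$, your chained inequality yields only
\[
p\le|\posSch_j|+|\posSch_j-\posSch_{j+1}|+y+2\min\{\posSch_j,y^{S_j}_-\},
\]
overshooting the target by the strictly positive quantity $2\min\{\posSch_j,y^{S_j}_-\}$. Your final paragraph does not close this: it names an ingredient (``a one-sided refinement of \Cref{lemma: Approx Schedule From Zero}'') and a dichotomy between ``genuine back-and-forth'' and ``essentially rightward'' schedules, and then defers the details to \cite{BirxDisser/19}. That is a pointer, not a proof. If your goal is a self-contained argument, you still owe the precise refined upper bound on $L(\timeSch_j,\posSch_j,\sigma_{S_j})$ in this one-sided situation and its derivation from \opt's tour; if your goal is only to match the present paper, you have already exceeded it, since the paper simply cites the lemma and moves on.
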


Using the bounds established by \Cref{lemma: Approx Schedule From Zero} and \Cref{lemma: Rightmost Position}, we can give an upper bound for the competitive ratio of \algo if the server is not waiting before starting the final schedule.

\begin{proposition}\label{proposition: Upper Bound No Waiting}
If \algo does not wait before executing~$S_{N}$, we have
\begin{equation*}
\frac{\algo(\sigma)}{\opt(\sigma)} \le f_2(\Theta) := \frac{3\Theta^2+3}{2\Theta+1}.
\end{equation*}
\end{proposition}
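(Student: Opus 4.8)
We start from the identity $\algoS=\timeSch_N+L(\timeSch_N,\posSch_N,\sigma_{S_N})$ of \eqref{equation: Costs Algo}. Since \algo does not wait before $S_N$, \eqref{equation: Starting Time Wait} gives $\timeSch_N=\timeSch_{N-1}+L(\timeSch_{N-1},\posSch_{N-1},\sigma_{S_{N-1}})$, so that
\[
\algoS=\timeSch_{N-1}+L(\timeSch_{N-1},\posSch_{N-1},\sigma_{S_{N-1}})+L(\timeSch_N,\posSch_N,\sigma_{S_N}).
\]
Reflecting the whole instance about the origin if necessary, I would assume $|x_-|\le x_+$ and write $\optS=|x_-|+x_++y$ with $y\ge 0$; the goal is then to bound the right-hand side above by $f_2(\Theta)\cdot(|x_-|+x_++y)$.

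The crux is to control the time $\timeSch_N$ that \algo spends before its final schedule, since the naive estimate — bounding $\timeSch_{N-1}$ through the waiting routine and $L(\timeSch_{N-1},\posSch_{N-1},\sigma_{S_{N-1}})$ through \Cref{lemma: Costs per Schedule}, each after routing through the origin with \eqref{equation: Schedule Triangule Eq} — double-counts the cost of sweeping the far (right) side and only yields a ratio around $3.3$. Instead I would isolate the maximal suffix $S_m,S_{m+1},\dots,S_N$ of schedules that \algo runs back-to-back without any intermediate waiting. Then \algo is continuously moving on $[\timeSch_m,\algoS]$, so $\algoS=\timeSch_m+\sum_{j=m}^{N}L(\timeSch_j,\posSch_j,\sigma_{S_j})$, and $\timeSch_m\le\optS/(\Theta-1)$ because \algo does not start $S_m$ later than the time at which the waiting threshold in \eqref{equation: Algo Definition} is met, which by \eqref{equation: Schedule Less Than Opt} is at most $\optS/(\Theta-1)$ (exactly as in the proof of \Cref{proposition: Upper Bound Waiting}). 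It remains to bound the total distance $\sum_{j=m}^{N}L(\timeSch_j,\posSch_j,\sigma_{S_j})$ that \algo travels after its last waiting period.

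This is where \Cref{lemma: Approx Schedule From Zero} and \Cref{lemma: Rightmost Position} come in. Since $|x_-|\le x_+$, \Cref{lemma: Rightmost Position} bounds, for every schedule $S_j$, the rightmost point it visits by $|\posSch_j|+|\posSch_j-\posSch_{j+1}|+y-|\min\{0,y^{S_j}_-\}|$, and \Cref{lemma: Approx Schedule From Zero} bounds $L(\timeSch_j,0,\sigma_{S_j})$ — hence, via \eqref{equation: Schedule Triangule Eq}, also $L(\timeSch_j,\posSch_j,\sigma_{S_j})$ — by $|\min\{0,y^{S_j}_-\}|+\max\{0,y^{S_j}_+\}+y$, which absorbs all release-time slack into the single additive term $y$. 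Feeding in that each $\posSch_{j+1}$ is a delivery point, so that $|\posSch_{j+1}|\le(\Theta-1)\timeSch_j$ by \Cref{lemma: Lower Bound Starting Time}, and that \algo's trajectory never passes $x_+$ on the right nor $x_-$ on the left, I would collapse the sum $\sum_{j=m}^{N}L(\timeSch_j,\posSch_j,\sigma_{S_j})$ to an expression in $|x_-|$, $x_+$, $y$ and the signed start position $\posSch_N$ of the last schedule, in which the right-side sweep of length about $x_+$ is charged only once (plus the slack $y$) rather than once per schedule; in particular $L(\timeSch_N,\posSch_N,\sigma_{S_N})$ is then controlled by $|\posSch_N|$, $y$, and the part of $[x_-,x_+]$ still unserved at time $\timeSch_N$.

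Substituting these bounds back into $\algoS=\timeSch_m+\sum_{j=m}^{N}L(\timeSch_j,\posSch_j,\sigma_{S_j})$ leaves a linear-fractional expression in $|x_-|$, $x_+$, $y$ and $\posSch_N$, to be maximized subject to $0\le|x_-|\le x_+$, $\posSch_N\in[x_-,x_+]$, and the starting-time constraints of \Cref{lemma: Lower Bound Starting Time}; a short case distinction on the sign of $\posSch_N$ and on whether the final schedule extends farther to the left or to the right then finishes the estimate, and the maximum should come out as exactly $f_2(\Theta)=(3\Theta^2+3)/(2\Theta+1)$, the extremal configuration identifying a tight instance. I expect the main obstacle to be precisely this bookkeeping for the busy suffix: one has to apply \eqref{equation: Schedule Triangule Eq}, \Cref{lemma: Approx Schedule From Zero} and \Cref{lemma: Rightmost Position} in exactly the right order so that the $\approx 2x_+$ which both the term $\timeSch_N$ and the term $L(\timeSch_N,\posSch_N,\sigma_{S_N})$ would otherwise each pay for sweeping the long side is counted once, and then to verify that none of the cases of the resulting optimization exceeds $f_2(\Theta)$.
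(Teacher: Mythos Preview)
Your plan has a genuine gap at the step you yourself flag as the crux: ``collapsing'' the sum $\sum_{j=m}^{N}L(\timeSch_j,\posSch_j,\sigma_{S_j})$ over the maximal busy suffix. Nothing bounds the length $N-m+1$ of that suffix, and the tools you propose (\Cref{lemma: Approx Schedule From Zero}, \Cref{lemma: Rightmost Position}, \eqref{equation: Schedule Triangule Eq}) only control a \emph{single} schedule; applying them termwise introduces an additive $y$ and a $|\min\{0,y^{S_j}_-\}|+\max\{0,y^{S_j}_+\}$ for each $j$, with no telescoping across $j$. Consequently the sum can exceed any fixed multiple of $\optS$, and the optimization you sketch never gets off the ground. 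The bound $\timeSch_m\le\optS/(\Theta-1)$ is fine, but it leaves far too much work for the travel term.

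The paper avoids summing over a suffix altogether and works only with $S_{N-1}$ and $S_N$. The missing idea is to compare against \opt through the \emph{first request of $\sigma_{S_N}$ that \opt collects}, say with starting position $\posReq^\opt_N$ and release time $\timeReq^\opt_N$. Since this request is in $\sigma_{S_N}$ one has $\timeReq^\opt_N>\timeSch_{N-1}$, and since \opt serves all of $\sigma_{S_N}$ starting from $\posReq^\opt_N$ no earlier than $\timeReq^\opt_N$, one gets $\optS\ge \timeReq^\opt_N+L(\timeReq^\opt_N,\posReq^\opt_N,\sigma_{S_N})$. Combining these with the triangle inequality cancels $\timeSch_{N-1}$ and reduces $\algoS$ to $L(\timeSch_{N-1},\posSch_{N-1},\sigma_{S_{N-1}})+|\posReq^\opt_N-\posSch_N|+\optS$. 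A second lower bound, $\optS\ge \timeSch_{N-2}+|\posReq^\opt_N-\posSch_N|$ (because \opt must visit both $\posReq^\opt_N$ and $\posSch_N$ after time $\timeSch_{N-2}$), together with \Cref{lemma: Lower Bound Starting Time} then handles $|\posSch_{N-1}|$. Only at this point, and only for schedule $S_{N-1}$, do \Cref{lemma: Approx Schedule From Zero} and \Cref{lemma: Rightmost Position} enter, under the extra assumption $|\posReq^\opt_N-\posSch_N|>\bigl(1-\tfrac{\Theta-1}{2\Theta+1}\bigr)\optS$ (the complementary case being immediate). Your geometric intuition about ``charging the right sweep only once'' is correct in spirit, but it is realized through this \opt-anchored cancellation of $\timeSch_{N-1}$, not through bounding a long busy suffix.
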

\begin{proof}
Assume algorithm \algo does not have a waiting period before the last schedule, i.e., \algo starts the final schedule $S_N$ immediately after finishing $S_{N-1}$. Without loss of generality, we assume $|x_-|\le x_+$ throughout the entire proof by symmetry.

First of all, we notice that we may assume that \algo executes at least two schedules in this case. Otherwise either the only schedule has length $0$, which would imply~$\optS=\algoS=0$, or the only schedule would have a positive length, implying a waiting period. Let~$\sigma^{\opt}_{S_N}$ be the first request of~$\sigma_{S_N}$ that is served by \opt and let~$\posReq^\opt_N$ be its starting point and~$\timeReq^\opt_N$ be its release time. We have
\begin{align}
\algoS &\myoverset{(\ref{equation: Costs Algo})}{=}{30}\timeSch_N+L(\timeSch_N,\posSch_N,\sigma_{S_N})\nonumber\\
&\myoverset{(\ref{equation: Starting Time Wait})}{=}{30}\timeSch_{N-1}+L(\timeSch_{N-1},\posSch_{N-1},\sigma_{S_{N-1}})+L(\timeSch_N,\posSch_N,\sigma_{S_N})\nonumber\\
&\myoverset{$\timeSch_N\ge \timeReq^\opt_N$}{\le}{30} \timeSch_{N-1}+L(\timeSch_{N-1},\posSch_{N-1},\sigma_{S_{N-1}})+L(\timeReq^\opt_N,\posSch_N,\sigma_{S_N}).\label{equation: First Approx Upper Bound No Wait}
\end{align}
Since \opt serves all requests of $\sigma_{S_N}$ after time $\timeReq^\opt_N$, starting with a request with starting point~$\posReq^\opt_N$, we also have
\begin{equation}
\optS\ge \timeReq^\opt_N+L(\timeReq^\opt_N,\posReq^\opt_N,\sigma_{S_N}).\label{equation: First Approx Opt No Wait}
\end{equation}
Furthermore, we have
\begin{equation}
\timeReq^\opt_N>\timeSch_{N-1}\label{equation: Time First Request Last Schedule Opt}
\end{equation}
since otherwise $\sigma^{\opt}_{S_N}\in\sigma_{S_{N-1}}$ would hold. This gives us
\begin{align}
\algoS &\myoverset{(\ref{equation: First Approx Upper Bound No Wait})}{\le}{24}\timeSch_{N-1}+L(\timeSch_{N-1},\posSch_{N-1},\sigma_{S_{N-1}})+L(\timeReq^\opt_N,\posSch_N,\sigma_{S_N})\nonumber\\
&\myoverset{(\ref{equation: Schedule Triangule Eq})}{\le}{24}\timeSch_{N-1}+L(\timeSch_{N-1},\posSch_{N-1},\sigma_{S_{N-1}})\nonumber+|\posReq^\opt_N-\posSch_N|\nonumber\\
&\qquad\qquad+L(\timeReq^\opt_N,\posReq^\opt_N,\sigma_{S_N})\nonumber\\
&\myoverset{(\ref{equation: First Approx Opt No Wait})}{\le}{24}\timeSch_{N-1}+L(\timeSch_{N-1},\posSch_{N-1},\sigma_{S_{N-1}})+|\posReq^\opt_N-\posSch_N|\nonumber\\
&\qquad\qquad+\optS-\timeReq^\opt_N\nonumber\\
&\myoverset{(\ref{equation: Time First Request Last Schedule Opt})}{<}{24}L(\timeSch_{N-1},\posSch_{N-1},\sigma_{S_{N-1}})+|\posReq^\opt_N-\posSch_N|+\optS\label{equation: Kevin}\\
&\myoverset{(\ref{equation: Schedule Triangule Eq})}{\le}{24}|\posSch_{N-1}|+L(\timeSch_{N-1},0,\sigma_{S_{N-1}})+|\posReq^\opt_N-\posSch_N|+\optS\nonumber\\
&\myoverset{\text{Lem.~}\ref{lemma: Lower Bound Starting Time}}{\le}{24}(\Theta-1)\timeSch_{N-2}+L(\timeSch_{N-1},0,\sigma_{S_{N-1}})\nonumber\\
&\qquad\qquad+|\posReq^\opt_N-\posSch_N|+\optS.\label{equation: Second Approx Upper Bound No Wait}
\end{align}
We have
\begin{equation}\label{equation: Approx Opt No Wait Both Cases}
\optS\ge \timeSch_{N-2}+|\posReq^\opt_N-\posSch_N|,
\end{equation}
because \opt has to visit both $\posReq^\opt_N$ and $\posSch_N$ after time $\timeSch_{N-2}$: It has to visit $\posReq^\opt_N$ to collect $\sigma^{\opt}_{S_N}$ and it has to visit $\posSch_N$ to deliver some request of $\sigma_{S_{N-1}}$.
Using the above inequalitiy, we get
\begin{align}
\algoS &\myoverset{(\ref{equation: Second Approx Upper Bound No Wait})}{<}{15}(\Theta-1)\timeSch_{N-2}+L(\timeSch_{N-1},0,\sigma_{S_{N-1}})\nonumber\\
&\qquad+|\posReq^\opt_N-\posSch_N|+\optS\nonumber\\
&\myoverset{(\ref{equation: Approx Opt No Wait Both Cases})}{\le}{15}2\opts+L(\timeSch_{N-1},0,\sigma_{S_{N-1}})+(\Theta-2)\timeSch_{N-2}\label{equation: Bound One Of Two}.
\end{align}
In the case $\Theta\ge 2$, we have
\begin{align*}
\algoS &\myoverset{(\ref{equation: Bound One Of Two})}{<}{25}2\opts+L(\timeSch_{N-1},0,\sigma_{S_{N-1}})+(\Theta-2)\timeSch_{N-2}\\
&\myoverset{(\ref{equation: Schedule Less Than Opt})}{\le}{25}(\Theta+1)\opts\\
&\myoverset{$\Theta\ge 2$}{\leq}{25}\frac{3\Theta^2+3}{2\Theta+1}\opts.
\end{align*}
Thus, we may assume $\Theta<2$. Similarly as in inequality (\ref{equation: Bound One Of Two}), we get
\begin{align}
\algoS &\myoverset{(\ref{equation: Second Approx Upper Bound No Wait})}{<}{15}(\Theta-1)\timeSch_{N-2}+L(\timeSch_{N-1},0,\sigma_{S_{N-1}})\nonumber\\
&\qquad+|\posReq^\opt_N-\posSch_N|+\optS\nonumber\\
&\myoverset{(\ref{equation: Approx Opt No Wait Both Cases})}{\le}{15}\Theta\opts+L(\timeSch_{N-1},0,\sigma_{S_{N-1}})+(2-\Theta)|\posReq^\opt_N-\posSch_N|\nonumber\\
&\myoverset{(\ref{equation: Algo Definition})}{\le}{15}\Theta\opts+(\Theta-1)\timeSch_{N-1}+(2-\Theta)|\posReq^\opt_N-\posSch_N|\nonumber\\
&\myoverset{}{\le}{15}(2\Theta-1)\opts+(2-\Theta)|\posReq^\opt_N-\posSch_N|,\label{equation: Bound Two Of Two}
\end{align}
where the last inequality follows, because there exists a request in $\sigma$ with release date later than $t_{N-1}$.
This means the claim is shown if we have
\begin{equation}\label{equation: Goal}
|\posSch_N-\posReq_N^\opt|\leq \optS-\frac{\Theta-1}{2\Theta+1}\optS
\end{equation}
since then we have 
\begin{align*}
\algoS &\myoverset{(\ref{equation: Bound Two Of Two})}{<}{15}(2\Theta-1)\opts+(2-\Theta)|\posReq^\opt_N-\posSch_N|\\
&\myoverset{(\ref{equation: Goal})}{\leq}{15}(2\Theta-1)\opts+(2-\Theta)\left(1-\frac{\Theta-1}{2\Theta+1}\right)\opts\\
&\myoverset{}{=}{15}\frac{3\Theta^2+3}{2\Theta+1}\opts.
\end{align*}
Therefore, we may assume in the following that
\begin{equation}
|\posSch_N-\posReq_N^\opt|>\optS-\frac{\Theta-1}{2\Theta+1}\optS.\label{equation: Distance Of Starting Points Assumption}
\end{equation}
Let $\optS=|x_-|+x_++y$ for some $y\ge 0$. By definition of $x_-$ and $x_+$ we have
\begin{equation}
|\posSch_N-\posReq_N^\opt|+y\le \optS.\label{equation: Distance Of Starting Points Approx One}
\end{equation}
In the case that \opt visits position $\posSch_N$ before it collects $\sigma^{\opt}_{S_N}$, we have
\begin{equation}\label{equation: Distance Of Starting Points Approx Two}
|\posReq^\opt_N-\posSch_N|+|\posSch_N|\le \opts.
\end{equation}
Similarly, if \opt collects $\sigma^{\opt}_{S_N}$ before it visits position $\posSch_N$ for the first time, we have
\begin{align*}
\optS&\myoverset{}{\ge}{25} \timeReq^\opt_N+|\posReq^\opt_N-\posSch_N|\\ 
&\myoverset{(\ref{equation: Time First Request Last Schedule Opt})}{>}{25} \timeSch_{N-1}+|\posReq^\opt_N-\posSch_N|\\
&\myoverset{Lem.~\ref{lemma: Lower Bound Starting Time}}{\ge}{25} \frac{|\posSch_{N}|}{\Theta-1}+|\posReq^\opt_N-\posSch_N|\\
&\myoverset{$\Theta<2$}{\ge}{25} |\posSch_{N}|+|\posReq^\opt_N-\posSch_N|.
\end{align*}
Thus, inequality (\ref{equation: Distance Of Starting Points Approx Two}) holds in general.
To sum it up, we may assume that
\begin{equation}
\max\{y,|\posSch_N|,\timeSch_{N-2}\}\overset{(\ref{equation: Distance Of Starting Points Assumption}),(\ref{equation: Distance Of Starting Points Approx One}),(\ref{equation: Distance Of Starting Points Approx Two}),(\ref{equation: Approx Opt No Wait Both Cases})}<\frac{\Theta-1}{2\Theta+1}\optS\label{equation: Central Approx Upper Bound No Wait}
\end{equation}
holds. In the following, denote by $y^{S_{N-1}}_-$ the leftmost starting or ending point and by $y^{S_{N-1}}_+$ the rightmost starting or ending point of the requests in $\sigma_{S_{N-1}}$. We compute
\begin{align}
\algoS &\myoverset{(\ref{equation: Kevin})}{<}{25} L(\timeSch_{N-1},\posSch_{N-1},\sigma_{S_{N-1}})+|\posSch_N-\posReq_N^\opt|+\optS\nonumber\\
&\myoverset{(\ref{equation: Distance Of Starting Points Approx Two})}{<}{25} L(\timeSch_{N-1},\posSch_{N-1},\sigma_{S_{N-1}})+2\optS-|\posSch_N|\nonumber\\
&\myoverset{(\ref{equation: Schedule Time})}{\le}{25} |\posSch_{N-1}|+L(\timeSch_{N-1},0,\sigma_{S_{N-1}})+2\optS-|\posSch_N|\nonumber\\
&\myoverset{Lem.~\ref{lemma: Lower Bound Starting Time}}{\le}{25} (\Theta-1) \timeSch_{N-2}+L(\timeSch_{N-1},0,\sigma_{S_{N-1}})+2\optS-|\posSch_N|\nonumber\\
&\myoverset{Lem.~\ref{lemma: Approx Schedule From Zero}}{\le}{25} (\Theta-1) \timeSch_{N-2}+\max\{0,|y^{S_{N-1}}_-|\}+\max\{0,y^{S_{N-1}}_+\}\nonumber\\
&\qquad\qquad+y+2\optS-|\posSch_N|.\label{equation: Fourth Approx Upper Bound No Wait}
\end{align}
Obviously, position $y^{S_{N-1}}_+$ is visited by \algo in schedule $S_{N-1}$. Therefore, $y^{S_{N-1}}_+$ is smaller than or equal to the rightmost point that is visited by \algo during schedule~$S_{N-1}$, which gives us
\begin{equation}
y^{S_{N-1}}_+\overset{\text{Lem.~}\ref{lemma: Rightmost Position}}{\le} |\posSch_{N-1}|+|\posSch_{N-1}-\posSch_N|+y-\max\{0,|y^{S_{N-1}}_-|\}.\label{equation: Upper Bound Rightmost Point}
\end{equation}
On the other hand, because of $|x_-|\le x_+$, we have $\optS\ge 2|x_-|+x_+$, which implies $y\ge |x_-|$. By definition of $x_-$ and $y^{S_{N-1}}_-$, we have $|x_-|\ge \max\{0,|y^{S_{N-1}}_-|\}$. This gives us~$y\ge\max\{0,|y^{S_{N-1}}_-|\}$ and
\begin{equation}
0\le |\posSch_{N-1}|+|\posSch_{N-1}-\posSch_N|+y-\max\{0,|y^{S_{N-1}}_-|\}.\label{equation: Approx Distance Starting Points Last Schedules}
\end{equation}
To sum it up, we have
\begin{equation}
\max\{0,y^{S_{N-1}}_+\}\overset{(\ref{equation: Upper Bound Rightmost Point}),(\ref{equation: Approx Distance Starting Points Last Schedules})}{\le} |\posSch_{N-1}|+|\posSch_{N-1}-\posSch_N|+y-\max\{0,|y^{S_{N-1}}_-|\}.\label{equation: Upper Bound Rightmost Point Or Zero}
\end{equation}
The inequality above gives us
\begin{align*}
\algoS &\myoverset{(\ref{equation: Fourth Approx Upper Bound No Wait})}{<}{25} (\Theta-1) \timeSch_{N-2}+\max\{0,|y^{S_{N-1}}_-|\}+\max\{0,y^{S_{N-1}}_+\}\\
&\qquad\qquad+y+2\optS-|\posSch_N|\\
&\myoverset{(\ref{equation: Upper Bound Rightmost Point Or Zero})}{\le}{25} (\Theta-1) \timeSch_{N-2}+|\posSch_{N-1}|+|\posSch_{N-1}-\posSch_N|+2y\\
&\qquad\qquad+2\optS-|\posSch_N|\\
&\myoverset{}{\le}{25} (\Theta-1) \timeSch_{N-2}+|\posSch_{N-1}|+|\posSch_{N-1}|+|\posSch_N|+2y\\
&\qquad\qquad+2\optS-|\posSch_N|\\
&\myoverset{Lem.~\ref{lemma: Lower Bound Starting Time}}{\le}{25} (\Theta-1) \timeSch_{N-2}+2(\Theta-1) \timeSch_{N-2}+2y+2\optS\\
&\myoverset{(\ref{equation: Central Approx Upper Bound No Wait})}{\le}{25} (3\Theta-3)\frac{\Theta-1}{2\Theta+1}\optS+2\frac{\Theta-1}{2\Theta+1}\optS\\
&\qquad\qquad+2\optS\\
&\myoverset{}{=}{25} \frac{3\Theta^2+3}{2\Theta+1}\optS.\qedhere
\end{align*}
\end{proof}

In comparison, the upper bound for the competitive ratio of \smartstart in case it does not have a waiting period before starting the final schedule is $\Theta+1-\frac{\Theta-1}{3\Theta+3}\optS$ \cite[Proposition 3.4]{BirxDisser/19}. Note that \algo's bound is slightly worse than \smartstart's bound for $\Theta>1.47$. However, in combination with the bound of \Cref{proposition: Upper Bound Waiting}, \algo has a better worst-case than \smartstart.

\begin{theorem}\label{theorem: General Upper Bound}
Let $\Theta^*$ be the largest solution of $f_1(\Theta) = f_2(\Theta)$, i.e.,
\begin{equation*}
\frac{3\Theta^{*2}+3}{2\Theta^*+1}=\frac{2\Theta^{*2}-\Theta^*+1}{\Theta^{*2}-1}.
\end{equation*}
Then, $\algo_{\Theta^*}$ is $\rho^*$-competitive with $\rho^* := f_1(\Theta^*) = f_2(\Theta^*) \approx 2.6662$.
\end{theorem}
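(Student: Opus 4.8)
The plan is to combine the two case analyses already established and then optimize over the scaling parameter~$\Theta$. Fix $\Theta>1$ and an instance $\sigma$ with $\opt(\sigma)>0$. On $\sigma$, algorithm $\algo_{\Theta}$ either waits before starting its final schedule $S_N$ or it does not, so Propositions~\ref{proposition: Upper Bound Waiting} and~\ref{proposition: Upper Bound No Waiting} together yield
\begin{equation*}
\frac{\algo_{\Theta}(\sigma)}{\opt(\sigma)}\le\max\{f_1(\Theta),f_2(\Theta)\}.
\end{equation*}
(When $\opt(\sigma)=0$ we also have $\algo_{\Theta}(\sigma)=0$, as observed in the proof of Proposition~\ref{proposition: Upper Bound No Waiting}, so this case is harmless.) Hence $\algo_{\Theta}$ is $\max\{f_1(\Theta),f_2(\Theta)\}$-competitive for every $\Theta>1$, and it remains to show that this maximum, as a function of $\Theta$, is minimized at the largest root~$\Theta^*$ of $f_1=f_2$ and that the minimum value is $\rho^*=f_1(\Theta^*)=f_2(\Theta^*)\approx 2.6662$.

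First I would pin down $\Theta^*$. Clearing denominators in $f_1(\Theta)=f_2(\Theta)$, i.e.\ in $(3\Theta^2+3)(\Theta^2-1)=(2\Theta^2-\Theta+1)(2\Theta+1)$, reduces it to the quartic $3\Theta^4-4\Theta^3-\Theta-4=0$. A short derivative/convexity check (the second derivative $12\Theta(3\Theta-2)$ is positive for $\Theta>\tfrac23$, and the first derivative changes sign once in $(1,\tfrac{11}{10})$) shows that this quartic, which is negative at $\Theta=1$ and positive at $\Theta=2$, has exactly one root in $(1,\infty)$, and a sign check at a negative argument reveals its only other real root lies in $(-1,0)$; thus $\Theta^*\in(1,\infty)$ is genuinely the largest real root, and in particular $\Theta^*>1$ so $\algo_{\Theta^*}$ is well-defined. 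Next I would record the monotonicity of the two curves on $(1,\infty)$: a direct computation gives $f_1'(\Theta)=(\Theta^2-6\Theta+1)/(\Theta^2-1)^2$, which is negative on $(1,3+2\sqrt{2})$, and $f_2'(\Theta)=6(\Theta^2+\Theta-1)/(2\Theta+1)^2$, which is positive for all $\Theta>1$; since $\Theta^*\approx1.71<3+2\sqrt{2}$, the function $f_1$ is strictly decreasing on $(1,\Theta^*]$ and $f_2$ is strictly increasing on $[\Theta^*,\infty)$.

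Finally I would assemble these facts. As $\Theta\downarrow1$ we have $f_1(\Theta)\to\infty$ while $f_2(\Theta)\to 2$, so $f_1>f_2$ just to the right of~$1$; since $f_1-f_2$ is continuous and has no zero in the open interval $(1,\Theta^*)$, it stays positive there, and symmetrically $f_1<f_2$ on $(\Theta^*,\infty)$. Consequently $\max\{f_1,f_2\}$ coincides with $f_1$ on $(1,\Theta^*]$ and with $f_2$ on $[\Theta^*,\infty)$, hence it is strictly decreasing on $(1,\Theta^*]$ and strictly increasing on $[\Theta^*,\infty)$, so its unique global minimum over $(1,\infty)$ is attained at $\Theta^*$ with value $f_1(\Theta^*)=f_2(\Theta^*)$. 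Evaluating this common value numerically (using the root $\Theta^*$ of the quartic above) gives $\rho^*\approx2.6662$, and choosing the parameter $\Theta=\Theta^*$ in the competitiveness bound of the first paragraph completes the proof.

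The technical content here is light: once Propositions~\ref{proposition: Upper Bound Waiting} and~\ref{proposition: Upper Bound No Waiting} are in hand, everything reduces to elementary calculus with two rational functions. The only point that needs genuine care is confirming that the crossing of $f_1$ and $f_2$ at $\Theta^*$ is the global minimum of their maximum rather than merely a local coincidence; this is precisely why one must verify that $f_1$ is still on its decreasing branch at $\Theta^*$ (i.e.\ $\Theta^*<3+2\sqrt{2}$, since $f_1$ eventually increases again) and that $\Theta^*$ is indeed the largest real root of the quartic.
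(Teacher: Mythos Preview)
Your argument is correct and follows the same route as the paper: apply Propositions~\ref{proposition: Upper Bound Waiting} and~\ref{proposition: Upper Bound No Waiting}, then optimize $\max\{f_1,f_2\}$ over $\Theta>1$ using the monotonicity of the two curves to locate the minimum at their crossing~$\Theta^*$. In fact you are slightly more careful than the paper, which asserts that $f_1$ is strictly decreasing for all $\Theta>1$; your observation that $f_1$ turns around at $3+2\sqrt{2}$ and your explicit check that $\Theta^*<3+2\sqrt{2}$ are what is actually needed.
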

\begin{proof}
For the case, where \algo does wait before starting the final schedule, we have established the upper bound
\begin{equation*}
\frac{\algoS}{\optS}\le\frac{2\Theta^2-\Theta+1}{\Theta^2-1}=f_1(\Theta)
\end{equation*}
in \Cref{proposition: Upper Bound Waiting} and for the case, where \algo starts the final schedule immediately after the second to final one, we have established the upper bound
\begin{equation*}
\frac{\algoS}{\optS}\le\frac{3\Theta^2+3}{2\Theta+1}=f_2(\Theta)
\end{equation*}
in \Cref{proposition: Upper Bound No Waiting}. Therefore, if it exists,
\begin{equation*}
\Theta^*=\argmin_{\Theta>1}\left\{\max\{f_1(\Theta),f_2(\Theta)\}\right\}
\end{equation*}
 is the parameter for \algo with the smallest upper bound. We note that $f_1$ is strictly decreasing for $\Theta>1$ and that $f_2$ is strictly increasing for $\Theta>1$. Therefore, if an intersection point of $f_1$ and $f_2$ that is larger than $1$ exists, then this is at $\Theta^*$. Indeed, the intersection point exists, which is the largest solution of
\begin{equation*}
\frac{3\Theta^2+3}{2\Theta+1}=\frac{2\Theta^2-\Theta+1}{\Theta^2-1}.
\end{equation*}
The resulting upper bound for the competitive ratio is
\begin{equation*}
\rho^*=f_1(\Theta^*)=f_2(\Theta^*)\approx 2.6662.\qedhere
\end{equation*}
\end{proof}

\subsection{Lower Bound for {\normalfont\algo}}\label{section: Lower Bound for smarterstarts competitive ratio}
We show the lower bound of \Cref{theorem: Main Theorem}.
In this section, we explicitly construct instances that demonstrate that the upper bounds given in the previous section are tight for certain ranges of $\Theta>1$, in particular for~$\Theta = \Theta^*$ (as in \Cref{theorem: General Upper Bound}). 
Further, we show that choices of $\Theta > 1$ different from $\Theta^*$ yield competitive ratios worse than~$\rho^*\approx 2.67$.
Together, this implies that $\rho^*$ is exactly the best possible competitive ratio for \algo.

\begin{proposition}\label{proposition: Lower Bound Waiting}
Let $1<\Theta<2$.
For every sufficiently small $\eps>0$, there is a set of requests~$\sigma$ such that \algo waits before starting the final schedule and such that the inequality
\begin{equation*}
\frac{\algoS}{\optS}\ge\frac{2\Theta^2-\Theta+1}{\Theta^2-1}-\eps
\end{equation*}
holds, i.e., the upper bound established in \Cref{proposition: Upper Bound Waiting} is tight for $\Theta\in(1,2)$.
\end{proposition}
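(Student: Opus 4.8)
The plan is to exhibit, for each fixed $\Theta\in(1,2)$, a request sequence on which every inequality used in the proof of \Cref{proposition: Upper Bound Waiting} — together with the inequalities of \Cref{lemma: Costs per Schedule} and \Cref{lemma: Lower Bound Starting Time} that feed into it — holds with equality up to an error tending to $0$ with $\eps$. After rescaling so that $\optS=1$, I want an instance with three features: (a) \algo starts its next-to-last schedule $S_{N-1}$ exactly at time $\tfrac1{\Theta+1}$ and $S_{N-1}$ ends at a point $\posSch_N$ with $|\posSch_N|=\tfrac{\Theta-1}{\Theta+1}$, so that $\timeSch_{N-1}=\tfrac{|\posSch_N|}{\Theta-1}$ (tightness of \Cref{lemma: Lower Bound Starting Time} for $j=N-1$); (b) after $S_{N-1}$, \algo is forced to wait until $\timeSch_N=\tfrac1{\Theta-1}$, which happens exactly when $L(\timeSch,0,\SLE{\timeSch})$ stays above $(\Theta-1)\timeSch$ for every $\timeSch<\tfrac1{\Theta-1}$ and equals $\optS=1$ at $\timeSch=\tfrac1{\Theta-1}$, so that $\timeSch_N=\tfrac1{\Theta-1}L(\timeSch_N,0,\SLE{\timeSch_N})=\tfrac1{\Theta-1}\optS$; and (c) the residual request set $\sigma_{S_N}$ is such that serving it from $\posSch_N$ costs exactly $\optS+|\posSch_N|=\tfrac{2\Theta}{\Theta+1}$, the maximiser appearing in \Cref{lemma: Costs per Schedule}. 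For (c) I need $\sigma_{S_N}$ to reach across the origin far enough onto the side opposite $\posSch_N$ that the cheapest schedule serving it from $\posSch_N$ is forced to run $\posSch_N\to 0\to(\text{far opposite point})$. Granting (a)--(c),
\[
\algoS=\timeSch_N+L(\timeSch_N,\posSch_N,\sigma_{S_N})=\Big(\tfrac1{\Theta-1}+\tfrac{2\Theta}{\Theta+1}\Big)\optS=\tfrac{2\Theta^2-\Theta+1}{\Theta^2-1}\,\optS ,
\]
which is the claimed bound.

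For the instance I would begin with one request on the right with coinciding source and destination that lures \algo: it makes \algo wait exactly until $\tfrac1{\Theta+1}$ and then run $S_{N-1}$, which ends at $\posSch_N$ at time $\tfrac{\Theta}{\Theta+1}$; here one checks $\tfrac{\Theta}{\Theta+1}<\tfrac1{\Theta-1}$, i.e.\ $\Theta^2-2\Theta-1<0$, which holds for $\Theta<1+\sqrt2$ and in particular on $(1,2)$, so that a genuine waiting phase before $S_N$ exists. The remaining requests are released from time $\tfrac1{\Theta+1}$ onward and placed along the trajectory of a fixed \emph{optimal} offline tour of length $1$ — a new request appears essentially where and when that tour visits it — so that \opt serves everything with no idle time and $\optS$ stays equal to $1$, whereas for \algo, sitting near $\posSch_N$, the quantity $L(\timeSch,0,\SLE{\timeSch})$ keeps growing and keeps the waiting condition active up to $\timeSch=\tfrac1{\Theta-1}$. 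The tour is chosen so that the requests \algo has \emph{not} served by then — exactly those whose positions \algo never visited during $S_{N-1}$ — straddle the origin with enough spread on the far side to realise (c); fixing $\posSch_N=\tfrac{\Theta-1}{\Theta+1}$ and the release dates then forces the elementary inequalities between $\Theta$, $|\posSch_N|$ and the request positions that turn each displayed step into an equality, and this is where $\Theta<2$ enters.

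The main obstacle is this simultaneous tuning, and in particular a genuine tension inside it: for \opt to stay at cost $\optS$ its tour must never idle, but on the line any tour visiting both sides of the origin must retrace an already-visited interval, during which the set of seen positions — hence $L(\timeSch,0,\SLE{\timeSch})$ — does not grow, so \algo would be tempted to stop waiting there. The construction has to arrange for this retracing to fall inside the window $[\tfrac1{\Theta+1},\tfrac{\Theta}{\Theta+1}]$ during which \algo is executing $S_{N-1}$ and not re-evaluating its waiting rule (which in turn forces the corresponding requests to be small and to be swept up by $S_{N-1}$), while still leaving the later requests both cheap for \opt and maximally expensive for \algo's final schedule. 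Balancing these competing demands — which positions to use and at which release dates — is the technical heart; the remaining work (replacing the continuous ``release stream'' by a fine enough discretisation, which costs only $O(\eps)$ in the ratio; checking that the stated values of $L(\cdot,\cdot,\cdot)$ respect all release times; and verifying the handful of elementary inequalities) is routine.
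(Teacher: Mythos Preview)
Your target parameters (a)--(c) are exactly the ones the paper hits (in your normalisation $\timeSch_{N-1}=\tfrac1{\Theta+1}$, $\posSch_N=\tfrac{\Theta-1}{\Theta+1}$, $\timeSch_N=\tfrac1{\Theta-1}$, $L(\timeSch_N,\posSch_N,\sigma_{S_N})=\tfrac{2\Theta}{\Theta+1}$), and the arithmetic $\tfrac1{\Theta-1}+\tfrac{2\Theta}{\Theta+1}=\tfrac{2\Theta^2-\Theta+1}{\Theta^2-1}$ is correct. The gap is in how you propose to realise~(c). If, as you write, $\sigma_{S_N}$ consists of TSP-type requests and the cheapest route from $\posSch_N$ is the one-way tour $\posSch_N\to 0\to(-d)$, its length is only $|\posSch_N|+d$; getting $|\posSch_N|+\optS$ would force $d=1$, but then \opt must visit both $-1$ and $\posSch_N$, so $\optS\ge 2|\posSch_N|+1>1$. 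More carefully: with TSP requests in $\sigma_{S_N}$ spanning $[-d_1,d_2]$ and $\optS=2d_1+d_2=1$, the final schedule from $\posSch_N$ has length $|\posSch_N|+1$ only when the route $\posSch_N\to -d_1\to d_2$ is no worse than $\posSch_N\to d_2\to -d_1$, i.e.\ $d_2-d_1\ge 2|\posSch_N|$, which forces $d_1\le\tfrac{3-\Theta}{3(\Theta+1)}<\tfrac1{\Theta+1}$. But the request at $-d_1$ must be released strictly after $\timeSch_{N-1}=\tfrac1{\Theta+1}$ to land in $\sigma_{S_N}$, so \opt (who reaches $-d_1$ at time $d_1$) would have to idle there for a non-$\eps$ amount, pushing $\optS$ above $1$. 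No ``release stream'' of TSP requests can resolve this; the tension you locate between \opt's retracing and $L$ growing is not the real obstacle.

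The paper's construction is far simpler and avoids any stream: it uses exactly two requests. After $\sigma_1=(1,1;0)$ (giving, in its scaling, $\timeSch_1=\tfrac1{\Theta-1}$ and $\posSch_2=1$), it releases a single \emph{transportation} request $\sigma_2=\bigl(-\tfrac1{\Theta-1}+\eps',\,1;\,\tfrac1{\Theta-1}+\eps'\bigr)$. Because $\sigma_2$ must be \emph{carried} back to $1=\posSch_2$, the final schedule is forced to be the round trip $\posSch_2\to(-\tfrac1{\Theta-1}+\eps')\to 1$, of length $2+\tfrac{2}{\Theta-1}-2\eps'=\optS+|\posSch_2|-O(\eps')$, which is exactly~(c). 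Since $\sigma_2$ is already present when $S_1$ ends, $L(t,0,\SLE{t})$ is constant thereafter and the waiting condition~(b) reduces to the single inequality $\tfrac{\Theta}{\Theta-1}<\tfrac{1}{\Theta-1}\bigl(\tfrac{2}{\Theta-1}+1-2\eps'\bigr)$, which holds for $\Theta<2$; no discretisation is needed. The transportation nature of $\sigma_2$ --- forcing the server to return to $\posSch_N$ after visiting the far left --- is the device you are missing.
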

\begin{proof}
Let $\eps>0$ with $\smash{\eps<\frac{\Theta}{\Theta+1}}$ and $\smash{\eps'=\frac{\Theta+1}{2\Theta}\eps}$. 
Let the request
\begin{equation*}
\sigma_1=(1,1;0)
\end{equation*}
appear. For all $t\ge 0$ we have $L(t,0,\{\sigma_1\})=1$. Thus, \algo starts its first schedule~$S_1$ at time $\smash{\timeSch_1=\frac{1}{\Theta-1}}$ and reaches position $\posSch_2=1$ at time $\smash{\frac{\Theta}{\Theta-1}}$.
Next, we let the second and final request
\begin{equation*}
\sigma_2=(-\frac{1}{\Theta-1}+\eps',1;\frac{1}{\Theta-1}+\eps')
\end{equation*}
appear. For $t\ge\frac{\Theta}{\Theta-1}$ we have 
\begin{align*}
L(t,0,\{\sigma_1,\sigma_2\})&=\left|0-\left(-\frac{1}{\Theta-1}+\eps'\right)\right|+\left|\left(-\frac{1}{\Theta-1}+\eps'\right)-1\right|\\
&=\frac{2}{\Theta-1}-2\eps'+1.
\end{align*}
Thus, the second and final schedule $S_2$ is not started before time 
\begin{equation*}
\frac{L\left(\frac{\Theta}{\Theta-1},0,\{\sigma_1,\sigma_2\}\right)}{\Theta-1}=\frac{2}{(\Theta-1)^2}+\frac{1-2\eps'}{\Theta-1}.
\end{equation*}
By assumption, we have $\Theta<2$ and $\eps<\frac{\Theta}{\Theta+1}$, i.e., $\eps'<\frac{1}{2}$, which implies that for the time~$\frac{\Theta}{\Theta-1}$, when \algo reaches position $\posSch_2=1$, the inequality
\begin{equation}
\frac{L(\frac{\Theta}{\Theta-1},0,\{\sigma_1,\sigma_2\})}{\Theta-1}=\frac{2}{(\Theta-1)^2}+\frac{1-2\eps'}{\Theta-1}\overset{\eps'<\frac{1}{2}}{>}\frac{2}{(\Theta-1)^2}\overset{1<\Theta<2}{>}\frac{\Theta}{\Theta-1}\label{equation: Lower Bound Approx Starting Time One}
\end{equation}
holds. (Note that inequality~(\ref{equation: Lower Bound Approx Starting Time One}) also holds for slightly larger $\Theta$ if we let $\eps\rightarrow 0$.) Because of inequality~(\ref{equation: Lower Bound Approx Starting Time One}), \algo has a waiting period and starts the schedule~$S_2$ at time
\begin{equation*}
\timeSch_2=\frac{L\left(\frac{\Theta}{\Theta-1},0,\{\sigma_1,\sigma_2\}\right)}{\Theta-1}=\frac{2}{(\Theta-1)^2}+\frac{1-2\eps'}{\Theta-1}.
\end{equation*}
Serving $\sigma_2$ from position $\posSch_2=1$ takes time 
\begin{align*}
L(\timeSch_2,\posSch_2,\{\sigma_2\})&=\left|1-\left(-\frac{1}{\Theta-1}+\eps'\right)\right|+\left|\left(-\frac{1}{\Theta-1}+\eps'\right)-1\right|\\
&=2+\frac{2}{\Theta-1}-2\eps'.
\end{align*}
To sum it up, we have
\begin{align*}
\algoS&=\timeSch_2+L(\timeSch_2,\posSch_2,\{\sigma_2\})\\
&=\frac{2}{(\Theta-1)^2}+\frac{1-2\eps'}{\Theta-1}+2+\frac{2}{\Theta-1}-2\eps'\\
&=\frac{2\Theta^2-\Theta+1}{(\Theta-1)^2}-2\eps'\frac{\Theta}{\Theta-1}.
\end{align*}
On the other hand, \opt goes from the origin to $-\frac{1}{\Theta-1}+\eps'$ to collect $\sigma_2$ at time $\frac{1}{\Theta-1}+\eps'$ (i.e., it has to wait for $2\eps'$ units of time after it reaches position $-\frac{1}{\Theta-1}+\eps'$). Then \opt goes straight to position $1$ delivering $\sigma_2$ and serving $\sigma_1$. Therefore, we have
\begin{equation*}
\opts=\left|0-\left(-\frac{1}{\Theta-1}+\eps'\right)\right|+2\eps'+\left|\left(-\frac{1}{\Theta-1}+\eps'\right)-1\right|=\frac{\Theta+1}{\Theta-1}.
\end{equation*}
Note, that \opt can do this even if the capacity is $c=1$, since $\sigma_2$ does not need to be carried over position $1$, where $\sigma_1$ appears. Since we have~$\smash{\eps'=\frac{\Theta+1}{2\Theta}\eps}$, we obtain
\begin{equation*}
\frac{\algoS}{\opts}=\frac{2\Theta^2-\Theta+1}{\Theta^2-1}-\frac{2\eps'\Theta}{\Theta+1}=\frac{2\Theta^2-\Theta+1}{\Theta^2-1}-\eps,
\end{equation*}
as claimed.
\end{proof}

\begin{proposition}\label{proposition: Lower Bound No Waiting}
Let $\frac{1}{2}(1+\sqrt{5})\le\Theta\le 2$. For every sufficiently small $\eps>0$ there is a set of requests~$\sigma$ such that \algo immediately starts $S_N$ after $S_{N-1}$ and such that
\begin{equation*}
\frac{\algoS}{\optS}\ge\frac{3\Theta^2+3}{2\Theta+1}-\eps,
\end{equation*}
i.e., the upper bound established in \Cref{proposition: Upper Bound No Waiting} is tight for $\Theta\in[\frac{1}{2}(1+\sqrt{5}),2]\approx[1.6180,2]$.
\end{proposition}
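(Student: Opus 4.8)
The plan is to exhibit, for each fixed $\Theta\in[\frac{1}{2}(1+\sqrt{5}),2]$ and each sufficiently small $\eps>0$, a request sequence on which \algo produces exactly three schedules $S_1,S_2,S_3$, does \emph{not} wait before $S_3$, and attains $\algoS/\optS\ge f_2(\Theta)-\eps$. Such an instance has to make every estimate used in the proof of \Cref{proposition: Upper Bound No Waiting} tight up to $O(\eps)$: it must realize $N=3$ with $\timeSch_{N-2}=\timeSch_1=\frac{\Theta-1}{2\Theta+1}\optS$ and with the \opt-slack $y=\frac{\Theta-1}{2\Theta+1}\optS$; it must place $\posSch_2$ and $\posSch_3$ on opposite sides of the origin with $|\posSch_2|=(\Theta-1)\timeSch_1$; and it must turn \Cref{lemma: Lower Bound Starting Time}, \Cref{lemma: Approx Schedule From Zero}, \Cref{lemma: Rightmost Position}, and inequality~\eqref{equation: Distance Of Starting Points Approx Two} into equalities. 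This is the analogue for \algo of the matching lower-bound instance for \smartstart from~\cite{BirxDisser/19}; the point of the proof is that the construction survives the replacement of the waiting rule~\eqref{equation: Smartstart Definition} by~\eqref{equation: Algo Definition}.

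I would release the requests in three groups. \emph{Phase~1}: a single request at position roughly $-\frac{(\Theta-1)^2}{2\Theta+1}\optS$ released at time~$0$; by~\eqref{equation: Algo Definition} \algo waits and starts $S_1$ at time $\timeSch_1=\frac{\Theta-1}{2\Theta+1}\optS$, ending $S_1$ at $\posSch_2$ with $|\posSch_2|=(\Theta-1)\timeSch_1$, so that \Cref{lemma: Lower Bound Starting Time} is tight for $S_1=S_{N-2}$. \emph{Phase~2}: a short batch of requests released the instant $S_1$ finishes and placed so that \algo, starting $S_2$ from $\posSch_2$, is forced to sweep across the origin out to the overall rightmost point and then back, ending at $\posSch_3>0$ with $|\posSch_2-\posSch_3|=|\posSch_2|+|\posSch_3|$; these positions are tuned so that the detour \algo incurs in $S_2$ equals exactly the slack $y=\frac{\Theta-1}{2\Theta+1}\optS$ that \opt needs anyway, which makes \Cref{lemma: Approx Schedule From Zero} and \Cref{lemma: Rightmost Position} tight for $S_2=S_{N-1}$. \emph{Phase~3}: a final request far to the left, released around the start time of $S_2$, whose pickup position $\posReq_3^\opt$ satisfies $|\posReq_3^\opt-\posSch_3|+|\posSch_3|=\optS$, which makes~\eqref{equation: Distance Of Starting Points Approx Two} tight, and whose release time is chosen so that the \opt-pickup time of $\sigma_{S_3}$ tends to $\timeSch_{N-1}$, making~\eqref{equation: Kevin} tight; as in \Cref{proposition: Lower Bound Waiting}, a parameter $\eps'$ proportional to $\eps$ is used to force \opt to wait briefly so that all of these become genuine equalities. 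One then checks that \algo indeed executes $S_1,S_2,S_3$ as described, evaluates $\algoS=\timeSch_3+L(\timeSch_3,\posSch_3,\sigma_{S_3})$ and $\optS$ (where \opt collects the two leftmost requests first and then sweeps to the right), and lets $\eps\to0$ to obtain $\algoS/\optS\to f_2(\Theta)$.

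The main obstacle is twofold. First, in contrast to~\eqref{equation: Smartstart Definition}, the waiting rule~\eqref{equation: Algo Definition} cannot be ``lured'' cheaply, so Phase~1 must invest a genuine far-away request, which also costs \opt; the lengths chosen in the three phases have to be balanced so that $\timeSch_{N-2}$ and $y$ each reach $\frac{\Theta-1}{2\Theta+1}\optS$ while keeping $\optS$ fixed, which determines the numeric choices essentially uniquely. Second, and this is where the hypothesis $\Theta\ge\frac{1}{2}(1+\sqrt{5})$ enters, one has to verify that \algo does \emph{not} wait before the final schedule, i.e., that $\timeSch_{N-1}+L(\timeSch_{N-1},\posSch_{N-1},\sigma_{S_{N-1}})\ge\frac{1}{\Theta-1}L(\timeSch_N,0,\SLE{\timeSch_N})$; for the instance produced here this reduces to a quadratic inequality in $\Theta$ that holds precisely on $[\frac{1}{2}(1+\sqrt{5}),2]$, which is why the statement is restricted to that range. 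The remaining verifications are direct, if lengthy, computations.
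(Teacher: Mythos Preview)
Your plan---three phases producing exactly three schedules, every estimate in the proof of \Cref{proposition: Upper Bound No Waiting} forced tight, and the no-wait condition before $S_3$ reducing to $\Theta(\Theta-1)\ge1$---is precisely the paper's approach, and you have correctly identified the targets $t_{N-2}\approx y\approx\frac{\Theta-1}{2\Theta+1}\optS$ and the role of $\eps'$. Two concrete details in your sketch, however, would make the construction fail as written.

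First, the Phase-2 requests cannot be released ``the instant $S_1$ finishes''; they must appear just after $t_1$. In the paper the opposite-side Phase-2 request sits at distance $\frac{1}{\Theta-1}$ from the origin and is released at time $\frac{1}{\Theta-1}+\eps'$, so that \opt, heading straight there, waits only $\eps'$; releasing it at $\Theta t_1$ instead adds a full unit to $\optS$ and drops the ratio well below $f_2(\Theta)$. Second, your route for $S_2$ is the wrong way round. An optimal schedule from $p_2<0$ over TSP-type requests contained in $[p_2,R]$ simply ends at $R$; it never turns back to an interior $p_3$. In the paper the second batch has \emph{two} requests: one on the \emph{same} side as $p_2$ but farther from the origin (at $2+\frac{1}{\Theta-1}-2\eps'$ in the paper's orientation), and one on the opposite side at what becomes $p_3$. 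The $2\eps'$ in the first position is exactly what breaks the tie so that the unique optimal $S_2$ first moves \emph{away} from the origin to the same-side request and only then crosses to $p_3$; this extra leg is what makes $L(t_{N-1},p_{N-1},\sigma_{S_{N-1}})$ long enough to tighten \Cref{lemma: Approx Schedule From Zero} and \Cref{lemma: Rightmost Position}. Correspondingly, \opt's tour is the reverse of what you wrote: it visits the close extreme (the $p_3$-side) first, waits $\eps'$, and only then sweeps to the far side where the Phase-3 request lies.
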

\begin{proof}
Let $\eps>0$ with $\eps<\frac{1}{4}(\frac{5\Theta^2-9\Theta+4}{2\Theta+1})$ (note that $\frac{5\Theta^2-9\Theta+4}{2\Theta+1}>0$ for $\Theta>1$) and $\eps'=\frac{2\Theta+1}{5\Theta^2-9\Theta+4}\eps$. Let the request 
\begin{equation*}
\sigma_1=(1,1;0)
\end{equation*}
appear. For all $t\ge 0$, we have $L(t,0,\{\sigma_1\})=1$. Thus, \algo starts its first schedule $S_1$ at time $\timeSch_1=\frac{1}{\Theta-1}$ and reaches position $\posSch_2=1$ at time $\frac{\Theta}{\Theta-1}$. Next we let two new requests
\begin{align*}
\sigma_2^{(1)}&=\left(2+\frac{1}{\Theta-1}-2\eps',2+\frac{1}{\Theta-1}-2\eps';\frac{1}{\Theta-1}+\eps'\right),\\
\sigma_2^{(2)}&=\left(-\frac{1}{\Theta-1},-\frac{1}{\Theta-1};\frac{1}{\Theta-1}+\eps'\right)
\end{align*}
appear. 
For $t\ge\frac{\Theta}{\Theta-1}$ we have 
\begin{align*}
L(t,0,\{\sigma_1,\sigma_2^{(1)},\sigma_2^{(2)}\})&=\left|0-\left(-\frac{1}{\Theta-1}\right)\right|+\eps'\\
&\qquad+\left|\left(-\frac{1}{\Theta-1}\right)-\left(2+\frac{1}{\Theta-1}-2\eps'\right)\right|\\
&=\frac{3}{\Theta-1}+2-\eps'.
\end{align*}
Thus, the second schedule $S_2$ is not started before time
\begin{equation*}
\frac{L\left(\frac{\Theta}{\Theta-1},0,\{\sigma_1,\sigma_2^{(1)},\sigma_2^{(2)}\}\right)}{\Theta-1}=\frac{3}{(\Theta-1)^2}+\frac{2-\eps'}{\Theta-1}.
\end{equation*}
By assumption, we have $\Theta<2$ a nd $\eps<\frac{1}{4}(\frac{5\Theta^2-9\Theta+4}{2\Theta+1})$, i.e., $\eps'<\frac{1}{4}$, which implies that for the time~$\frac{\Theta}{\Theta-1}$, when \algo reaches position $\posSch_2=1$, the inequality
\begin{equation}
\frac{L(\frac{\Theta}{\Theta-1},0,\{\sigma_1,\sigma_2^{(1)},\sigma_2^{(2)}\})}{\Theta-1}=\frac{3}{(\Theta-1)^2}+\frac{2-\eps'}{\Theta-1}\overset{\eps'<2}{>}\frac{3}{(\Theta-1)^2}\overset{\Theta<2}{>}\frac{\Theta}{\Theta-1}\label{equation: Lower Bound Approx Starting Time Two}
\end{equation}
holds. 
(Note that inequality~(\ref{equation: Lower Bound Approx Starting Time Two}) also holds for slightly larger $\Theta$ if we let $\eps\rightarrow 0$.) Because of inequality~(\ref{equation: Lower Bound Approx Starting Time Two}), \algo has a waiting period and starts the schedule~$S_2$ at time
\begin{equation*}
\timeSch_2=\frac{L\left(\frac{\Theta}{\Theta-1},0,\{\sigma_1,\sigma_2^{(1)},\sigma_2^{(2)}\}\right)}{\Theta-1}=\frac{3}{(\Theta-1)^2}+\frac{2-\eps'}{\Theta-1}.
\end{equation*}
If \algo serves $\sigma_2^{(2)}$ before serving $\sigma_2^{(1)}$ the time it needs is at least
\begin{equation*}
\left|1-\left(-\frac{1}{\Theta-1}\right)\right|+\left|\left(-\frac{1}{\Theta-1}\right)-\left(2+\frac{1}{\Theta-1}-2\eps'\right)\right|=3+\frac{3}{\Theta-1}-2\eps'.
\end{equation*}
The best schedule that serves $\sigma_2^{(2)}$ after serving $\sigma_2^{(1)}$ needs time
\begin{equation*}
\left|1-\left(2+\frac{1}{\Theta-1}-2\eps'\right)\right|+\left|\left(2+\frac{1}{\Theta-1}-2\eps'\right)-\left(-\frac{1}{\Theta-1}\right)\right|=3+\frac{3}{\Theta-1}-4\eps'.
\end{equation*}
Thus, \algo serves $\sigma_2^{(2)}$ after serving $\sigma_2^{(1)}$ and finishes $S_2$ at position $\posSch_3=-\frac{1}{\Theta-1}$ at time
\begin{equation*}
\timeSch_2+L(\timeSch_2,\posSch_2,\{\sigma_2^{(1)},\sigma_2^{(2)}\})=\frac{3}{(\Theta-1)^2}+\frac{2-\eps'}{\Theta-1}+3+\frac{3}{\Theta-1}-4\eps'.
\end{equation*}
Now let the final request
\begin{equation*}
\sigma_3=\left(\frac{3}{(\Theta-1)^2}-\eps',\frac{3}{(\Theta-1)^2}-\eps';\frac{3}{(\Theta-1)^2}+\frac{2}{\Theta-1}\right)
\end{equation*}
appear.
By assumption, we have $\Theta<2$, which implies
\begin{equation*}
2+\frac{1}{\Theta-1}-2\eps'=\frac{2\Theta-1}{\Theta-1}-2\eps'\overset{\Theta<2}{<}\frac{3}{(\Theta-1)^2}-\eps',
\end{equation*}
i.e., the position of the request $\sigma_3$ lies to the right of $\sigma_2^{(1)}$. Thus we have for all $t\ge\frac{3}{(\Theta-1)^2}+\frac{2-\eps'}{\Theta-1}+3+\frac{3}{\Theta-1}-4\eps'$ the equation
\begin{align*}
L(t,0,\{\sigma_1,\sigma_2^{(1)},\sigma_2^{(2)},\sigma_3\})&=\left|0-\left(-\frac{1}{\Theta-1}\right)\right|+\left|\left(-\frac{1}{\Theta-1}\right)-\frac{3}{(\Theta-1)^2}-\eps'\right|\\
&=\frac{2}{\Theta-1}+\frac{3}{(\Theta-1)^2}-\eps'.
\end{align*}
Therefore the final schedule is not started before time
\begin{equation*}
\frac{L(t,0,\{\sigma_1,\sigma_2^{(1)},\sigma_2^{(2)},\sigma_3\})}{\Theta-1}=\frac{2}{(\Theta-1)^2}+\frac{3}{(\Theta-1)^3}-\frac{\eps'}{\Theta-1}.
\end{equation*}
However, by assumption, we have $\Theta\ge\frac{1}{2}\left(1+\sqrt{5}\right)$ and $\eps<\frac{1}{4}(\frac{5\Theta^2-9\Theta+4}{2\Theta+1})$, i.e.,~$\eps'<\frac{1}{4}$, which implies
\begin{align*}
\timeSch_2+L(\timeSch_2,\posSch_2,\{\sigma_2^{(1)},\sigma_2^{(2)}\})&\myoverset{}{=}{55}\frac{3}{(\Theta-1)^2}+\frac{2-\eps'}{\Theta-1}+3+\frac{3}{\Theta-1}-4\eps'\\
&\myoverset{}{=}{55}\frac{3\Theta}{(\Theta-1)^2}+\frac{2\Theta}{\Theta-1}+1-4\eps'-\frac{\eps'}{\Theta-1}\\
&\myoverset{}{=}{55}\frac{3(\Theta(\Theta-1))}{(\Theta-1)^3}+\frac{2(\Theta(\Theta-1))}{(\Theta-1)^2}\\
&\qquad\qquad\qquad+1-4\eps'-\frac{\eps'}{\Theta-1}\\
&\myoverset{$\eps'<\frac{1}{4}$}{>}{55}\frac{3(\Theta(\Theta-1))}{(\Theta-1)^3}+\frac{2(\Theta(\Theta-1))}{(\Theta-1)^2}-\frac{\eps'}{\Theta-1}\\
&\myoverset{$\Theta\ge\frac{1}{2}\left(1+\sqrt{5}\right)$}{\ge}{55}\frac{3}{(\Theta-1)^3}+\frac{2}{(\Theta-1)^2}-\frac{\eps'}{\Theta-1}\\
&\myoverset{}{=}{55}\frac{L(t,0,\{\sigma_1,\sigma_2^{(1)},\sigma_2^{(2)},\sigma_3\})}{\Theta-1}
\end{align*}
i.e., the starting time of the schedule $S_3$ is the ending time of the schedule $S_2$ and we have
\begin{equation*}
\timeSch_3=\frac{3}{(\Theta-1)^2}+\frac{2-\eps'}{\Theta-1}+3+\frac{3}{\Theta-1}-4\eps'.
\end{equation*}
The schedule $S_3$ needs time
\begin{equation*}
L(\timeSch_3,\posSch_3,\{\sigma_3\})=\left|\left(-\frac{1}{\Theta-1}\right)-\left(\frac{3}{(\Theta-1)^2}-\eps'\right)\right|=\frac{1}{\Theta-1}+\frac{3}{(\Theta-1)^2}-\eps'
\end{equation*}
To sum it up, we have
\begin{align*}
\algoS&=\timeSch_3+L(\timeSch_3,\posSch_3,\{\sigma_3\})\\
&=\frac{3}{(\Theta-1)^2}+\frac{2-\eps'}{\Theta-1}+3+\frac{3}{\Theta-1}-4\eps'\\
&\qquad+\frac{1}{\Theta-1}+\frac{3}{(\Theta-1)^2}-\eps'\\
&=\frac{6}{(\Theta-1)^2}+\frac{6}{\Theta-1}+3-\frac{5\Theta-4}{\Theta-1}\eps'.
\end{align*}
On the other hand, \opt goes from the origin straight to position $-\frac{1}{\Theta-1}$ serving request $\sigma_2^{(2)}$ at time $\frac{1}{\Theta-1}+\eps'$ (i.e., it has to wait for $\eps'$ units of time after it reaches position $-\frac{1}{\Theta-1}$). Then \opt walks straight from the origin to position $\frac{3}{(\Theta-1)^2}-\eps'$ serving all remaining requests. Thus, we have
\begin{align*}
\optS&=\left|0-\left(-\frac{1}{\Theta-1}\right)\right|+\eps'+\left|-\frac{1}{\Theta-1}-\left(\frac{3}{(\Theta-1)^2}-\eps'\right)\right|\\
&=\frac{2}{\Theta-1}+\frac{3}{(\Theta-1)^2}.
\end{align*}
Note that \opt can do this even if $c=1$ since for all requests the starting point is equal to the ending point. Since we have $\eps'=\frac{2\Theta+1}{5\Theta^2-9\Theta+4}\eps$, we finally obtain
\begin{align*}
\frac{\algoS}{\optS}&=\frac{\frac{6}{(\Theta-1)^2}+\frac{6}{\Theta-1}+3-\frac{5\Theta-4}{\Theta-1}\eps'}{\frac{2}{\Theta-1}+\frac{3}{(\Theta-1)^2}}\\
&=\frac{3\Theta^2+3}{2\Theta+1}-\frac{5\Theta^2-9\Theta+4}{2\Theta+1}\eps'\\
&=\frac{3\Theta^2+3}{2\Theta+1}-\eps,
\end{align*}
as claimed.
\end{proof}

Recall that the optimal parameter $\Theta^*$ established in \Cref{theorem: General Upper Bound} is the only positive, real solution of the equation
\begin{equation*}
\frac{3\Theta^2+3}{2\Theta+1}=\frac{2\Theta^2-\Theta+1}{\Theta^2-1},
\end{equation*}
which is $\Theta^*\approx 1.7125$. Therefore, according to \Cref{proposition: Lower Bound Waiting} and \Cref{proposition: Lower Bound No Waiting} the parameter~$\Theta^*$ lies in the range where the upper bounds of Propositions \ref{proposition: Upper Bound Waiting} and \ref{proposition: Upper Bound No Waiting} are both tight. It remains to make sure that for all~$\Theta$ that lie outside of this range the competitive ratio of $\algo_\Theta$ is larger than~$\rho^*\approx 2.6662$.
Let $\eps>0$ with $\eps<\frac{4\Theta+4}{\Theta-1}\cdot\min\{\frac{\Theta}{2\Theta-2},\frac{\Theta^2-\Theta-2}{(\Theta-1)^2},\frac{1}{\Theta-1}\}$ (note that $\frac{\Theta^2-\Theta-2}{(\Theta-1)^2}>0$ for $\Theta>2$) and $\eps'=\frac{\Theta-1}{4\Theta+4}\eps$.
Consider the set of requests $\sigma_{\Theta>2}=\{\sigma_1,\sigma_2^{(1)},\sigma_2^{(2)},\sigma_3\}$ with
\begin{align*}
\sigma_1&:=(1,1;0),\\
\sigma_2^{(1)}&:=\left(\frac{\Theta-2}{2\Theta-2}+\eps',1;\frac{1}{\Theta-1}+\eps'\right),\\
\sigma_2^{(2)}&:=\left(-\frac{1}{\Theta-1}+\eps',-\frac{1}{\Theta-1}+\eps';\frac{1}{\Theta-1}+\eps'\right),\\
\sigma_3&:=\left(1,1;\frac{\Theta+1}{(\Theta-1)^2}+\eps'\right).\\
\end{align*}
We compute \algo's completion time for the set of requests $\sigma_{\Theta>2}$ in the case $2<\Theta\le 1+\sqrt{2}$ and in the case $\Theta> 1+\sqrt{2}$.

\begin{lemma}\label{lemma: Lower Bound One}
Let the capacity $c\in\N\cup\{\infty\}$ of the server be arbitrary but fixed and let $2<\Theta\le 1+\sqrt{2}$. We have 
\begin{equation*}
\frac{\algo(\sigma_{\Theta>2})}{\opt(\sigma_{\Theta>2})}\ge\frac{3\Theta^2-2\Theta+1}{\Theta^2-1}-\eps.
\end{equation*}
In particular, we have 
\begin{equation*}
\frac{\algo(\sigma_{\Theta>2})}{\opt(\sigma_{\Theta>2})}>\rho^*\approx 2.6662.
\end{equation*}
for $\Theta\in (2,1+\sqrt{2}]\approx(2,2.4142]$ and sufficiently small $\eps$.
\end{lemma}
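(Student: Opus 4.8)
The plan is to follow the run of $\algo$ on $\sigma_{\Theta>2}$ step by step, read off $\algo(\sigma_{\Theta>2})$ and $\opt(\sigma_{\Theta>2})$ in closed form, divide, and then check the numerical consequence; throughout we take $\eps$ (hence $\eps'$) small enough for the thresholds below to behave as claimed, and treat the endpoint $\Theta=1+\sqrt2$ by a limiting argument as in \Cref{proposition: Lower Bound Waiting} and \Cref{proposition: Lower Bound No Waiting}. Since $L(t,0,\{\sigma_1\})=1$ for all $t$, $\algo$ starts $S_1$ at time $\tfrac1{\Theta-1}$, drives to position $1$, and finishes at time $\tfrac\Theta{\Theta-1}$, so $\posSch_2=1$. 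The requests $\sigma_2^{(1)},\sigma_2^{(2)}$ (released at $\tfrac1{\Theta-1}+\eps'$) are ignored during $S_1$ and pending afterwards; one computes $L\bigl(\tfrac\Theta{\Theta-1},0,\{\sigma_1,\sigma_2^{(1)},\sigma_2^{(2)}\}\bigr)=\tfrac2{\Theta-1}+1-2\eps'$ (drive left to $-\tfrac1{\Theta-1}+\eps'$, then right to $1$, collecting $\sigma_2^{(1)}$ en route), and since $\Theta<1+\sqrt2$ the inequality $\tfrac\Theta{\Theta-1}<\tfrac1{\Theta-1}L\bigl(\tfrac\Theta{\Theta-1},0,\{\sigma_1,\sigma_2^{(1)},\sigma_2^{(2)}\}\bigr)$ holds for small $\eps'$ (it is equivalent to $\Theta^2-2\Theta-1<-2\eps'(\Theta-1)$). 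One also checks $\sigma_3$ is not yet released when the wait ends, so $\algo$ waits until $\timeSch_2=\tfrac{\Theta+1}{(\Theta-1)^2}-\tfrac{2\eps'}{\Theta-1}$ and serves precisely $\{\sigma_2^{(1)},\sigma_2^{(2)}\}$ in $S_2$.

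The crucial step is to pin down $S_2$ and, in particular, its endpoint $\posSch_3$. Starting from $\posSch_2=1$, the coordinate $\tfrac{\Theta-2}{2\Theta-2}$ is chosen so that the tour which first drives to $\tfrac{\Theta-2}{2\Theta-2}+\eps'$ and back to $1$ (collecting and delivering $\sigma_2^{(1)}$) and only then drives to $-\tfrac1{\Theta-1}+\eps'$ (serving $\sigma_2^{(2)}$) has length $2\bigl(1-\tfrac{\Theta-2}{2\Theta-2}-\eps'\bigr)+\bigl(1+\tfrac1{\Theta-1}-\eps'\bigr)=\tfrac{2\Theta}{\Theta-1}-3\eps'$, which is strictly shorter --- by exactly $\eps'$ --- than the tour that first drives to $-\tfrac1{\Theta-1}+\eps'$ and returns to $1$ (length $\tfrac{2\Theta}{\Theta-1}-2\eps'$). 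Distinguishing whether the last visit of $1$ precedes or follows the visit of $-\tfrac1{\Theta-1}+\eps'$ shows no tour does better, for every capacity $c$; hence $S_2$ is the former tour, $\posSch_3=-\tfrac1{\Theta-1}+\eps'$, and $S_2$ finishes at $\timeSch_2+\tfrac{2\Theta}{\Theta-1}-3\eps'=\tfrac{2\Theta^2-\Theta+1}{(\Theta-1)^2}-O(\eps)$.

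Next, $\sigma_3=(1,1;\tfrac{\Theta+1}{(\Theta-1)^2}+\eps')$ is released strictly after $\timeSch_2$ but strictly before $S_2$ finishes, so it is ignored during $S_2$ and is the only pending request afterwards; since $L(t,0,\sigma_{\Theta>2})=\tfrac2{\Theta-1}+1-2\eps'$ (the point $1$ already lies on the tour), the waiting threshold for $S_3$ equals $\timeSch_2$, which is before $S_2$ finishes, so $\algo$ does not wait and drives from $-\tfrac1{\Theta-1}+\eps'$ to $1$, adding $\tfrac\Theta{\Theta-1}-\eps'$. Summing, $\algo(\sigma_{\Theta>2})=\tfrac{2\Theta^2-\Theta+1}{(\Theta-1)^2}+\tfrac\Theta{\Theta-1}-O(\eps)=\tfrac{3\Theta^2-2\Theta+1}{(\Theta-1)^2}-O(\eps)$. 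For the optimum, $\opt$ drives from the origin to $-\tfrac1{\Theta-1}+\eps'$, waits $2\eps'$ for the release of $\sigma_2^{(2)}$ and serves it, then drives right to $1$, picking up $\sigma_2^{(1)}$ on the way and delivering it there together with serving $\sigma_1$ and $\sigma_3$ (the latter is already released since $\tfrac{\Theta+1}{(\Theta-1)^2}+\eps'<\tfrac{\Theta+1}{\Theta-1}$ by the bound on $\eps'$); this gives $\opt(\sigma_{\Theta>2})=\tfrac{\Theta+1}{\Theta-1}$, which is optimal because $\opt$ must reach both $-\tfrac1{\Theta-1}+\eps'$ (not before $\tfrac1{\Theta-1}+\eps'$) and position $1$, and it already works for $c=1$. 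Dividing and using $\eps'=\tfrac{\Theta-1}{4\Theta+4}\eps$ yields $\tfrac{\algo(\sigma_{\Theta>2})}{\opt(\sigma_{\Theta>2})}\ge\tfrac{3\Theta^2-2\Theta+1}{\Theta^2-1}-\eps$. Finally, $g(\Theta):=\tfrac{3\Theta^2-2\Theta+1}{\Theta^2-1}$ has derivative with numerator $2(\Theta^2-4\Theta+1)$, which is negative on $(2,1+\sqrt2]\subset(2-\sqrt3,2+\sqrt3)$, so $g$ is decreasing there and $g(\Theta)\ge g(1+\sqrt2)=2\sqrt2\approx2.828>\rho^*$; hence for sufficiently small $\eps$ we obtain $\tfrac{\algo(\sigma_{\Theta>2})}{\opt(\sigma_{\Theta>2})}>\rho^*$.

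The main obstacle is the second step: realizing that the optimal offline schedule $\algo$ executes for $\{\sigma_2^{(1)},\sigma_2^{(2)}\}$ detours for $\sigma_2^{(1)}$ first and ends at $-\tfrac1{\Theta-1}+\eps'$ rather than at $1$. This difference, which lives only at the $\eps'$ level of the tour lengths, is exactly what forces the extra schedule $S_3$ of length $\approx\tfrac\Theta{\Theta-1}$ and produces the term $3\Theta^2$; consequently the $\eps/\eps'$ bookkeeping must be done carefully, and the degenerate endpoint $\Theta=1+\sqrt2$ (where the wait before $S_2$ vanishes and one instead obtains ratio $\tfrac{4\Theta}{\Theta+1}=2\sqrt2$) needs a separate but routine check.
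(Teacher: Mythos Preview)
Your proof is correct and follows essentially the same route as the paper's: trace \algo{} through $S_1$, $S_2$, $S_3$, identify that the optimal $S_2$ serves $\sigma_2^{(1)}$ first and ends at $-\tfrac{1}{\Theta-1}+\eps'$, compute $\algo$ and $\opt$ in closed form, and check monotonicity of $g_1$ to get $g_1(1+\sqrt2)=2\sqrt2>\rho^*$. The only difference is that you (correctly) compute $L\bigl(\tfrac{\Theta}{\Theta-1},0,\{\sigma_1,\sigma_2^{(1)},\sigma_2^{(2)}\}\bigr)=\tfrac{2}{\Theta-1}+1-2\eps'$, whereas the paper writes $\tfrac{2}{\Theta-1}+1$ by including a superfluous $2\eps'$ wait; this $O(\eps')$ shift is why you have to handle the endpoint $\Theta=1+\sqrt2$ separately, but it is absorbed into the final $-\eps$ term and does not affect the conclusion.
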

\begin{proof}
For all $t\ge 0$, we have $L(t,0,\{\sigma_1\})=1$. Thus, \algo starts its first schedule $S_1$ at time $\timeSch_1=\frac{1}{\Theta-1}$ and reaches position $\posSch_2=1$ at time $\frac{\Theta}{\Theta-1}$. 
We have $\eps<\frac{4\Theta+4}{\Theta-1}\cdot\frac{\Theta}{2\Theta-2}$, i.e., $\eps'<\frac{\Theta}{2\Theta-2}$, which implies
\begin{equation*}
0<\frac{\Theta-2}{2\Theta-2}+\eps'\overset{\eps'<\frac{\Theta}{2\Theta-2}}{<}1
\end{equation*}
for $\Theta>2$, i.e. the starting position of $\sigma_2^{(1)}$ is between $0$ and $1$. For $t\ge\frac{\Theta}{\Theta-1}$ we have 
\begin{align*}
L(t,0,\{\sigma_1,\sigma_2^{(1)},\sigma_2^{(2)}\})&=\left|0-\left(-\frac{1}{\Theta-1}+\eps'\right)\right|+2\eps'+\left|\left(-\frac{1}{\Theta-1}+\eps'\right)-1\right|\\
&=\frac{2}{\Theta-1}+1.
\end{align*}
Thus, the second schedule $S_2$ is not started before time 
\begin{equation*}
\frac{L\left(\frac{\Theta}{\Theta-1},0,\{\sigma_1,\sigma_2^{(1)},\sigma_2^{(2)}\}\right)}{\Theta-1}=\frac{2}{(\Theta-1)^2}+\frac{1}{\Theta-1}=\frac{\Theta+1}{(\Theta-1)^2}.
\end{equation*}
By assumption, we have $\Theta\le 1+\sqrt{2}$, which implies that for the time $\frac{\Theta}{\Theta-1}$, when \algo reaches position $\posSch_2=1$, the inequality 
\begin{equation*}
\frac{L\left(\frac{\Theta}{\Theta-1},0,\{\sigma_1,\sigma_2^{(1)},\sigma_2^{(2)}\}\right)}{\Theta-1}=\frac{\Theta+1}{(\Theta-1)^2}\ge \frac{\Theta}{\Theta-1}
\end{equation*}
holds.
Thus, \algo has a waiting period and starts the schedule $S_2$ at time
\begin{equation*}
\timeSch_2=\frac{L\left(\frac{\Theta}{\Theta-1},0,\{\sigma_1,\sigma_2^{(1)},\sigma_2^{(2)}\}\right)}{\Theta-1}=\frac{\Theta+1}{(\Theta-1)^2}
\end{equation*}
If \algo serves $\sigma_2^{(2)}$ before serving $\sigma_2^{(1)}$ the time it needs is at least
\begin{equation*}
\left|1-\left(-\frac{1}{\Theta-1}+\eps'\right)\right|+\left|\left(-\frac{1}{\Theta-1}+\eps'\right)-1\right|=\frac{2\Theta}{\Theta-1}-2\eps'.
\end{equation*}
The best schedule that serves $\sigma_2^{(2)}$ after serving $\sigma_2^{(1)}$ needs time
\begin{align*}
&\phantom{=}\;\;\left|1-\left(\frac{\Theta-2}{2\Theta-2}+\eps'\right)\right|+\left|\left(\frac{\Theta-2}{2\Theta-2}+\eps'\right)-1\right|+\left|1-\left(-\frac{1}{\Theta-1}+\eps'\right)\right|\\
&=\frac{\Theta}{2\Theta-2}-\eps'+\frac{\Theta}{2\Theta-2}-\eps'+\frac{\Theta}{\Theta-1}-\eps'\\
&=\frac{2\Theta}{\Theta-1}-3\eps'
\end{align*}
Thus, \algo serves $\sigma_2^{(2)}$ after serving $\sigma_2^{(1)}$ and finishes $S_2$ at position $\posSch_3=-\frac{1}{\Theta-1}+\eps'$ at time
\begin{align*}
\timeSch_2+L(\timeSch_2,\posSch_2,\{\sigma_2^{(1)},\sigma_2^{(2)}\})=\frac{\Theta+1}{(\Theta-1)^2}+\frac{2\Theta}{\Theta-1}-3\eps'=\frac{2\Theta^2-\Theta+1}{(\Theta-1)^2}-3\eps'.
\end{align*}
We have for all $t\ge\frac{2\Theta^2-\Theta+1}{(\Theta-1)^2}-3\eps'$ the equation
\begin{align*}
L(t,0,\{\sigma_1,\sigma_2^{(1)},\sigma_2^{(2)},\sigma_3\})&=\left|0-\left(-\frac{1}{\Theta-1}\right)\right|+\left|\left(-\frac{1}{\Theta-1}\right)-1\right|\\
&=\frac{2}{\Theta-1}+1.
\end{align*}
Therefore the final schedule is not started before time
\begin{equation*}
\frac{L(t,0,\{\sigma_1,\sigma_2^{(1)},\sigma_2^{(2)},\sigma_3\})}{\Theta-1}=\frac{2}{(\Theta-1)^2}+\frac{1}{\Theta-1}=\frac{\Theta+1}{(\Theta-1)^2},
\end{equation*}
which is equal to $\timeSch_2$ and thus smaller than $\timeSch_2+L(\timeSch_2,\posSch_2,\{\sigma_2^{(1)},\sigma_2^{(2)}\})$.
Therefore, the starting time of the schedule $S_3$ is the ending time of the schedule $S_2$ and we have
\begin{equation*}
\timeSch_3=\frac{2\Theta^2-\Theta+1}{(\Theta-1)^2}-3\eps'.
\end{equation*}
The schedule $S_3$ needs time
\begin{equation*}
L(\timeSch_3,\posSch_3,\{\sigma_3\})=\left|\left(-\frac{1}{\Theta-1}+\eps\right)-1\right|=\frac{1}{\Theta-1}+1-\eps'=\frac{\Theta}{\Theta-1}-\eps'
\end{equation*}
To sum it up, we have
\begin{align*}
\algoS&=\timeSch_3+L(\timeSch_3,\posSch_3,\{\sigma_3\})\\
&=\frac{2\Theta^2-\Theta+1}{(\Theta-1)^2}-3\eps'+\frac{\Theta}{\Theta-1}-\eps'\\
&=\frac{3\Theta^2-2\Theta+1}{(\Theta-1)^2}-4\eps'.
\end{align*}
On the other hand, \opt goes from the origin straight to position $-\frac{1}{\Theta-1}+\eps'$ serving request $\sigma_2^{(2)}$ at time $\frac{1}{\Theta-1}+\eps'$ (i.e., it has to wait for $2\eps'$ units of time after it reaches position $-\frac{1}{\Theta-1}$). Then \opt walks straight to position $\frac{\Theta-2}{2\Theta-2}+\eps'$ collecting the request $\sigma_2^{(1)}$. Note that the release time of $\sigma_2^{(2)}$ is the same as of $\sigma_2^{(1)}$ and thus \opt has no waiting time at position $\frac{\Theta-2}{2\Theta-2}+\eps'$. \opt reaches position $1$ and delivers $\sigma_2^{(1)}$ at time $\frac{2}{\Theta-1}+1=\frac{\Theta+1}{\Theta-1}$. By assumption we have $\Theta>2$ and $\eps<\frac{4\Theta+4}{\Theta-1}\cdot\frac{\Theta^2-\Theta-2}{(\Theta-1)^2}$, i.e., $\eps'<\frac{\Theta^2-\Theta-2}{(\Theta-1)^2}$, which implies
\begin{equation*}
\frac{\Theta+1}{\Theta-1}=\frac{\Theta+1}{(\Theta-1)^2}+\frac{\Theta^2-\Theta-2}{(\Theta-1)^2}>\frac{\Theta+1}{(\Theta-1)^2}+\eps'.
\end{equation*}
Thus, \opt has no waiting time at position $1$ and can serve the requests $\sigma_1$ and $\sigma_3$ at arrival.
To sum it up, we have
\begin{align*}
\optS&=\left|0-\left(-\frac{1}{\Theta-1}+\eps'\right)\right|+2\eps'+\left|\left(-\frac{1}{\Theta-1}+\eps'\right)-1\right|\\
&=\frac{2}{\Theta-1}+1=\frac{\Theta+1}{\Theta-1}.
\end{align*}
Note that \opt can do this even if $c=1$ since $\sigma_2^{(1)}$ is the only transportation request and no other request lies between its starting position and destination. Since we have $\eps'=\frac{\Theta-1}{4\Theta+4}\eps$, we finally obtain
\begin{align*}
\frac{\algoS}{\optS}&=\frac{\frac{3\Theta^2-2\Theta+1}{(\Theta-1)^2}-4\eps'}{\frac{\Theta+1}{\Theta-1}}\\
&=\frac{3\Theta^2-2\Theta+1}{\Theta^2-1}-\eps=:g_1-\eps.\\
\end{align*}
The function $g_1$ is monotonically decreasing on $(2,1+\sqrt{2}]$. Therefore, we have
\begin{equation*}
\frac{\algoS}{\optS}+\eps>g_1(1+\sqrt{2})=2\sqrt{2}>2.82>\rho^*
\end{equation*}
for all $\Theta\in (2,1+\sqrt{2}]$ and $\frac{\algoS}{\optS}>\rho^*$ for sufficiently small $\eps$.
\end{proof}

\begin{lemma}\label{lemma: Lower Bound Two}
Let the capacity $c\in\N\cup\{\infty\}$ of the server be arbitrary but fixed and let $\Theta> 1+\sqrt{2}$. We have 
\begin{equation*}
\frac{\algo(\sigma_{\Theta>2})}{\opt(\sigma_{\Theta>2})}\ge\frac{4\Theta}{\Theta+1}-\eps.
\end{equation*}
In particular, we have 
\begin{equation*}
\frac{\algo(\sigma_{\Theta>2})}{\opt(\sigma_{\Theta>2})}>\rho^*\approx 2.6662.
\end{equation*}
for $\Theta\in (1+\sqrt{2},\infty)\approx(2.4142,\infty)$ and sufficiently small $\eps$.
\end{lemma}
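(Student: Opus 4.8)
The plan is to reuse the template of the proof of \Cref{lemma: Lower Bound One}: trace \algo's schedules on $\sigma_{\Theta>2}$ one by one, reuse the computation of $\opt(\sigma_{\Theta>2})$ from that proof, form the ratio, and finally check the numerical inequality $\tfrac{4\Theta}{\Theta+1}>\rho^*$. The single structural difference is that for $\Theta>1+\sqrt 2$ we have $\tfrac{\Theta+1}{(\Theta-1)^2}<\tfrac{\Theta}{\Theta-1}$, so, in contrast to \Cref{lemma: Lower Bound One}, \algo does \emph{not} insert a waiting period before $S_2$; this moves the start of $S_2$ from $\tfrac{\Theta+1}{(\Theta-1)^2}$ to $\tfrac{\Theta}{\Theta-1}$, and the shift propagates to the start of $S_3$ and to the final completion time.

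In detail, I would first argue verbatim as in \Cref{lemma: Lower Bound One} that \algo waits until $\timeSch_1=\tfrac{1}{\Theta-1}$ and then serves $\sigma_1$ by moving from the origin to position $1$, so that $\posSch_2=1$ and $S_1$ finishes at time $\tfrac{\Theta}{\Theta-1}$. Next I would bound $L\!\left(\tfrac{\Theta}{\Theta-1},0,\SLE{\,\cdot\,}\right)\le\tfrac{\Theta+1}{\Theta-1}$ and use $\tfrac{\Theta+1}{(\Theta-1)^2}\le\tfrac{\Theta}{\Theta-1}$ (which is exactly the defining inequality of the regime $\Theta\ge 1+\sqrt 2$) to conclude that \algo has no waiting period, i.e.\ $\timeSch_2=\tfrac{\Theta}{\Theta-1}$, with $S_2$ serving the two requests $\sigma_2^{(1)},\sigma_2^{(2)}$ pending at that time. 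Comparing, as in \Cref{lemma: Lower Bound One}, the length of the schedule serving $\sigma_2^{(2)}$ before $\sigma_2^{(1)}$ with the one serving it afterwards shows that \algo serves $\sigma_2^{(2)}$ last, so $S_2$ has length $\tfrac{2\Theta}{\Theta-1}-3\eps'$ and ends at $\posSch_3=-\tfrac{1}{\Theta-1}+\eps'$ at time $\tfrac{3\Theta}{\Theta-1}-3\eps'$. Then $\sigma_3$, which appears during $S_2$, triggers a third schedule, again without a waiting period since $\tfrac{3\Theta}{\Theta-1}-3\eps'$ far exceeds $\tfrac{1}{\Theta-1}L(\,\cdot\,,0,\SLE{\,\cdot\,})$, of length $\tfrac{\Theta}{\Theta-1}-\eps'$, so that $\algo(\sigma_{\Theta>2})=\tfrac{4\Theta}{\Theta-1}-4\eps'$.

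On the optimum side, $\opt$'s behaviour is independent of where $\Theta$ lies relative to $1+\sqrt 2$, so it is exactly as in \Cref{lemma: Lower Bound One}, giving $\opt(\sigma_{\Theta>2})=\tfrac{\Theta+1}{\Theta-1}$ even when $c=1$. Dividing and substituting $\eps'=\tfrac{\Theta-1}{4\Theta+4}\eps$ yields $\tfrac{\algo(\sigma_{\Theta>2})}{\opt(\sigma_{\Theta>2})}=\tfrac{4\Theta}{\Theta+1}-\tfrac{(\Theta-1)^2}{(\Theta+1)^2}\eps\ge\tfrac{4\Theta}{\Theta+1}-\eps$, which is the first claim. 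For the second claim, I would note that $\Theta\mapsto\tfrac{4\Theta}{\Theta+1}$ is strictly increasing on $(1,\infty)$, hence for $\Theta>1+\sqrt 2$ it is bounded below by its value $\tfrac{4(1+\sqrt 2)}{2+\sqrt 2}=2\sqrt 2>2.82>\rho^*$ at the left endpoint, so $\tfrac{\algo(\sigma_{\Theta>2})}{\opt(\sigma_{\Theta>2})}>\rho^*$ for all sufficiently small $\eps$.

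I expect the main obstacle to be essentially administrative: pinning down, at the instants when $S_1$ and when $S_2$ finish, the exact set $R_t$ of unserved released requests, and verifying the $\eps$-dependent thresholds (hidden inside the chosen bound on $\eps$) under which the schedule structure above is realized — in particular that \algo incurs no waiting period before either $S_2$ or $S_3$ and that $\sigma_3$ is ignored during $S_2$. The schedule-length comparison fixing the internal order of $S_2$ is routine but must be re-run, since $\timeSch_2$ (and hence $\timeSch_3$) differs from the corresponding value in \Cref{lemma: Lower Bound One}.
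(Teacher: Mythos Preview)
Your plan mirrors the paper's proof almost exactly: the same schedule-by-schedule trace, the same comparison of the two orderings inside $S_2$, the same computation of $\opt(\sigma_{\Theta>2})$, and the same monotonicity argument $g_2(\Theta)=\tfrac{4\Theta}{\Theta+1}\ge g_2(1+\sqrt2)=2\sqrt2>\rho^*$ at the end. In that sense you are on target.

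There is, however, a genuine gap at precisely the point you flag as the ``main obstacle.'' You assert that $\sigma_3$ appears during $S_2$, but in this regime $t_2=\tfrac{\Theta}{\Theta-1}$ while $\sigma_3$'s release time is $\tfrac{\Theta+1}{(\Theta-1)^2}+\eps'$; the inequality $\tfrac{\Theta+1}{(\Theta-1)^2}<\tfrac{\Theta}{\Theta-1}$ is \emph{equivalent} to $\Theta>1+\sqrt2$, so for every sufficiently small $\eps'$ the request $\sigma_3$ is already released when $S_1$ ends. Hence $\sigma_3\in R_{t_2}$ and is absorbed into $S_2$ at zero extra cost (it sits at $p_2=1$), there is no $S_3$, and one gets only $\algo(\sigma_{\Theta>2})=\tfrac{3\Theta}{\Theta-1}-3\eps'$, i.e.\ a ratio of $\tfrac{3\Theta}{\Theta+1}$, which falls below $\rho^*$ near $\Theta=1+\sqrt2$. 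The paper's own proof commits the same oversight. The fix is easy and leaves the rest of your argument intact: in this regime release $\sigma_3$ at time $\tfrac{\Theta}{\Theta-1}+\eps'$ instead; then $\sigma_3$ is indeed ignored during $S_2$, and since $\opt$ reaches position~$1$ only at time $\tfrac{\Theta+1}{\Theta-1}>\tfrac{\Theta}{\Theta-1}+\eps'$ whenever $\eps'<\tfrac1{\Theta-1}$, the optimum is unchanged.
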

\begin{proof}
For all $t\ge 0$, we have $L(t,0,\{\sigma_1\})=1$. Thus, \algo starts its first schedule $S_1$ at time $\timeSch_1=\frac{1}{\Theta-1}$ and reaches position $\posSch_2=1$ at time $\frac{\Theta}{\Theta-1}$. 
We have $\eps<\frac{4\Theta+4}{\Theta-1}\cdot\frac{\Theta}{2\Theta-2}$, i.e., $\eps'<\frac{\Theta}{2\Theta-2}$, which implies
\begin{equation*}
0<\frac{\Theta-2}{2\Theta-2}+\eps'\overset{\eps'<\frac{\Theta}{2\Theta-2}}{<}1
\end{equation*}
for $\Theta>2$, i.e. the starting position of $\sigma_2^{(1)}$ is between $0$ and $1$. For $t\ge\frac{\Theta}{\Theta-1}$ we have 
\begin{align*}
L(t,0,\{\sigma_1,\sigma_2^{(1)},\sigma_2^{(2)}\})&=\left|0-\left(-\frac{1}{\Theta-1}+\eps'\right)\right|+2\eps'+\left|\left(-\frac{1}{\Theta-1}+\eps'\right)-1\right|\\
&=\frac{2}{\Theta-1}+1.
\end{align*}
Thus, the second schedule $S_2$ is not started before time 
\begin{equation*}
\frac{L\left(\frac{\Theta}{\Theta-1},0,\{\sigma_1,\sigma_2^{(1)},\sigma_2^{(2)}\}\right)}{\Theta-1}=\frac{2}{(\Theta-1)^2}+\frac{1}{\Theta-1}=\frac{\Theta+1}{(\Theta-1)^2}.
\end{equation*}
By assumption, we have $\Theta> 1+\sqrt{2}$, which implies that for the time $\frac{\Theta}{\Theta-1}$, when \algo reaches position $\posSch_2=1$, the inequality 
\begin{equation*}
\frac{L\left(\frac{\Theta}{\Theta-1},0,\{\sigma_1,\sigma_2^{(1)},\sigma_2^{(2)}\}\right)}{\Theta-1}=\frac{\Theta+1}{(\Theta-1)^2}< \frac{\Theta}{\Theta-1}
\end{equation*}
holds.
Thus, \algo has no waiting period and the starting time of the schedule $S_2$ is the ending time of the schedule $S_1$. We have
\begin{equation*}
\timeSch_2\frac{\Theta}{\Theta-1}
\end{equation*}
If \algo serves $\sigma_2^{(2)}$ before serving $\sigma_2^{(1)}$ the time it needs is at least
\begin{equation*}
\left|1-\left(-\frac{1}{\Theta-1}+\eps'\right)\right|+\left|\left(-\frac{1}{\Theta-1}+\eps'\right)-1\right|=\frac{2\Theta}{\Theta-1}-2\eps'.
\end{equation*}
The best schedule that serves $\sigma_2^{(2)}$ after serving $\sigma_2^{(1)}$ needs time
\begin{align*}
&\phantom{=}\;\;\left|1-\left(\frac{\Theta-2}{2\Theta-2}+\eps'\right)\right|+\left|\left(\frac{\Theta-2}{2\Theta-2}+\eps'\right)-1\right|+\left|1-\left(-\frac{1}{\Theta-1}+\eps'\right)\right|\\
&=\frac{\Theta}{2\Theta-2}-\eps'+\frac{\Theta}{2\Theta-2}-\eps'+\frac{\Theta}{\Theta-1}-\eps'\\
&=\frac{2\Theta}{\Theta-1}-3\eps'
\end{align*}
Thus, \algo serves $\sigma_2^{(2)}$ after serving $\sigma_2^{(1)}$ and finishes $S_2$ at position $\posSch_3=-\frac{1}{\Theta-1}+\eps'$ at time
\begin{align*}
\timeSch_2+L(\timeSch_2,\posSch_2,\{\sigma_2^{(1)},\sigma_2^{(2)}\})=\frac{\Theta}{\Theta-1}+\frac{2\Theta}{\Theta-1}-3\eps'=\frac{3\Theta}{\Theta-1}-3\eps'.
\end{align*}
We have for all $t\ge\frac{3\Theta}{\Theta-1}-3\eps'$ the equation
\begin{align*}
L(t,0,\{\sigma_1,\sigma_2^{(1)},\sigma_2^{(2)},\sigma_3\})&=\left|0-\left(-\frac{1}{\Theta-1}\right)\right|+\left|\left(-\frac{1}{\Theta-1}\right)-1\right|\\
&=\frac{2}{\Theta-1}+1.
\end{align*}
Therefore the final schedule is not started before time
\begin{equation*}
\frac{L(t,0,\{\sigma_1,\sigma_2^{(1)},\sigma_2^{(2)},\sigma_3\})}{\Theta-1}=\frac{2}{(\Theta-1)^2}+\frac{1}{\Theta-1}=\frac{\Theta+1}{(\Theta-1)^2},
\end{equation*}
which is, as before, smaller than $\timeSch_2$ and thus smaller than $\timeSch_2+L(\timeSch_2,\posSch_2,\{\sigma_2^{(1)},\sigma_2^{(2)}\})$.
Therefore, the starting time of the schedule $S_3$ is the ending time of the schedule $S_2$ and we have
\begin{equation*}
\timeSch_3=\frac{3\Theta}{\Theta-1}-3\eps'.
\end{equation*}
The schedule $S_3$ needs time
\begin{equation*}
L(\timeSch_3,\posSch_3,\{\sigma_3\})=\left|\left(-\frac{1}{\Theta-1}+\eps\right)-1\right|=\frac{1}{\Theta-1}+1-\eps'=\frac{\Theta}{\Theta-1}-\eps'
\end{equation*}
To sum it up, we have
\begin{align*}
\algoS&=\timeSch_3+L(\timeSch_3,\posSch_3,\{\sigma_3\})\\
&=\frac{3\Theta}{\Theta-1}-3\eps'+\frac{\Theta}{\Theta-1}-\eps'\\
&=\frac{4\Theta}{\Theta-1}-4\eps'.
\end{align*}
On the other hand, \opt goes from the origin straight to position $-\frac{1}{\Theta-1}+\eps'$ serving request $\sigma_2^{(2)}$ at time $\frac{1}{\Theta-1}+\eps'$ (i.e., it has to wait for $2\eps'$ units of time after it reaches position $-\frac{1}{\Theta-1}$). Then \opt walks straight to position $\frac{\Theta-2}{2\Theta-2}+\eps'$ collecting the request $\sigma_2^{(1)}$. Note that the release time of $\sigma_2^{(2)}$ is the same as of $\sigma_2^{(1)}$ and thus \opt has no waiting time at position $\frac{\Theta-2}{2\Theta-2}+\eps'$. \opt reaches position $1$ and delivers $\sigma_2^{(1)}$ at time $\frac{2}{\Theta-1}+1=\frac{\Theta+1}{\Theta-1}$. By assumption we have $\Theta>2$ and $\eps<\frac{4\Theta+4}{\Theta-1}\cdot\frac{1}{\Theta-1}$, i.e., $\eps'<\frac{1}{\Theta-1}$, which implies
\begin{equation*}
\frac{\Theta+1}{\Theta-1}=\frac{\Theta}{\Theta-1}+\frac{1}{\Theta-1}>\frac{\Theta}{\Theta-1}+\eps'.
\end{equation*}
Thus, \opt has no waiting time at position $1$ and can serve the requests $\sigma_1$ and $\sigma_3$ at arrival.
To sum it up, we have
\begin{align*}
\optS&=\left|0-\left(-\frac{1}{\Theta-1}+\eps'\right)\right|+2\eps'+\left|\left(-\frac{1}{\Theta-1}+\eps'\right)-1\right|\\
&=\frac{2}{\Theta-1}+1=\frac{\Theta+1}{\Theta-1}.
\end{align*}
Note that \opt can do this even if $c=1$ since $\sigma_2^{(1)}$ is the only transportation request and no other request lies between its starting position and destination. Since we have $\eps'=\frac{\Theta-1}{4\Theta+4}\eps$, we finally obtain
\begin{align*}
\frac{\algoS}{\optS}&=\frac{\frac{4\Theta}{\Theta-1}-4\eps'}{\frac{\Theta+1}{\Theta-1}}\\
&=\frac{4\Theta}{\Theta+1}-\eps=:g_2-\eps.\\
\end{align*}
The function $g_1$ is monotonically increasing on $(1+\sqrt{2},\infty)$. Therefore, we have
\begin{equation*}
\frac{\algoS}{\optS}+\eps>g_2(1+\sqrt{2})=2\sqrt{2}>2.82>\rho^*
\end{equation*}
for all $\Theta\in (1+\sqrt{2},\infty)$ and $\frac{\algoS}{\optS}>\rho^*$ for sufficiently small $\eps$.
\end{proof}

\begin{lemma}\label{lemma: Remaining Lower Bounds}
Let $\Theta>2$. There is a set of requests $\sigma_{\Theta>2}$ such that
\begin{equation*}
\frac{\algo(\sigma_{\Theta>2})}{\opt(\sigma_{\Theta>2})}>\rho^*\approx 2.6662.
\end{equation*}
\end{lemma}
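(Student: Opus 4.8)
The plan is to obtain the statement as an immediate corollary of \Cref{lemma: Lower Bound One} and \Cref{lemma: Lower Bound Two}. Both lemmas analyze \algo on exactly the same request set $\sigma_{\Theta>2}=\{\sigma_1,\sigma_2^{(1)},\sigma_2^{(2)},\sigma_3\}$ introduced above, which depends on $\Theta$ and on the auxiliary parameter $\eps$ (with $\eps'=\frac{\Theta-1}{4\Theta+4}\eps$) subject to the single constraint $\eps<\frac{4\Theta+4}{\Theta-1}\cdot\min\{\frac{\Theta}{2\Theta-2},\frac{\Theta^2-\Theta-2}{(\Theta-1)^2},\frac{1}{\Theta-1}\}$. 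First I would observe that the two ranges considered, namely $(2,1+\sqrt{2}]$ in \Cref{lemma: Lower Bound One} and $(1+\sqrt{2},\infty)$ in \Cref{lemma: Lower Bound Two}, partition $(2,\infty)$. Hence for an arbitrary $\Theta>2$ exactly one of the two lemmas applies to $\sigma_{\Theta>2}$.

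Next I would read off the quantitative conclusion in each case. In the first case \Cref{lemma: Lower Bound One} gives $\algo(\sigma_{\Theta>2})/\opt(\sigma_{\Theta>2})\ge g_1(\Theta)-\eps$ with $g_1$ monotonically decreasing on $(2,1+\sqrt{2}]$, so $g_1(\Theta)\ge g_1(1+\sqrt{2})=2\sqrt{2}>2.82>\rho^*$; choosing $\eps$ small enough (which is compatible with the constraint above, since that constraint is exactly the one used in \Cref{lemma: Lower Bound One}) yields the ratio strictly above $\rho^*$. In the second case \Cref{lemma: Lower Bound Two} gives $\algo(\sigma_{\Theta>2})/\opt(\sigma_{\Theta>2})\ge g_2(\Theta)-\eps$ with $g_2$ monotonically increasing on $(1+\sqrt{2},\infty)$, so $g_2(\Theta)\ge g_2(1+\sqrt{2})=2\sqrt{2}>2.82>\rho^*$, and again shrinking $\eps$ gives the claim. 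Either way, the set $\sigma_{\Theta>2}$ with a sufficiently small $\eps$ is the desired instance.

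I do not expect any genuine obstacle here: the lemma is purely a bookkeeping statement that stitches the two preceding case analyses together. The only point requiring a line of care is that the constraint imposed on $\eps$ in the definition of $\sigma_{\Theta>2}$ is the same one invoked by both \Cref{lemma: Lower Bound One} and \Cref{lemma: Lower Bound Two}, so no incompatibility arises when we pass from ``not started before'' bounds to equalities for the schedule starting times. I would therefore keep the proof to two or three sentences, citing the monotonicity of $g_1$ and $g_2$ and the boundary value $2\sqrt{2}>\rho^*$, and conclude.

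\begin{proof}
For any fixed $\Theta>2$, consider the set $\sigma_{\Theta>2}$ defined above with $\eps>0$ sufficiently small. If $2<\Theta\le 1+\sqrt{2}$, then \Cref{lemma: Lower Bound One} yields $\algo(\sigma_{\Theta>2})/\opt(\sigma_{\Theta>2})>\rho^*$. If $\Theta>1+\sqrt{2}$, then \Cref{lemma: Lower Bound Two} yields $\algo(\sigma_{\Theta>2})/\opt(\sigma_{\Theta>2})>\rho^*$. Since $(2,1+\sqrt{2}]\cup(1+\sqrt{2},\infty)=(2,\infty)$, the claim follows.
\end{proof}
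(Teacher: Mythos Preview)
Your proposal is correct and follows exactly the paper's approach: the paper's proof is a single sentence stating that the lemma is an immediate consequence of \Cref{lemma: Lower Bound One} and \Cref{lemma: Lower Bound Two}. Your observation that the two $\Theta$-ranges partition $(2,\infty)$ and that a sufficiently small $\eps$ works in both cases is precisely the intended (and entirely routine) bookkeeping.
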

\begin{proof}
This is immediate consequence of \Cref{lemma: Lower Bound One} and \Cref{lemma: Lower Bound Two}.
\end{proof}

\Cref{figure: Bounds} shows the upper and lower bounds that we have established. \Cref{theorem: Main Theorem} now follows from \Cref{theorem: General Upper Bound} combined with Propositions~\ref{proposition: Lower Bound Waiting} and~\ref{proposition: Lower Bound No Waiting}, as well as \Cref{lemma: Remaining Lower Bounds}.

\begin{figure}
\centering\begin{tikzpicture}
\begin{axis}[
samples=100,
xmin=1,
xmax=4,
xtick={1,2,...,4},
ytick={2,3,4},
ymin=2,
ymax=4,
xlabel=scaling paramter $\Theta$,
ylabel=competitive ratio $\rho$,
height=8cm,
width=12cm,
ylabel near ticks,
xlabel near ticks,
]

\addplot[no markers, thick, domain=1.01:2][black!40!green,line width=1pt]{(2*x^2-x+1)/(x^2-1)};
\addplot[no markers, thick, domain=2:6][black!40!green,line width=1pt, dashed]{(2*x^2-x+1)/(x^2-1)};

\addplot[no markers, domain=1:1.6180339887][black!40!red,line width=1pt, dashed]{(3*x^2+3)/(2*x+1)};
\addplot[no markers, domain=1.6180339887:2][black!40!red,line width=1pt]{(3*x^2+3)/(2*x+1)};
\addplot[no markers, domain=2:2.41421356237][name path=A1,black!40!red,line width=1pt, dashed]{(3*x^2+3)/(2*x+1)};
\addplot[no markers, domain=2.41421356237:6][name path=A2,black!40!red,line width=1pt, dashed]{(3*x^2+3)/(2*x+1)};

\addplot[no markers, domain=2:2.41421356237][name path=B1,black!40!blue,line width=1pt]{(3*x^2-2*x+1)/(x^2-1)};
\addplot[no markers, domain=2.41421356237:6][name path=B2,black!40!blue,line width=1pt]{(4*x)/(x+1)};

\draw[densely dashed,line width=0.5pt,black!70] (axis cs:1.71249,\pgfkeysvalueof{/pgfplots/ymin}) -- (axis cs:1.71249,\pgfkeysvalueof{/pgfplots/ymax});
\addplot[no markers,densely dashed, domain=1:6,line width=0.5pt, black!70]{2.6662};

\addplot[black!6!red!15, postaction={pattern=my north east lines,pattern color=gray}] fill between[of=A1 and B1];
\addplot[black!6!red!15, postaction={pattern=my north east lines,pattern color=gray}] fill between[of=A2 and B2];

\node[right] at (axis cs: 1.5,3.3) {$f_1$};
\node[left] at (axis cs: 2.2,3.3) {$f_2$};
\node[right] at (axis cs: 2.1,2.8) {$g_1$};
\node[left] at (axis cs: 3.1,2.9) {$g_2$};
\node[above] at (axis cs: 1.97,2.05) {$\Theta^*\approx 1.71$};
\node[below right] at (axis cs: 1,2.67) {$\rho^*\approx 2.67$};
\end{axis}
\end{tikzpicture}
\caption{Overview of our bounds for \algo. 
The functions~$f_1$ (green) / $f_2$ (red) are upper bounds for the cases where \algo waits / does not wait before starting the final schedule, respectively.
The upper bounds are drawn solid in the domains where they are tight for their corresponding case.
The functions $g_1$ and $g_2$ (blue) are general lower bounds.}
\label{figure: Bounds}
\end{figure}
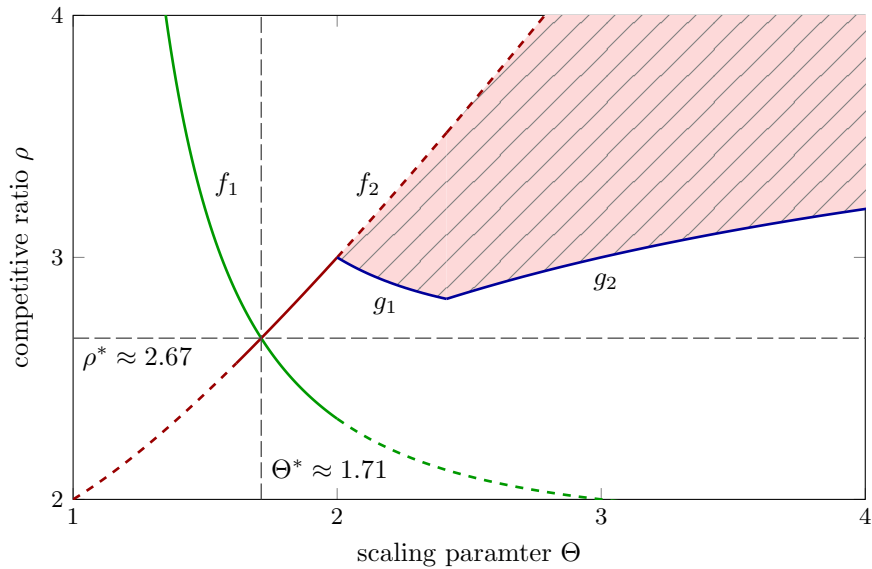

\begin{proof}[Proof of \ref{theorem: Main Theorem}]
We have shown in \Cref{proposition: Lower Bound Waiting} that the upper bound 
\begin{equation*}
\frac{\algoS}{\optS}\le f_1(\Theta) = \frac{2\Theta^2-\Theta+1}{\Theta^2-1}
\end{equation*}
established in \Cref{proposition: Upper Bound Waiting} for the case, where \algo waits before starting the final schedule, is tight for all $\Theta\in(1,2)$. Furthermore, we have shown in \Cref{proposition: Lower Bound No Waiting} that the upper bound
\begin{equation*}
\frac{\algoS}{\optS}\le f_2(\Theta) = \frac{3\Theta^2+3}{2\Theta+1}
\end{equation*}
established in \Cref{proposition: Upper Bound No Waiting} for the case, where \algo does not wait before starting the final schedule, is tight for all $\Theta\in(\frac{1}{2}(1+\sqrt{5}),2]$. Since $\Theta^*\approx 1.71249$ lies in those ranges, the competitive ratio of $\algo_{\Theta^*}$ is indeed exactly~$\rho^*$. 

It remains to show that for every~$\Theta>1$ with~$\Theta\neq\Theta^*$ the competitive ratio is larger. First, according to \Cref{lemma: Remaining Lower Bounds}, the competitive ratio of \algo with parameter $\Theta\in (2,\infty)$ is larger than $\rho^*$. By monotonicity of~$f_1$, every function value in $(1,\Theta^*)$ is larger than $f_1(\Theta^*)=\rho^*$. Thus, the competitive ratio of \algo with parameter $\Theta\in (1,\Theta^*)$ is larger than $\rho^*$, since $f_1$ is tight on $(1,\Theta^*)$ by \Cref{proposition: Lower Bound Waiting}. Similarly, by monotonicity of~$f_2$, every function value in $(\Theta^*,2]$ is larger than~$f_2(\Theta^*)=\rho^*$. Thus, the competitive ratio of \algo with parameter~$\Theta\in (\Theta^*,2]$ is larger than~$\rho^*$, since $f_2$ is tight on $(\Theta^*,2]$ by \Cref{proposition: Lower Bound No Waiting}.
\end{proof}

\newpage
\bibliography{LowerBounds}
\addcontentsline{toc}{chapter}{Bibliography}
\newpage
\appendix

\section{Proof of Lemma \ref{lemma: Critical Requests}}\label{appendix: Proof Of Critical Requests}

In this section we prove \Cref{lemma: Critical Requests}. 
The proof is almost identical to the proof of \cite[Lemma 6]{Disser1}. Since there are however several parts where inequalities change slightly, we decided to present the full proof here.
\lemmaCriticalRequests*

Let the requests $\SL{}$ and $\SR{}$ be critical. Furthermore, let $\TA\in\{\TL{},\TR{}\}$ be the starting position of the request $\SA\in\{\SL{},\SR{}\}$ that is served first by \alg and let $\TB\in\{\TL{},\TR{}\}$ be the starting position of the request $\SB\in\{\SL{},\SR{}\}$ that is not served first by \alg. By properties (iii) and (iv) of \Cref{definition: Critical Requests}, \alg cannot serve $\SA$ before time $(2\rh-2)|\TB|+(\rh-2)|\TA|$.
Thus, we have 
\begin{equation}
\alg(\sigma_\rh)\ge (2\rh-2)\TB+(\rh-2)\TA+|\TA-\TB|=(2\rh-1)|\TB|+(\rh-1)|\TA|.\label{equation: Alg Is Fast}
\end{equation}
We have equality in inequality (\ref{equation: Alg Is Fast}) if \alg serves $\SA$ the earliest possible time and then moves directly to position $\TB$. However, in general \alg does not need to do this and instead can wait. 
At time $t\ge\max\{|\TA|,|\TB|\}$, we have
\begin{equation*}
\alg(\sigma_\rh)\ge t+|\pos{t}-\TA|+|\TA-\TB|
\end{equation*}
if \alg still has to serve $\SA$ and 
\begin{equation*}
\alg(\sigma_\rh)\ge t+|\pos{t}-\TB|
\end{equation*}
if $\SA$ is served and only $\SB$ is left to be served. 
We want to measure the delay of \alg at a time $t\ge\max\{|\TA|,|\TB|\}$, i.e. the difference between the time \alg needs at least to serve both requests $\SA$ and $\SB$ and the time $(2\rh-1)|\TB|+(\rh-1)|\TA|$.
We define for $t\ge\max\{|\TA|,|\TB|\}$ the function
\begin{equation*}
\delay(t):=\begin{cases}
t+|\pos{t}-\TA|-(\rh-2)|\TA|-(2\rh-2)|\TB| & \,\text{if $\SA$ is not} \\
& \,\text{served at $t$,} \\
t+|\pos{t}-\TB|-(\rh-1)|\TA|-(2\rh-1)|\TB| & \,\text{if $\SA$ is served at $t$,} \\
& \,\text{but $\SB$ not,} \\
\text{undefined} & \,\text{otherwise.} \\
\end{cases}
\end{equation*}
We make the following observation about $\delay$.
\begin{observation}\label{observation: Delay}
Let $t\ge \max\{|\TA|,|\TB|\}$ be a time at which $\SB$ is not served yet. The earliest time \alg can serve $\SB$ is $(2\rh-1)|\TB|+(\rh-1)|\TA|+\delay(t)$.
\end{observation}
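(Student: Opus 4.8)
The plan is to unfold the two branches of the definition of $\delay$ and, in each, read off the earliest moment \alg can serve $\SB$ from the one-dimensional geometry, and then check that the resulting expression matches $(2\rh-1)|\TB|+(\rh-1)|\TA|+\delay(t)$.

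First I would treat the branch where $\SA$ has not yet been served at time $t$. By the choice of $\SA$ (the request \alg serves first), \alg serves $\SA$ before $\SB$, so from time $t$ onwards the server must visit the position of $\SA$ and only afterwards the position of $\SB$. On the line, reaching $\SA$'s position from $\pos{t}$ costs at least $|\pos{t}-\TA|$, and going from there to $\SB$'s position costs at least $|\TA|+|\TB|$ (this is the distance between the two positions, since $\SL{}$ and $\SR{}$ lie on opposite sides of the origin); both bounds are simultaneously attained by the continuation that moves directly to $\SA$ and then directly to $\SB$, and no release time obstructs this because $|\TA|,|\TB|\le\max\{|\TA|,|\TB|\}\le t$. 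Hence the earliest time \alg can serve $\SB$ is $t+|\pos{t}-\TA|+|\TA|+|\TB|$. Substituting the first branch $\delay(t)=t+|\pos{t}-\TA|-(\rh-2)|\TA|-(2\rh-2)|\TB|$, I would verify
\begin{equation*}
(2\rh-1)|\TB|+(\rh-1)|\TA|+\delay(t)=t+|\pos{t}-\TA|+|\TA|+|\TB|,
\end{equation*}
which holds since $(2\rh-1)-(2\rh-2)=1$ and $(\rh-1)-(\rh-2)=1$; this mirrors inequality~(\ref{equation: Alg Is Fast}) with the extra term $\delay(t)$ made explicit.

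Second I would treat the branch where $\SA$ is already served at time $t$ but $\SB$ is not. Now the only remaining obligation is to visit $\SB$'s position, which costs at least $|\pos{t}-\TB|$ and is unobstructed by the release time $|\TB|\le t$; the direct continuation attains it, so the earliest time \alg can serve $\SB$ is $t+|\pos{t}-\TB|$. Substituting the second branch $\delay(t)=t+|\pos{t}-\TB|-(\rh-1)|\TA|-(2\rh-1)|\TB|$ yields $(2\rh-1)|\TB|+(\rh-1)|\TA|+\delay(t)=t+|\pos{t}-\TB|$ immediately. Since $\SB$ is assumed not to be served at time $t$, these two branches are exhaustive, which finishes the proof.

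I do not expect a genuine obstacle here: the statement is essentially a restatement of the definition of $\delay$. The only points deserving care are (i) arguing that the displayed travel times are truly optimal on the line and are realized by a legitimate continuation of \alg's trajectory — which comes down to noting that the relevant release times $|\TA|,|\TB|$ are at most $t$ — and (ii) using that the distance between the positions of $\SA$ and $\SB$ equals $|\TA|+|\TB|$, which is valid precisely because the two critical requests lie on opposite sides of the origin.
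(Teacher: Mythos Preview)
Your argument is correct and matches the paper's treatment: the paper states this as an observation without proof, since it is an immediate rephrasing of the definition of $\delay$, and your unfolding of the two branches is exactly the intended justification. The only points worth checking---that $|\TA-\TB|=|\TA|+|\TB|$ because the two critical requests sit on opposite sides of the origin, and that the release times do not obstruct the direct continuation since they equal $|\TA|,|\TB|\le t$---you have addressed explicitly.
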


\begin{lemma}\label{lemma: Existance W}
There is a $W\ge 0$ with
\begin{equation*}
\delay\left(2|\TB{}|+|\TA{}|+\frac{W}{\rh-1}\right)=W
\end{equation*}
\end{lemma}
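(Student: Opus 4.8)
The plan is to realize the required $W$ as a zero of a continuous function and apply the intermediate value theorem. Write $t_0:=2|\TB{}|+|\TA{}|$ and, for $W\ge 0$, $t_W:=t_0+\tfrac{W}{\rh-1}$; note $t_W\ge t_0\ge\max\{|\TA{}|,|\TB{}|\}$. We may assume \alg eventually serves $\SB$, since otherwise its completion time is unbounded and it is not $(\rh-\eps)$-competitive, so the surrounding claim is trivially true; let $T$ be the time at which \alg serves $\SB$. Properties~(iii)/(iv) say \alg serves $\SA$ no earlier than $(2\rh-2)|\TB{}|+(\rh-2)|\TA{}|$, after which it still needs $|\TA{}-\TB{}|=|\TA{}|+|\TB{}|$ time to reach $\TB{}$; hence $T\ge(2\rh-1)|\TB{}|+(\rh-1)|\TA{}|>t_0$, the strict inequality using $\rh>2$. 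Put $\bar W:=(\rh-1)(T-t_0)>0$, so that $t_{\bar W}=T$. Then $g(W):=\delay(t_W)-W$ is well defined on $[0,\bar W)$ (on this range $\SB$ is unserved at $t_W$), and it is continuous there because $\pos{\cdot}$ moves at unit speed and the two branches defining $\delay$ agree at the instant $\tau$ when \alg serves $\SA$ (both equal $\tau-(2\rh-2)|\TB{}|-(\rh-2)|\TA{}|$, using $\pos{\tau}=\TA{}$ and $|\TA{}-\TB{}|=|\TA{}|+|\TB{}|$).

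First I would check $g(0)=\delay(t_0)\ge 0$. If \alg has served $\SA$ by time $t_0$, at some time $\tau\le t_0$ with $\tau\ge(2\rh-2)|\TB{}|+(\rh-2)|\TA{}|$, then $|\pos{t_0}-\TA{}|\le t_0-\tau$, so by the reverse triangle inequality $t_0+|\pos{t_0}-\TB{}|\ge\tau+|\TA{}-\TB{}|\ge(2\rh-1)|\TB{}|+(\rh-1)|\TA{}|$, i.e.\ $\delay(t_0)\ge 0$. If \alg has not served $\SA$ by time $t_0$, then $\delay(t_0)\ge t_0-\bigl((2\rh-2)|\TB{}|+(\rh-2)|\TA{}|\bigr)$, which is nonnegative precisely because $2|\TB{}|+|\TA{}|\ge(2\rh-2)|\TB{}|+(\rh-2)|\TA{}|$: when $\SR{}$ is served first this is immediate from $\TR{}\ge\TL{}$ and $3-\rh>2\rh-4>0$, and when $\SL{}$ is served first it is equivalent to $\TR{}/\TL{}\le(3-\rh)/(2\rh-4)$, which follows from property~(v) via a short estimate (the right-hand side comfortably exceeds the bound of~(v) at our~$\rh$).

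Next I would examine $W\uparrow\bar W$. Then $t_W\uparrow T$, and just before \alg serves $\SB$ it is arbitrarily close to $\TB{}$ with $\SA$ already served, so $\delay(t_W)\to T-(2\rh-1)|\TB{}|-(\rh-1)|\TA{}|$; combining this with $(\rh-1)t_0=(2\rh-2)|\TB{}|+(\rh-1)|\TA{}|$ gives
\begin{equation*}
\lim_{W\uparrow\bar W}g(W)=T-(2\rh-1)|\TB{}|-(\rh-1)|\TA{}|-(\rh-1)(T-t_0)=(2-\rh)T-|\TB{}|<0,
\end{equation*}
again by $\rh>2$. Hence some $W_1\in(0,\bar W)$ has $g(W_1)<0$; since $g$ is continuous on $[0,W_1]$ with $g(0)\ge 0$, the intermediate value theorem yields $W^*\in[0,W_1]$ with $g(W^*)=0$, i.e.\ $\delay\bigl(2|\TB{}|+|\TA{}|+\tfrac{W^*}{\rh-1}\bigr)=W^*$. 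Taking $W:=W^*\ge 0$ proves the lemma.

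The point requiring the most care is the first reduction — setting up $g$, and inside it the case $\delay(t_0)\ge 0$ when $\SA$ is not yet served at $t_0$, which is the one place where property~(v) of the modified notion of critical requests is genuinely needed (through a short polynomial estimate valid at the chosen root of $4\rh^3-26\rh^2+39\rh-5$). The a-priori assumption that \alg serves $\SB$ at all, and the continuity of $\delay$ at the moment $\SA$ is served, are routine but should be mentioned explicitly.
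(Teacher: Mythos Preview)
Your proof is correct and takes essentially the same approach as the paper: both apply the intermediate value theorem to the function $f(W):=\delay(2|\TB|+|\TA|+W/(\rh-1))-W$, establishing $f(0)\ge 0$ from properties~(iii)--(v) and locating a point where $f<0$ near the time $\SB$ is served. Your version computes the limiting value at the right endpoint explicitly (obtaining $(2-\rh)T-|\TB|<0$) and spells out continuity across the branch switch, whereas the paper reaches negativity by a short contradiction argument, but the underlying idea is identical.
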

\begin{proof}
Because of property (ii) of \Cref{definition: Critical Requests}, at time $\max\{|\TA|,|\TB|\}$ neither $\SA$ nor $\SB$ has been served by \alg yet.
Since \alg serves $\SB$ after $\SA$, the request $\SB$ is not served before time 
\begin{equation*}
\max\{|\TA|,|\TB|\}+|\TA|+|\TB|\ge 2|\TB{}|+|\TA{}|,
\end{equation*}
i.e, $\delay(2\TB{}+\TA{})$ is defined.
Because of properties (iii) and (iv) of \Cref{definition: Critical Requests}, $\SA$ is not served before time $(2\rh-2)|\TB|+(\rh-2)|\TA|$.
Thus, for $t\ge (2\rh-2)\TB+(\rh-2)\TA$, we have $\delay(t)\ge 0$.
We have 
\begin{align}
2\TB{}+\TA{}&\myoverset{Def \ref{definition: Critical Requests} (v)}{\ge}{40} 2\TB{}+(3-\rh)\frac{-8\rh^2+50\rh-66}{4\rh^2-30\rh+50}|\TB|+(\rh-2)|\TA|\nonumber\\
&\myoverset{$2<\rh<2.5$}{>}{40}(2\rh-2)|\TB|+(\rh-2)|\TA|,\label{equation: Delay Bigger Zero}
\end{align}
i.e. $\delay(2\TB{}+\TA{})\ge 0$.
If $\delay(2\TB{}+\TA{})=0$, we have $W=0$ and are done. 
Otherwise, by inequality (\ref{equation: Delay Bigger Zero}), we have $\delay(2\TB{}+\TA{})>0$.
Note that \alg needs to serve $\SB$ at some point to be $(\rho-\eps)$-competitive.
Let $W^*$ be chosen such that \alg serves $\SB$ at time $2|\TB{}|+|\TA{}|$+$\frac{W^*}{\rh-1}$.
Therefore
\begin{equation*}
\delay\left(2|\TB{}|+|\TA{}|+\frac{W^*}{\rh-1}-\eps'\right)
\end{equation*}
is defined for some sufficiently small $\eps'\le |\TB|$.
Define the function 
\begin{equation*}
f(W):=\delay\left(2|\TB{}|+|\TA{}|+\frac{W}{\rh-1}\right)-W.
\end{equation*}
Note that $f$ is continuous and we have $f(0)>0$.
If 
\begin{equation*}
\delay\left(2|\TB{}|+|\TA{}|+\frac{W^*}{\rh-1}-\eps'\right)\le \frac{W^*}{\rh-1}-\eps'\overset{\rh>1}{<}W^*-(\rh-1)\eps',
\end{equation*}
we have $f(W^*-(\rh-1)\eps')<0$ and we find $W$ in the interval $(0,W^*-(\rh-1)\eps']$.
Otherwise, we have 
\begin{equation*}
\delay\left(2|\TB{}|+|\TA{}|+\frac{W^*}{\rh-1}-\eps'\right)> \frac{W^*}{\rh-1}-\eps'.
\end{equation*}
By \Cref{observation: Delay} \alg has not served $\SB$ at time 
\begin{equation*}
(2\rh-1)|\TB|+(\rh-1)|\TA|+\frac{W^*}{\rh-1}-\eps'\overset{\rh>2,\eps'\le |\TB|}{>}2|\TB|+|\TA|+\frac{W^*}{\rh-1}.
\end{equation*}
This is a contradiction to the fact, that $W^*$ was chosen such that \alg serves $\SB$ at time $2|\TB{}|+|\TA{}|$+$\frac{W^*}{\rh-1}$.
\end{proof}

\begin{lemma}\label{lemma: Serving Time Of SA}
Let $W\ge 0$ with
\begin{equation*}
\delay\left(2|\TB{}|+|\TA{}|+\frac{W}{\rh-1}\right)=W.
\end{equation*}
\alg serves $\SA$ no later than time $2|\TB{}|+|\TA{}|+\frac{W}{\rh-1}$.
\end{lemma}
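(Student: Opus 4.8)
The plan is to argue by contradiction, so suppose \alg serves $\SA$ only at some time $t_A > t^\star$, where $t^\star := 2|\TB| + |\TA| + \frac{W}{\rh-1}$. Since the hypothesis presupposes that $\delay(t^\star)$ is defined, $\SB$ is unserved at time $t^\star$; and since $t_A > t^\star$, so is $\SA$. Hence the first case of the definition of $\delay$ applies at $t^\star$, so the hypothesis $\delay(t^\star)=W$ reads
\[
  t^\star + |\pos{t^\star} - \TA| - (\rh-2)|\TA| - (2\rh-2)|\TB| = W .
\]

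From here I would first squeeze out a lower bound on $W$. Solving the displayed identity for the distance $|\pos{t^\star}-\TA|$ and plugging in $t^\star$ gives
\[
  0 \ \le\ |\pos{t^\star} - \TA| \ =\ \frac{\rh-2}{\rh-1}\,W \,+\, (\rh-3)|\TA| \,+\, (2\rh-4)|\TB| ,
\]
and since $2<\rh<3$ the coefficient $\frac{\rh-2}{\rh-1}$ is a tiny positive number, so the inequality forces $W$ to be large:
\[
  W \ \ge\ \frac{\rh-1}{\rh-2}\Bigl((3-\rh)|\TA| - (2\rh-4)|\TB|\Bigr).
\]
The term $-(2\rh-4)|\TB|$ has the wrong sign, so this is useful only once we know that $|\TA|$ is not much smaller than $|\TB|$ — which is precisely the content of property~(v) of \Cref{definition: Critical Requests}: it bounds $\TR{}/\TL{}$, hence also $\max\{|\TA|,|\TB|\}/\min\{|\TA|,|\TB|\}$, by a constant close to $1.72$ (recall $\{|\TA|,|\TB|\}=\{\TL{},\TR{}\}$). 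A crude computation with $\rh\approx 2.0585$ then gives $W \ge \kappa\cdot\TL{}$ for a constant $\kappa$ comfortably larger than $3\rh$ (in fact larger than $13$), using also $|\TA| \ge \min\{|\TA|,|\TB|\} = \TL{}$.

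Next I would lower-bound \alg's completion time. At time $t^\star$ both $\SA$ and $\SB$ are still unserved and $\SA$ must be served before $\SB$, so \alg cannot serve $\SB$ before time $t^\star + |\pos{t^\star}-\TA| + |\TA-\TB|$, which by the first displayed identity and $|\TA-\TB|=|\TA|+|\TB|$ equals $W + (\rh-1)|\TA| + (2\rh-1)|\TB| \ge W$; thus $\alg(\sigma_\rh)\ge W$. On the other hand, property~(i) of \Cref{definition: Critical Requests} says the tour $\move{-\TL{}}\oplus\move{\TR{}}$ of length $2\TL{}+\TR{}$ serves all requests released so far, so — since $\SL{},\SR{}$ are the last requests and the adversary may simply release nothing more — we have $\opt(\sigma_\rh)\le 2\TL{}+\TR{}=(2+\TR{}/\TL{})\,\TL{}$. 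Combining with $W\ge \kappa\,\TL{}$ and the bound on $\TR{}/\TL{}$ from property~(v) yields $\alg(\sigma_\rh)/\opt(\sigma_\rh) > \rh$, so \alg is not $(\rh-\eps)$-competitive. This contradicts the assumption underlying the proof of \Cref{lemma: Critical Requests} that \alg is $(\rh-\eps)$-competitive (equivalently: if \alg serves $\SA$ this late, the desired conclusion of \Cref{lemma: Critical Requests} already holds for the current prefix), and we conclude that \alg serves $\SA$ no later than $t^\star$.

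The only real obstacle I anticipate is the sign bookkeeping in the second step: turning the mere nonnegativity of $|\pos{t^\star}-\TA|$ into a genuinely large lower bound on $W$ hinges on the coefficient $\frac{\rh-2}{\rh-1}$ being small and on property~(v) forcing $(3-\rh)|\TA|$ to dominate $(2\rh-4)|\TB|$; everything else is slack (the gap between the resulting ratio and $\rh$ is enormous), so only coarse estimates with $\rh\in(2,2.1)$ are needed. A minor point is the meta-reading of ``not $(\rh-\eps)$-competitive'' inside the proof of \Cref{lemma: Critical Requests}, namely that the adversary is allowed to stop releasing requests at the current prefix.
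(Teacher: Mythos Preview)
Your proof is correct and arrives at the same contradiction as the paper, but you run the argument in the contrapositive direction. Both proofs pivot on the identity that, if $\SA$ is still unserved at $t^\star$, the first case of $\delay$ gives
\[
0 \le |\pos{t^\star}-\TA| \;=\; \tfrac{\rh-2}{\rh-1}W + (\rh-3)|\TA| + (2\rh-4)|\TB|.
\]
The paper first uses the competitiveness assumption to obtain the upper bound $W<|\TA|+|\TB|$, and then shows via property~(v) that this forces the right-hand side to be negative (equivalently, that the auxiliary inequality~(\ref{equation: Assumption Serving Time Of SA}) holds), contradicting nonnegativity of the distance. You instead read the same identity as a \emph{lower} bound on $W$, use property~(v) to make $(3-\rh)|\TA|$ dominate $(2\rh-4)|\TB|$, and exploit the amplifier $\tfrac{\rh-1}{\rh-2}\approx 18$ to get $W\gtrsim 13\,\TL{}$; since $\alg(\sigma_\rh)\ge W$ while $\opt(\sigma_\rh)\le 2\TL{}+\TR{}$ by property~(i), this blows past the ratio $\rh$. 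Your route avoids isolating and proving the intermediate inequality~(\ref{equation: Assumption Serving Time Of SA}) and gets away with coarse numerics because of the huge slack in $\tfrac{\rh-1}{\rh-2}$; the paper's version is algebraically tighter but otherwise the same mechanism. Your reading of the ``not $(\rh-\eps)$-competitive'' clause---that the adversary may stop at the current prefix---matches exactly how the paper itself argues two lines later.
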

\begin{proof}
Assume we have
\begin{equation}
2|\TB{}|+|\TA{}|+\frac{W}{\rh-1}\ge (2\rh-2)|\TB|+(\rh-2)|\TA|+W.\label{equation: Assumption Serving Time Of SA}
\end{equation}
Then, by definition of $W$ and \Cref{observation: Delay}, \alg can serve $\SB$ at time
\begin{align}
&\phantom{=}\;\,(2\rh-1)|\TB|+(\rh-1)|\TA|+\delay\left(2|\TB{}|+|\TA{}|+\frac{W}{\rh-1}\right)\nonumber\\
&=(2\rh-1)|\TB|+(\rh-1)|\TA|+W.\label{equation: Time Of Alg Obs}
\end{align}
Because of inequality (\ref{equation: Assumption Serving Time Of SA}), this can only be the case if \alg serves $\SA$ no later than time 
\begin{align*}
(2\rh-1)|\TB|+(\rh-1)|\TA|+W-|\TB|-|\TA|&\myoverset{}{=}{15}(2\rh-2)|\TB|+(\rh-2)|\TA|+W\\
&\myoverset{(\ref{equation: Assumption Serving Time Of SA})}{\le}{15} 2|\TB{}|+|\TA{}|+\frac{W}{\rh-1}.
\end{align*}
Thus, it remains to show inequality (\ref{equation: Assumption Serving Time Of SA}).
Because of property (i) of \Cref{definition: Critical Requests} all requests can be served the tours $\move{\TA}\oplus\move{\TB}$ and $\move{\TB}\oplus\move{\TA}$.
By inequality \ref{equation: Time Of Alg Obs}, we have $\alg(\sigma_\rh)\ge (2\rh-1)|\TB|+(\rh-1)|\TA|+W$. Thus, if we have
\begin{equation*}
\alg(\sigma_\rh)\ge (2\rh-1)|\TB|+(\rh-1)|\TA|+W> (\rho-\eps)(2|\TB{}|+|\TA{}|)\ge (\rho-\eps)\opt(\sigma_\rh),
\end{equation*}
\alg is not $(\rho-\eps)$-competitive. Therefore, we may assume 
\begin{equation*}
(2\rh-1)|\TB|+(\rh-1)|\TA|+W\le (\rho-\eps)(2|\TB{}|+|\TA{}|),
\end{equation*}
and thus
\begin{align}
W&\le(\rh-\eps) (2|\TB{}|+|\TA{}|)-(2\rh-1)|\TB|-(\rh-1)|\TA|\nonumber\\
&= (1-2\eps)|\TB|+(1-\eps)|\TA|\nonumber\\
&< |\TB|+|\TA|.\label{equation: Bound On W Serve SA}
\end{align}
Inequality (\ref{equation: Assumption Serving Time Of SA}) now is equivalent to the inequality
\begin{align*}
&\myoverset{}{}{40}\frac{2|\TB{}|+|\TA{}|-((2\rh-2)|\TB|+(\rh-2)|\TA|)}{1-\frac{1}{\rh-1}}\\
&\myoverset{}{=}{40} \frac{(\rh-1)((4-2\rh)|\TB{}|+(3-\rh)|\TA{}|)}{\rh-2}\\
&\myoverset{}{=}{40} \frac{(\rh-1)(4-2\rh)}{\rh-2}|\TB{}|+\frac{(\rh-1)(3-\rh)}{\rh-2}|\TA{}|\\
&\myoverset{Def \ref{definition: Critical Requests} (v)}{\ge}{40} |\TA{}|+ (2-2\rh)|\TB{}|\\
&\qquad\qquad+\frac{(-\rh^2+3\rh-1)(-8\rh^2+50\rh-66)}{(\rh-2)(4\rh^2-30\rh+50)}|\TB{}|\\
&\myoverset{}{\ge}{40} |\TA{}|+ \frac{5 \rh^3 - 36 \rh^2 + 86 \rh - 67}{2 \rh^3 - 19 \rh^2 + 55 \rh - 50}|\TB{}|\\
&\myoverset{$2< \rh< 2.5$}{>}{40} |\TA{}|+ |\TB{}|\\
&\myoverset{(\ref{equation: Bound On W Serve SA})}{>}{40} W
\end{align*}
if we solve inequality (\ref{equation: Assumption Serving Time Of SA}) for $W$.
\end{proof}

Now we have all ingredients to proof \Cref{lemma: Critical Requests}.

\begin{proof}[Proof of \Cref{lemma: Critical Requests}]
Let $W\ge 0$ with
\begin{equation*}
\delay\left(2|\TB{}|+|\TA{}|+\frac{W}{\rh-1}\right)=W.
\end{equation*}
We present the request
\begin{align*}
\SA^+&\phantom{:}=(\TA^+,\TA^+;t_0^+)\\
&:=\left(\TA+\sgn{\TA}\frac{W}{\rh-1},\TA+\sgn{\TA}\frac{W}{\rh-1};2|\TB|+|\TA|+\frac{W}{\rh-1}\right)
\end{align*}
and distinguish two cases.

\paragraph*{\textit{Case 1: }At time $t_0^+$, {\normalfont\alg} is at least as close to $\TB$ as to $\TA^+$ or it serves $\SB$ before $\SA^+$.\\}
In this case, we do not present additional requests.
By \Cref{lemma: Serving Time Of SA}, \alg has served $\SA$ at time $t_0^+$ or before and by \Cref{observation: Delay} it does not serve $\SB$ earlier than time $(2\rh-1)|\TB|+(\rh-1)|\TA|+W$.
Thus, we have
\begin{align*}
\alg(\sigma_\rh)&\ge (2\rh-1)|\TB|+(\rh-1)|\TA|+W+|\TB|+|\TA|+\frac{W}{\rh-1}\\
&\ge \rh\left(2|\TB|+|\TA|+\frac{W}{\rh-1}\right)\\
&=\rh\opt(\sigma_\rh).
\end{align*}

\paragraph*{\textit{Case 2: }At time $t_0^+$, {\normalfont\alg} is closer to $\TA^+$ than to $\TB$ and it serves $\SA^+$ first.\\}
We assume that the offline server continues moving away from the origin after serving $\SA^+$ at time $\TA^+$. 
Then, the position of the  offline serve at time $t\ge |\TB|$ is $\sgn{\TA}t+2\TB$.
We denote by 
\begin{equation*}
M(t):=\frac{\sgn{\TA}t+3\TB}{2}
\end{equation*}
the midpoint between the current position of the offline server and the position $\TB$.
Note that the time $M^{-1}(p)$, when the midpoint is at position $p$ is given by
\begin{equation*}
M^{-1}(p):=|2p-3\TB|.
\end{equation*}
We again distinguish two cases

\paragraph*{\textit{Case 2.1: }{\normalfont\alg} does not serve $\SA^+$ until time $M^{-1}(\TA^+)$.\\}
In this case, we do not present additional requests.
Since we are in Case 2, neither $\SA^+$ nor $\SB$ is served at time $M^{-1}(\TA^+)$.
Thus, we have
\begin{align*}
\alg(\sigma_\rh)&\myoverset{}{\ge}{40} M^{-1}(\TA^+)+|\TA^+|+|\TB|\\
&\myoverset{}{=}{40}|2\TA^+-3\TB|+|\TA^+|+|\TB|\\
&\myoverset{}{=}{40}|2\TA+2\sgn{\TA}\frac{W}{\rh-1}-3\TB|+|\TA|+\frac{W}{\rh-1}+|\TB|\\
&\myoverset{}{=}{40} 3|\TA|+4|\TB|+3\frac{W}{\rh-1}\\
&\myoverset{$2< \rh< 2.5$}{>}{40} \rh|\TA|+2\rh|\TB|+3\frac{W}{\rh-1}\\
&\myoverset{}{>}{40} \rh\left(|\TA|+2|\TB|+\frac{W}{\rh-1}\right)\\
&\myoverset{}{=}{40} \rh\opt(\sigma_\rh).
\end{align*}

\paragraph*{\textit{Case 2.2: }{\normalfont\alg} serves $\SA^+$ before time $M^{-1}(\TA^+)$.\\}
By definition of $W$, the delay function is defined for time $\TA^+$, hence \alg has not served $\SB$ before time $\TA^+$.
Since \alg is to the right of the midpoint $M(\TA^+)$ at time $\TA^+$, there is a first time $t_\text{mid}$ at which $M(t_\text{mid})=\pos{t_\text{mid}}$.
We present the request
\begin{equation*}
\SA^{++}=(\TA^{++},\TA^{++};t_0^{++}):=(\sgn{\TA}t_\text{mid}+2\TB,\sgn{\TA}t_\text{mid}+2\TB;t_\text{mid}).
\end{equation*}
Note that \alg is at the midpoint between $\TA^{++}$ and $\TB$ and thus, both tours $\move{\TA^{++}}\oplus\move{\TB}$ and $\move{\TB}\oplus\move{\TA^{++}}$ incur identical costs for \alg.
We have
\begin{equation*}
\alg(\sigma_\rh)\ge t_\text{mid}+3\left(\frac{|\sgn{\TA}t_\text{mid}+2\TB-\TB|}{2}\right)=\frac{5t_\text{mid}+3|\TB|}{2}
\end{equation*}
We have $\opt(\sigma_\rh)=t_\text{mid}$, i.e., if we want to show
\begin{equation}
\alg(\sigma_\rh)\ge \frac{5t_\text{mid}+3|\TB|}{2}\ge \rh t_\text{mid}=\rh\opt(\sigma_\rh)\label{equation: Last Case Comp}
\end{equation}
Inequality (\ref{equation: Last Case Comp}) is equivalent to
\begin{equation}
(5-2\rh)t_\text{mid}\ge 3|\TB|.\label{equation: Last Case Comp Easier}
\end{equation}
Since $2\rh<2.5$, the coefficient $(5-2\rh)$ of $t_\text{mid}$ is positive. 
Thus we may assume $t_\text{mid}$ is minimal to show the inequality (\ref{equation: Last Case Comp Easier}).
By assumption, $\SA^+$ is already served at time $t_\text{mid}$. 
Hence, $t_\text{mid}$ is minimum if, starting at time $t_0^+$ at position $\pos{t_0^+}$, \alg serves $\SA^+$ and then moves towards the origin.
Then, $t_\text{mid}$ is the solution of the equation
\begin{equation}
\sgn{\TA}t_0^++|\pos{t_0^+}-\TA^+|+\TA^+-\sgn{\TA}t_\text{mid}=\frac{\sgn{\TA}t_\text{mid}+3\TB}{2}.\label{equation: Last Case TMID}
\end{equation}
Because of \Cref{lemma: Serving Time Of SA}, the request $\SA$ is already served at time $t_0^+$. 
Furthermore, since the position of $\SB$ has not been visited yet at time $t_0^+$, we have $\sgn{\TA}\pos{t_0^+}>\sgn{\TA}\TB$, i.e.,
\begin{equation*}
|\pos{t_0^+}-\TB|=\sgn{\TA}(\pos{t_0^+}-\TB)>0
\end{equation*}
and thus, because of $-\sgn{\TA}\TB=|\TB|$, we get
\begin{align}
\delay(t_0^+)&=t_0^++|\pos{t_0^+}-\TB|-(\rh-1)|\TA|-(2\rh-1)|\TB|\nonumber\\
&=t_0^++\sgn{\TA}\pos{t_0^+}-\sgn{\TA}\TB-(\rh-1)|\TA|-(2\rh-1)|\TB|\nonumber\\
&=t_0^++\sgn{\TA}\pos{t_0^+}+|\TB|-(\rh-1)|\TA|-(2\rh-1)|\TB|.\label{equation: Bound For Position}
\end{align}
Solving equation (\ref{equation: Bound For Position}) for $\sgn{\TA}\pos{t_0^+}$ gives
\begin{align}
&\myoverset{}{}{65}\sgn{\TA}\pos{t_0^+}\nonumber\\
&\myoverset{}{=}{65} \delay\left(2|\TB|+|\TA|+\frac{W}{\rh-1}\right)-\frac{W}{\rh-1}\nonumber\\
&\qquad\qquad\qquad\qquad+(\rh-2)|\TA|+(2\rh-4)|\TB|\nonumber\\
&\myoverset{}{=}{65} W-\frac{W}{\rh-1}+(\rh-2)|\TA|+(2\rh-4)|\TB|\nonumber\\
&\myoverset{}{=}{65} \frac{\rh-2}{\rh-1}W+(\rh-2)|\TA|+(2\rh-4)|\TB|\label{equation: Bound For Position Better}\\
&\myoverset{$\rh< 3$}{<}{65} \frac{W}{\rh-1}+(\rh-2)|\TA|+(2\rh-4)|\TB|\nonumber\\
&\myoverset{Def \ref{definition: Critical Requests} (v)}{\le}{65} \frac{W}{\rh-1}+\left((\rh-2)+(2\rh-4)\frac{4\rh^2-30\rh+50}{-8\rh^2+50\rh-66}\right)|\TA|\nonumber\\
&\myoverset{$1.9< \rh< 4.3$}{<}{65} \frac{W}{\rh-1}+|\TA|\nonumber\\
&\myoverset{}{=}{65} |\TA^+|\nonumber\\
&\myoverset{$\sgn{\TA^+}=\sgn{\TA}$}{=}{65} \sgn{\TA}\TA^+\nonumber.
\end{align}
Thus, we have 
\begin{equation}
|\pos{t_0^+}-\TA^+|=\sgn{\TA}(\TA^+-\pos{t_0^+})>0\label{equation: Bound For Position Useful}
\end{equation}
Using inequality (\ref{equation: Bound For Position Useful}) and plugging inequality (\ref{equation: Bound For Position Better}) into inequality (\ref{equation: Last Case TMID}) gives us
\begin{align}
\sgn{\TA}t_\text{mid}&\myoverset{}{=}{15} \frac{1}{3}(2\sgn{\TA}t_0^++2|\pos{t_0^+}-2\TA^+|+2\TA^+-3\TB)\nonumber\\
&\myoverset{(\ref{equation: Bound For Position Useful})}{=}{15} \frac{1}{3}(2\sgn{\TA}t_0^++2\sgn{\TA}\TA^+-2\sgn{\TA}\pos{t_0^+}+2\TA^+-3\TB)\nonumber\\
&\myoverset{}{=}{15} \frac{1}{3}\left(-7\TB+6\TA+\frac{(6\sgn{\TA})W}{\rh-1}-2\sgn{\TA}\pos{t_0^+}\right)\nonumber\\
&\myoverset{(\ref{equation: Bound For Position Better})}{=}{15} \frac{1}{3}\left(-(15-4\rh)\TB+(10-2\rh)\TA+\frac{(10-2\rh)\sgn{\TA}W}{\rh-1}\right)\label{equation: TMID Representation SGN}
\end{align}
Note that we also used $\sgn{\TA}=\sgn{\TA^+}=-\sgn{\TB}$.
Multiplying equality (\ref{equation: TMID Representation SGN}) with $\sgn{\TA}$ gives us
\begin{equation}
t_\text{mid}=\frac{1}{3}\left((15-4\rh)|\TB|+(10-2\rh)|\TA|+\frac{(10-2\rh)W}{\rh-1}\right).\label{equation: TMID Representation}
\end{equation}
By substituting (\ref{equation: TMID Representation}) into (\ref{equation: Last Case Comp Easier}) and noting that it is hardest to satisfy, when $W=0$, we get
\begin{equation*}
\frac{|\TA|}{|\TB|}\le \frac{4\rh^2-30\rh+50}{-8\rh^2+50\rh-66},
\end{equation*}
which is true due to \Cref{definition: Critical Requests} (v).
\end{proof}

\end{document}